\def\R{{\mathbb R}}
\def\N{{\mathbb N}}
\def\C{{\mathbb C}}
\def\Z{{\mathbb Z}}
\def\1{{1\!\!1}}
\def\E{{\mathbb E}}
\title{ Stationary analysis of the shortest queue problem}
\author[S.~Dester]{Plinio S.~Dester}
\address[Plinio S.~Dester]{INRIA Paris, 2 rue Simone Iff,  CS 42112, 75589 Paris Cedex 12, France}
\email{plinio.santini-dester@polytechnique.edu}
\author[Fricker]{Christine Fricker}
\address[Christine Fricker]{INRIA Paris, 2 rue Simone Iff,  CS 42112, 75589 Paris Cedex 12, France}
\email{christine.fricker@inria.fr}
\author[Tibi]{Danielle Tibi} 
\address[Danielle Tibi]{LPMA - Universit\'e Paris Diderot, b\^atiment Sophie Germain,  case courrier 7012,  8 place Aur\'elie Nemours, 75205 Paris cedex 13, France}
\email{tibi@math.univ-paris-diderot.fr}
\def\R{{\mathbb R}}
\def\N{{\mathbb N}}
\def\C{{\mathbb C}}
\def\Z{{\mathbb Z}}
\def\1{{1\!\!1}}
\def\E{{\mathbb E}}
\newtheorem{theorem}{Theorem}
\newtheorem{proposition}{Proposition}
\newtheorem{corollary}{Corollary}
\newtheorem{lemma}{Lemma}
\newtheorem{remark}{Remark}
\keywords{Shortest queue, Finite capacity, Stationary probabilities, Bivariate generating function}
\begin{document}



\footnote{\hspace{-4mm}\noindent\fbox{\parbox{\linewidth-2\fboxrule-2\fboxsep}{This is an electronic, extended version of the article published in 
      Queueing Systems, 87(3), 211-243 (2017). This reprint differs from original in pagination and typographic detail. It includes supplementary material, mainly,  the proof of Proposition 2, a sketch of proof of the result for the asymmetric model, and a simple alternative proof of a result by J.~W.~Cohen. In contrast, the Section entitled  {\em Application to large-scale analysis} is  not included.  }}}

      
\maketitle

\begin{abstract}
A simple analytical solution is proposed for the stationary loss  system of two parallel queues with finite capacity $K$, in which new customers join the shortest queue, or one of the two with equal probability if their lengths are equal. The arrival process is Poisson, service times at each queue have exponential distributions with the same parameter, and both queues have equal capacity. Using standard generating function arguments, a simple expression for the blocking probability is derived, which  as far as we know is original. 
Using coupling arguments and explicit formulas, comparisons with  related loss systems are then provided. Bounds  are similarly obtained for the average total number of customers, with the stationary distribution explicitly determined  on $\{K, \dots, 2K \}$, and elsewhere upper bounded.  
   Furthermore, from the balance equations,  all stationary probabilities are obtained as explicit  combinations of their values at states $(0,k)$ for $0 \le k \le K$.
  These  expressions extend to  the infinite capacity and   asymmetric cases, i.e.,  when the queues have different service rates.   For the initial symmetric  finite capacity model, the  stationary probabilities of states $(0,k)$ can be obtained recursively from the blocking probability. In the other cases, they are implicitly determined through a  functional equation that characterizes their generating function.  The whole approach shows that the stationary distribution of the infinite capacity symmetric   process   is the limit of the corresponding finite capacity distributions. 
For the infinite capacity symmetric model, we provide an elementary proof  of a result by Cohen which gives the solution of the functional equation in terms of    an  infinite product with explicit zeroes and poles.

\end{abstract}

\section{Introduction}\label{sec1} The \emph{join--the-shortest-queue} (JSQ) policy is  used for load-balancing purposes in stochastic networks. 
Yet for  systems involving such a mechanism, an  exact stationary  analysis  is far from trivial. The simplest model with two queues, known as  \emph{two queues in parallel} or \emph{join-the-shortest-queue model},  has itself given rise to an abundant literature. Most of it,  from the first paper by Haight \cite{haight1958two} to the complete solution by Flatto and MacKean \cite{Flatto-1} and by Cohen \cite{cohen1995two}, is devoted to  the  infinite capacity  model. The latter indeed raises  interesting issues of complex analysis.  The approach initiated by Kingman \cite{Kingman-1} is to characterize the invariant distribution through its bivariate generating function $F(x,y)$, which can be expressed in terms of the univariate functions $F(x,0)$ and $F(0,y)$.  Those are characterized via functional equations. Using complex variable arguments, \cite{Kingman-1}  proves that both functions have a meromorphic continuation in the complex plane, and then determines their  poles and residues. As a by-product, using partial fraction expansion, the stationary probabilities of states $(0,k)$ and $(k,k)$ for $k \in \N$ are  derived as infinite sums, involving these poles and residues.   Asymptotics are then derived for the  stationary probabilities  (in the limit of large states) and  for the  waiting time distribution at stationarity (in the heavy-traffic limit). In the same spirit,  \cite{Flatto-1} and \cite{cohen1995two} further obtain  different expressions for $F(x,0)$ and $F(0,y)$.  In  \cite{Flatto-1}, those  are formulated for $(x,y)$ on a particular Riemann surface,  using a uniformizing variable. Asymptotics are further provided  for the stationary probabilities,  improving those of \cite{Kingman-1}, and for the mean number of customers.   In \cite{cohen1995two}, both generating  functions are  represented as  infinite products, derived from their zeroes and poles  by using the Weierstrass factorization theorem. All stationary probabilities are then expressed as infinite sums involving the  poles and residues of $F(x,0)$ and $F(0,y)$,   generalizing the expressions in \cite{Kingman-1}.  Next, \cite{cohen1998analysis} extends Kingman's results to the model where the queues have two different service rates, determining the poles and residues of the meromorphic functions of interest.   The \emph {compensation approach}  (see \cite{adan1990analysissym,adan1991analysisasym,adan1993compensation}) produces explicit expressions for the stationary probabilities as infinite series of product forms. It more generally applies to a whole class of two-dimensional nearest-neighbor random walks. The structure of this series representation is suited for approximations  and numerical evaluation. For JSQ, it is shown in \cite{cohen1996symmetrical} that the series derived via the compensation method coincide with those given by the analytical approach.  
In \cite{fayolle1979two}, Fayolle and Iasnogorodski  characterize  the meromorphic continuations of $F(x,0)$ and $F(0,y)$
through a Riemann-Hilbert boundary value problem.   See also \cite[Chapter~10]{fayolle2017random} and the references therein. From this description, Kurkova and Suhov  \cite{Kurkova-1} obtain asymptotics for the stationary probabilities in a model where customers join the shortest queue only with some given probability.  For the same model, 
 the decay of stationary probabilities is analyzed via the matrix analytic method  by Li et al.~\cite{li2007geometric} 
  (see also references therein),   through the Markov additive approach by Foley and Mc Donald \cite{foley2001join}, 
   in heavy traffic \cite{turner2000join} and via large deviations  \cite{turner2000large} by Turner.
 In \cite{knessl1986two}, Knessl et al.~derive heuristics 
 from the balance equations.
Other heuristic approaches have been developed, aiming at  numerical results, like the \emph {power series algorithm} (see  Hooghiemstra et al.~\cite{Hooghiemstra1988Power}, Blanc \cite {blanc1992power}), that are quite accurate, but fail to have a theoretical justification. Similar computational procedures are proposed by  Rao and Posner \cite{rao1987algorithmic}.

Bounds on the stationary probabilities and mean total number of customers are derived by Halfin \cite{Halfin-1}. Van Houtum et al.~\cite{van1998bounds} obtain bounds via stochastic comparison of cost structures. 
 For the  JSQ model of $n$ queues, Winston \cite{winston1977optimality} proves that, among policies that immediately assign customers to a queue, JSQ is optimal for Poisson arrivals and exponential service times: It maximizes, in terms of stochastic order, the number of customers served in a given time interval. Weber \cite{weber1978optimal} extends this result to a more general class of service times with non-decreasing hazard rate and a general arrival process.  In \cite{whitt1986deciding}, Whitt  exhibits counterexamples for general service time distributions. For $n$ queues, large deviation results are obtained by Ridder and Schwartz \cite{ridder2005large} and Puhalskii and Vladimirov \cite{puhalskii2007large}. When the number of queues $n$ scales with the global arrival rate $\lambda_n$,  Eschenfeldt and Gamarnik \cite{eschenfeldt2015join} study the behavior in the Halfin-Whitt regime, i.e., $(1-\lambda_n)\sqrt{n}\rightarrow \beta$. See also references on the heavy-traffic regime therein.

 Variants have also been investigated, like jockeying, where customers can change queue during service (see, for example, \cite{adan1991analysis}), or the shortest-queue-first model, where  they join the queue with  minimum workload \cite{guillemin2014stationary}, or serve-the-longest-queue model, SLQ, where one server moves between two queues, joining the longest one when completing  a service \cite{flatto1989longer}.

The finite capacity version of the shortest queue problem was first investigated by Conolly \cite{Conolly-1} and then by Tarabia \cite{Tarabia-1} who uses matrix analysis methods. Both analyze  the steady-state probabilities with a view to giving numerical results.  Based on singular perturbation expansions  within the balance equations,  Knessl  and Yao  \cite{knessl2011finite}  derive asymptotics of the stationary probabilities as the capacity $K$ gets large, rescaling the state space into a size-one square. They obtain different behaviors according to different  regions of the  state space, and their results are, on the whole, numerically accurate even for small values of $K$.

The prime objective of the present paper is to derive simple expressions for the steady-state probabilities of the JSQ system with finite capacity $K$. 
Since some of the results remain valid for infinite capacities and  non-equal service rates, the scope has been extended to include those cases. This work is  motivated by the issue of approximating large real systems with a local choice strategy, such as vehicle-sharing networks,  as considered in \cite{Dester2017Questa}.
It is classical that for a system of $N$ identical one-server queues with Poisson arrivals, if customers join the shortest queue among two queues chosen at random, then   in the mean-field limiting regime of  large $N$,  the   stationary  number of customers per queue  falls from exponential to double-exponential decrease (\cite{mitzenmacher1997analysis,vvedenskaya1996queueing} and others). This demonstrates the so-called \emph{power of two choice}.   Similar balancing policies could be used to improve resource availability in bike-sharing systems. In this regard, a system where  users  return bikes to the less loaded of two stations chosen at random among all the stations has been studied in Fricker and Gast \cite{fricker2014incentives}. Obviously, only a local choice  policy  makes sense in practice, but the underlying dynamics are analytically intractable.
To circumvent the difficulty, these can be handled by clustering the network into groups of stations which can collaborate, say, for simplicity groups of two. Within this framework, mean-field limits involve the stationary mean number of customers of the typical object, that is, the JSQ model studied here.

The shortest queue problem is first considered for two one-server queues with  finite capacity $K$. A simple exact expression for the blocking probability, together with the probability that a particular queue is empty, is obtained by adapting Kingman's generating function method, here  using the functional equation for $F(0,y)$  at some specially chosen 
values.  This result  extends to the case of an additional constraint on the total number of customers admitted in the system.  The blocking probability  is  next quantitatively analyzed  and compared, on the one hand, to the loss probability  of two independent $M/M/1/K$ queues, and on the other hand, to that of one unique two-server queue with double capacity, i.e., an $M/M/2/2K$ queue.  The comparison is made through stochastic ordering 
 and  evaluating the uniform distance between blocking probabilities. Asymptotics are also derived in the different ranges of values of the parameters. The stationary number of customers in the system is then analyzed.  Part of its distribution is explicit, and its mean is given accurate bounds, notably involving the explicit blocking probability.  
 Independently of the blocking probability result, the balance equations are next solved via a recursive procedure involving discrete convolution products. All stationary probabilities are then obtained as explicit combinations of their particular  values at states  $(0,k)$ for $k=0, \dots ,K$. These expressions straightforwardly extend to the models with either  infinite capacity or  different service rates, thus providing a unified statement for  the finite/infinite or symmetric/asymmetric models. In the symmetric  case, this  makes it possible to prove  that the stationary distribution of the infinite capacity  process, when ergodic, is the limit of that of the finite capacity $K$ model, as $K$ goes to infinity. 
   The  unknown stationary probabilities  of states  $(0,k)$ --and $(k,0)$ for the asymmetrical case-- are characterized through functional equations. This alternative to the classical description of the stationary distribution through its  generating function has the advantage that only  $F(0,y)$ is involved. For the symmetric infinite capacity case, a short and  elementary proof of the infinite product representation of   $F(0,y)$ by Cohen \cite{cohen1995two} 
  is given.  This proof  essentially avoids the use of complex variable arguments by finding an obvious solution of the functional equation  and  then proving uniqueness under a condition of analyticity in some disk. 
   
   Let us mention that  the whole analysis can be adapted to the dual  \emph{serve-the-longest-queue} (SLQ) model. More precisely, when the  capacity $K$ of the queues is finite, the two models are derived from each other by exchanging occupied and vacant space in each queue. As a result, the stationary blocking probability of the JSQ model is equal to the stationary probability of idleness of the server  in the corresponding  SLQ model.

    The  JSQ  model is a particular example of a non-homogeneous \emph{quasi-birth-and-death} process. Our expressions for the stationary probabilities as linear combinations of those  of ``level zero'' amount to identifying 
 a set of rate matrices.  Those have  entries of both signs, which excludes probabilistic interpretations. 
  The method 
consists in solving separately a homogeneous subsystem of the balance equations. This is made possible, through convolution products, due to the absence of upward transitions inside some contour. This technique can extend to other models with no upward --or no downward-- one-step transitions, except on a boundary, such as those considered in \cite{van2009quasi}.
It constitutes  a simple alternative to the \emph{lattice path counting} procedure.   Note that for JSQ, the specific  positions of the upward jumps mean that the reordered queue-length process is \emph {successively lumpable}, as defined in  \cite{katehakis2012successive}. 
   Indeed, the larger component can increase to some level  $n$ only through the \emph{entrance state}  $(n-1,n)$. Yet, our analysis is entirely different from the successive lumping algorithm, since   we first solve the set of balance equations that \emph {do not} involve the one-step transitions toward the entrance states, while  those are involved  in building  accessory processes 
   at each stage of the successive lumping algorithm. 
   Moreover, it seems that explicit formulas for the stationary probabilities are out of reach for the latter approach.
         Note also that for JSQ with infinite capacity, there is no initial stage to begin the 
         algorithm. More generally, rate matrices for non-homogeneous QBD are usually characterized through a recursive scheme that is only solved numerically,  requiring space truncation arguments when the  set of levels is infinite (see \cite{katehakis2015DES,latouche}).  The present direct approach to the invariant measure of JSQ is thus original. It also differs radically from the compensation method. The latter indeed derives successive approximations  by a series of product form terms, while we obtain  \emph {finite} sums of \emph {non-product} terms.  Those are explicit, except for coefficients given by the stationary probabilities along the axes, that are characterized through either recursion, or a functional equation for their generating function. Regarding JSQ, as far as we know from the  literature, no previous work has thus addressed its different variants together with the same approach.


 
 The paper is organized as follows. Section 2 analyzes the symmetric finite capacity model, first focusing on the blocking probability, then on the mean total number of customers,  and next characterizing the whole invariant distribution. Section 3 deals with the symmetric infinite capacity case,    to which the last part of Section 2 is extended. Weak convergence of the finite capacity stationary distribution, as $K$ goes to infinity, is established under ergodicity of the infinite capacity process. The alternative proof to the result of  \cite{cohen1995two} is then provided. Finally, Section 4 states  similar  characterizations of the invariant distribution for the asymmetric, finite or infinite capacity models. 
 
 The symmetric model, which  is the one mainly considered, consists  of two one-server queues, each having capacity $K$ that may be finite or infinite. Service times at both queues are exponentially distributed with the same parameter that can be set equal to $1$   without loss of generality. Customers arrive according to a Poisson process with parameter $2 \rho$ and join the shortest queue, or  either queue  with probability $1/2$  if both are equal. If $K < \infty$, then when both files have length $K$, new  customers are rejected and definitively lost.

\section{The finite capacity model }\label{sec2}
Here, the queues have the same finite capacity $K$ ($K \in \N$). Therefore, when both queues have $K$ customers, any new arriving customer is definitively rejected from the system.  Denoting by $L_i(t)$, for $i=1,2$ and $t \ge 0$, the number of customers at queue  $i$ at time $t$,  the queue-length  process $(L_1(t), L_2(t))_{t \ge 0}$ is Markov with state space $\mathcal S _K = \{0, \dots , K \} ^2$. Its $Q$-matrix, denoted $Q_K$,  is characterized by the following jumps and rates, where $e_1 $ and $e _2$ are the units vectors in $\R^2$ :
\begin{itemize}
\item  for $v=(k,k)$,
\[ \begin{array}{l} Q_K(v,v+e_1) ~=~ Q_K(v, v+e_2) ~=~ \rho \, \1_{\{k<K\}} \\
Q_K(v,v-e_1) ~=~ Q_K(v, v-e_2) ~=~  \1_{\{k>0 \}} \end{array} \]
\item for  $v=(j,k)$ with $ j <k $, and $v' = (k,j)$,
 \[  \begin{array}{l}  Q_K(v,v+e_1) ~=~ 2 \rho ~=~ Q_K(v', v'+e_2) \\
 Q_K(v,v-e_1) ~=~  \1_{\{ j >0 \}}  ~=~ Q_K(v', v'-e_2) \\
 Q_K(v, v-e_2) ~=~  1 ~=~ Q_K(v',v'-e_1) \end{array}, \]
 \end{itemize}
  as represented in Figure~\ref{transitions} --where by symmetry,  the lower half-space is omitted.
  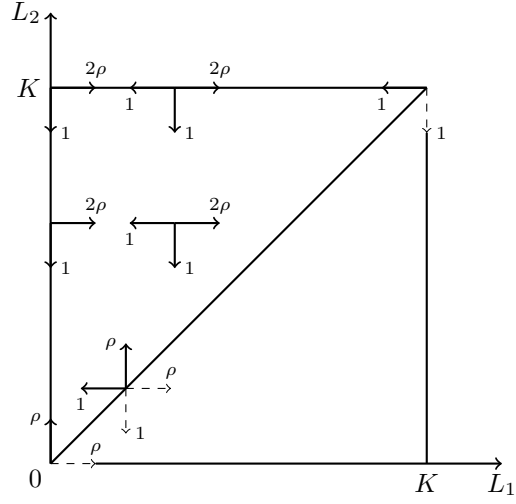
\begin{figure}[ht]
		\centering
		\begin{tikzpicture}
			\draw[->, thick]
			 (0.6,0) -- (6,0) node[below] {$L_1$}
			 ;
			\draw[->, thick]
			 (0,0) -- (0,6) node[left] {$L_2$};
	
			\node at (-.2,-.2) {$0$};

			\draw[-, thick] (0,0) -- (5,5);
			
			\draw[ thick]
			 (0,5) node[left] {$K$}-- (5,5) ;
			\draw[ thick]
			 (5,0) node[below] {$K$}-- (5,4.4) ;

			\draw[->, thick]
				(1,1) -- ++	(0,.6)	node[left]	{\scriptsize{$\rho$}};
			\draw[->, thick]
				(1.,1.) -- ++	(-.6,0)	node[below]	{\scriptsize{$1$}};
			\draw[->, dashed]
				(1.,1.) -- ++	(0,-.6)	node[right]	{\scriptsize{$1$}};
			\draw[->, dashed]
				(1.,1.) -- ++	(.6,0)	node[above]	{\scriptsize{$\rho$}};

			\draw[->, thick]
				(0,0) -- ++	(0,.6)	node[left]	{\scriptsize{$\rho$}};
		
			\draw[->, dashed]
				(0,0) -- ++	(.6,0)	node[above]	{\scriptsize{$\rho$}};

			
			\draw[->, thick]
				(5,5) -- ++	(-.6,0)	node[below]	{\scriptsize{$1$}};
			\draw[->, dashed]
				(5,5) -- ++	(0,-.6)	node[right]	{\scriptsize{$1$}};
			
		
			\draw[->, thick]
				(0,5) -- ++	(0,-.6)	node[right]	{\scriptsize{$1$}};
			\draw[->, thick]
				(0,5) -- ++	(0.6,0)	node[above]	{\scriptsize{$2\rho$}};

			
			\draw[->, thick]
				(1.65,3.2) -- ++	(-.6,0)	node[below]	{\scriptsize{$1$}};
			\draw[->, thick]
				(1.65,3.2) -- ++	(0,-.6)	node[right]	{\scriptsize{$1$}};
			\draw[->, thick]
				(1.65,3.2) -- ++	(0.6,0)	node[above]	{\scriptsize{$2\rho$}};

			\draw[->, thick]
				(1.65,5) -- ++	(-.6,0)	node[below]	{\scriptsize{$1$}};
			\draw[->, thick]
				(1.65,5) -- ++	(0,-.6)	node[right]	{\scriptsize{$1$}};
			\draw[->, thick]
				(1.65,5) -- ++	(0.6,0)	node[above]	{\scriptsize{$2\rho$}};

		
			\draw[->, thick]
				(0,3.2) -- ++	(0,-.6)	node[right]	{\scriptsize{$1$}};
			\draw[->, thick]
				(0,3.2) -- ++	(0.6,0)	node[above]	{\scriptsize{$2\rho$}};

		\end{tikzpicture}
		\caption{Transition rates of Markov process $(L_1,L_2)$}
\label{transitions}
	\end{figure}

 The process is clearly irreducible,  thus admitting a unique invariant probability distribution $\pi _K$, which is the object of interest in this section. By symmetry, 
 \[\pi_K(j,k) ~=~ \pi _K(k,j) \quad \text{ for } (j,k) \in \mathcal S_K,\] 
and the balance equations 
  are reduced to
 \begin{equation} \label{balance}    \begin{cases}  
   \big (\1 _{\{k>0\} } + \rho \, \1 _{ \{k<K \}}\big ) \,    \pi_K(k,k) ~=~     2\rho \, \1 _{ \{k>0 \}} \,   \pi_K(k-1,k) \\
 \hspace{35mm}  ~+~  \1 _{\{k<K \}} \,  \pi_K(k,k+1),\quad k= 0, \dots ,K, \medskip  \\  
 \left (\1 _{\{j>0\}} + 1 + 2\rho \right )    \pi_K(j,k) =     2\rho \, \1 _{\{j>0\}} \,   \pi_K(j-1,k)  + \pi_K(j+1,k)   \\ 
  \hspace{2mm}  ~+~  \1 _{\{k<K\}} \,  \pi_K(j,k+1)  ~+~ \rho \,  \1 _{\{k= j+1\}} \pi _K (j,j),\quad 0  \le j <k \le K.
      \end{cases}
        \end{equation}

   \subsection{Stationary blocking probability.}
   
   The classical approach to the invariant distribution for infinite $K$ (see \cite{cohen1995two,Flatto-1,Kingman-1}) is through the bivariate generating function  of the stationary queue-length vector. The same  method will  here be used for $K < \infty$,  leading to the determination of the stationary blocking probability $\pi_K(K,K)$.    
   Define for $x,y \in \C$,
   \begin{align*} F_K(x,y)~\stackrel{def}{=} ~\E \left (x^{L_1} y^{L_2-L_1} \1 _{\{L_1 \le L_2\}} \right ) 
   ~= \sum _{0 \le j \le k \le K} \pi_K(j,k) \, x^j y ^{k-j}  \, , 
   \end{align*} 
    where $(L_1,L_2)$ denotes the queue-length vector at stationarity.

   Proceeding as in \cite{Kingman-1}, one can convert the balance equations into a functional equation that characterizes $F_K$.   Note that, contrary to the infinite capacity case, $F_K$ is here defined for all complex values of $x$ and $y$. Apart from this notable difference,  the computation is similar to \cite{Kingman-1} and leads to the following relation:
   \begin{multline} \label{F_K}  \Big ( y^2 - 2(1+ \rho) xy + (1+2 \rho x) x  \Big) \, F_K(x,y)  ~=~ y(y-x) \, A_K(y)  \\
   ~-~ \Big (  \rho y^2+ (1+ \rho) y - 1-2 \rho x  \Big)  x B_K(x)  ~+~ \rho \,  x^{K+1} y(y-1) \, \pi_K(K,K), 
   \end{multline}
   where $A_K$ and $B_K$ are  univariate generating functions given by
   \[ A_K(y) = F_K(0,y) = \sum _{k=0}^K \pi_K(0,k) \, y^k \quad \text{ and } \quad B_K(x)= F_K(x,0)  =  \sum _{j=0}^K \pi_K(j,j) \, x^j. \]
   [Relation \eqref{F_K} is identical to the one derived by Kingman for $K= \infty$, except for the additional  term  here involving the blocking probability $\pi _K(K,K)$.]
   
   $F_K$ is thus determined as a function of $A_K$, $B_K$ and $\pi _K(K,K)$. We now use the standard argument  that whenever the left-hand side vanishes, the right-hand side must also vanish. In particular, 
      \[  y(y-x)  A_K(y) ~-~ \Big (  \rho y^2+ (1+ \rho) y - 1-2 \rho x  \Big)  x B_K(x)  ~+~ \rho   x^{K+1} y(y-1)  \pi_K(K,K)=0 
\]
 for all $x,y \in \C$ such that  $ p_x(y)=0$, 
where $p_x$ is defined  for $x \in \C$ by
 \begin{equation} \label{p_x}  p_x(Y) ~\stackrel{def}{=}~Y^2 - 2(1+ \rho) xY + (1+2 \rho x) x .
 \end{equation}
 
  Now, for fixed $x$, denote by $y$ and $z$ the two (possibly equal) roots  of   $ p_x$. Eliminating $B_K(x)$ within both relations  obtained for couples $(x,y)$ and $(x,z)$   yields
\begin{multline*}  \Big  ( \rho z^2+ (1+ \rho) z - 1-2 \rho x \Big) \Big ( y(y-x) \, A_K(y)   + \rho \,  x^{K+1} y(y-1) \, \pi_K(K,K) \Big ) =  \\
  \Big (\rho y^2+ (1+ \rho) y - 1-2 \rho x  \Big )  \Big ( z(z-x) \, A_K(z)   +\rho \,  x^{K+1} z(z-1) \, \pi_K(K,K) \Big ).
  \end{multline*}
  Using $p_x(y)= 0$, we get  $ \rho y^2+ (1+ \rho) y - 1-2 \rho x = (1+2 \rho x) \big ((1+ \rho )y -(1+ \rho x) \big )$ and the analogue with $z$ in place of $y$, so that for $1 +  2 \rho  x \neq 0$, 
    \begin{multline} \label{A_K_step}
     \Big ((1+ \rho )z -(1+ \rho x)  \Big ) \Big ( y(y-x) \, A_K(y)   + \rho \,  x^{K+1} y(y-1) \, \pi_K(K,K) \Big ) =   \\
       \Big ( (1+ \rho )y -(1+ \rho x) \Big )  \Big ( z(z-x) \, A_K(z)   +\rho \,  x^{K+1} z(z-1) \, \pi_K(K,K) \Big ).
  \end{multline} 
   Next, on the one hand,  we use $y+z= 2(1+ \rho)x$ and again  $p_x(z)=0$ to get
  \[ (y-x)\big ((1+ \rho )z -(1+ \rho x) \big )~=~ \big ((1+ 2\rho )x - z \big )\big ((1+ \rho )z -(1+ \rho x) \big ) ~=~ (x-1) (\rho x-z), \]
and the same with $y$ and $z$ exchanged.  On the other hand, from  relations $yz = (1+2 \rho x)x$ and $y+z= 2(1+ \rho)x$,  we derive 
\[ z(z-1)\big ((1+ \rho )y -(1+ \rho x) \big ) - y(y-1)\big ((1+ \rho )z -(1+ \rho x) \big )~=~ (x-1)(y-z). \] 
Equation \eqref {A_K_step} then yields, for $x \neq  1$ and $1 +  2 \rho  x \neq 0$,
\[y A_K(y) (\rho x-z) -z A_K(z) (\rho x-y) ~=~ \rho \,  x^{K+1}  (y-z) \, \pi_K(K,K),\] 
or equivalently, multiplying by $2(1+ \rho)$ and using again $2(1+ \rho)x = y+z$,
   \[  \big (\rho y-(2+ \rho)z \big )   y  A_K(y) -  \big (  \rho z-(2+ \rho)y \big )    z  A_K(z)   = 2 \rho (1+ \rho) x^{K+1} (y-z)  \, \pi_K(K,K) .\]
   This equation is analogous to the one derived in \cite{Kingman-1} for $K = \infty$, in which case the right-hand side is zero. One can go one step  further, noting that, 
   \[ y  \big (\rho y-(2+ \rho)z \big )  = 2 \rho  x \left  (\rho y-z - \frac{1+ \rho}{ \rho} \right ), \] 
 [using again $p_x(y)=0$, together with $yz = (1+2 \rho x)x$].   We finally get that
 \begin{equation} \label{A_K}
 \phi (y,z) A_K(y) - \phi(z,y) A_K(z) ~=~  (1+ \rho) \,  x^{K}  (y-z) \, \pi_K(K,K)
 \end{equation}
 for all $x \in \C \setminus \{0, 1, -(2 \rho)  ^{-1} \}$, where $y,z$ are the roots of $p_x$ and 
  \begin{equation} \label{phi}    \phi (y,z) ~=~ \rho y-z - \frac{1+ \rho}{ \rho},  \qquad y,z \in \C.
  \end{equation} 
  Now from continuity of the set of roots of a polynomial with respect to its coefficients, equation \eqref{A_K} extends to all complex values of $x$.

    Using  equation \eqref{A_K},  the computation of $\pi_K(K,K)$ will be  made possible thanks to a  particularly nice property. Indeed, for two special  values of $x$, the two associated pairs of roots  have a common element $y$, which is coupled, on one side, with some  $z$ such that $\phi(z,y)=0$, and on the other side with $1$, at which $A_K$ can be independently evaluated.
  
  \begin{theorem} \label{blocking} For $\rho >0$, 
  \[ \pi_K(K,K) =  \left \{ \begin{array} {ll} \displaystyle{\frac {(1- \rho)(2 - \rho )}{ \rho ^{-2K} +(1- \rho)(2 \rho )^{-K} - \rho(2- \rho)}} & \  \text{for } \   \rho \notin \{ 1,2 \}  \medskip \\
  \smallskip
 \left ( 2K + 2^{-K} \right ) ^{-1} &\   \text{for } \  \rho  =1\\
\left ( 2 - (K+2)2^{-(2K+1)} \right ) ^{-1}  &\   \text{for } \  \rho  =2,
\end{array} \right . \]
or equivalently,
\begin{equation} \label{pi(K,K)} \pi _K(K,K) =  \frac {2 \rho ^{2K}} {2\sum _{k=0}^{2K} \rho ^k - \sum _{k=0}^{K-1} (\rho/2) ^k} \cdot  
\end{equation} 
\end{theorem}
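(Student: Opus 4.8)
The plan is to exploit equation \eqref{A_K} by choosing the free parameter $x$ so that the relation degenerates in a useful way. Recall that for each $x$, the roots $y,z$ of $p_x$ satisfy $y+z = 2(1+\rho)x$ and $yz = (1+2\rho x)x$, and $\phi(y,z) = \rho y - z - (1+\rho)/\rho$. The key observation (hinted just before the statement) is that the two equations
\[ \phi(y,z)\,A_K(y) - \phi(z,y)\,A_K(z) = (1+\rho)\,x^K (y-z)\,\pi_K(K,K) \]
simplify if one can pick two values of $x$ sharing a common root $y$, with the partner root being $z$ in one case (chosen so that $\phi(z,y)=0$, killing the $A_K(z)$ term) and being $1$ in the other case (at which $A_K$ is computable: $A_K(1) = F_K(0,1) = \P(L_1=0)$, or more precisely $\sum_k \pi_K(0,k)$, the probability that queue~1 is empty). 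First I would determine, for a given candidate root $y$, what $x$ must be: from $p_x(y)=0$, namely $y^2 - 2(1+\rho)xy + (1+2\rho x)x = 0$, one solves for $x$ in terms of $y$ (a quadratic in $x$), and the other root $z$ is then $z = 2(1+\rho)x - y$.

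Next I would pin down the ``nice'' value of $y$. Imposing $\phi(z,y) = \rho z - y - (1+\rho)/\rho = 0$ together with $z = 2(1+\rho)x - y$ and $p_x(y)=0$ gives a system whose solution should produce $y$, $x$, $z$ as explicit rational expressions in $\rho$ — I expect something like $y$ a simple function of $\rho$ and the associated $x$ turning out to be $\rho$ (or $1/(2\rho)$, $1/\rho$, etc.), which is exactly why the factor $x^K$ becomes a clean power of $\rho$. Call this first value $x_1$ with roots $(y_0, z_0)$, $\phi(z_0,y_0)=0$. Then I would find the second value $x_2$ whose root set is $(y_0, 1)$: since $y_0 \cdot 1 = (1+2\rho x_2)x_2$ and $y_0 + 1 = 2(1+\rho)x_2$, these two conditions must be consistent (they over-determine $x_2$ unless $y_0$ was chosen correctly — this consistency is presumably what fixes $y_0$ in the first place, making the two requirements coincide). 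Writing \eqref{A_K} at $x_1$ and at $x_2$ gives two linear equations in the unknowns $A_K(y_0)$, $A_K(1)$, $A_K(z_0)$ (the last one dropping out at $x_1$), and $\pi_K(K,K)$. Eliminating $A_K(y_0)$ between them leaves a single relation between $A_K(1)$ and $\pi_K(K,K)$.

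To close the system I still need one more equation. The natural source is normalization together with the balance equations: summing the first family of balance equations in \eqref{balance} over $k$ telescopes, and one can relate $A_K(1) = \sum_k \pi_K(0,k)$, $B_K(1) = \sum_k \pi_K(k,k)$, and $\pi_K(K,K)$; alternatively, evaluating the functional equation \eqref{F_K} or its consequences at well-chosen points (e.g. $x=y=1$, where the polynomial coefficient $y^2 - 2(1+\rho)xy + (1+2\rho x)x$ becomes $1 - 2(1+\rho) + 1 + 2\rho = 0$, and $F_K(1,1)=1$) yields a relation among $A_K(1)$, $B_K(1)$, $\pi_K(K,K)$. Combining this with the eliminated relation above should determine $\pi_K(K,K)$ outright. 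Then I would simplify the resulting rational expression in $\rho$ and $K$, recognizing $\rho^{-2K} + (1-\rho)(2\rho)^{-K} - \rho(2-\rho)$ in the denominator, and separately treat the removable singularities at $\rho=1$ and $\rho=2$ by L'Hôpital or by direct substitution in the alternative form \eqref{pi(K,K)}; checking that \eqref{pi(K,K)}, with its geometric sums $2\sum_{k=0}^{2K}\rho^k - \sum_{k=0}^{K-1}(\rho/2)^k$, agrees with the case split is a routine algebraic verification.

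The main obstacle I anticipate is the bookkeeping in identifying the two special values $x_1, x_2$ and verifying that $y_0$ can be chosen to simultaneously satisfy $\phi(z_0,y_0)=0$ at $x_1$ and lie in the root set $\{y_0,1\}$ at $x_2$ — i.e. checking that the degeneracy the authors rely on genuinely occurs, and handling the algebra of the roots (which involves square roots of a discriminant in $\rho$) cleanly enough that $x_1^K, x_2^K$ come out as the stated powers of $\rho$ and $2\rho$. Everything after that — solving a small linear system and simplifying — is mechanical, modulo care with the cases $\rho \in \{1, 2\}$ where denominators vanish and with the excluded values $x \in \{0, 1, -(2\rho)^{-1}\}$ (which, by the continuity remark following \eqref{A_K}, do not actually obstruct anything).
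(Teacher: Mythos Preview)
Your plan is essentially the paper's proof: the common root is $y_0 = 1/\rho$, with $x_1 = 1/\rho^2$ (partner root $z_0 = (2+\rho)/\rho^2$, satisfying $\phi(z_0,y_0)=0$) and $x_2 = 1/(2\rho)$ (partner root $1$), and the closing identity is $A_K(1) = 1 - \rho\,(1-\pi_K(K,K))$, obtained from three relations at $x=y=1$ --- note that $F_K(1,1)\neq 1$ as you wrote, but rather $2F_K(1,1)-B_K(1)=1$ by symmetry. Everything else proceeds exactly as you describe.
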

  
  \begin{proof}
For  $x= 1/ \rho ^2$,  the  roots of  $p_x$   are $y =1/ \rho  $ and $z=(2+\rho)/\rho ^2$, which satisfy $\phi (z,y) = 0$.   Relation \eqref{A_K} then yields
 \begin{equation} \label {A_K(1/rho)} A_K(1/ \rho)  ~=~\rho ^{-2K} \pi_K(K,K). 
 \end{equation}

  Now, $1/ \rho $ is also a root  of $p_x$ for $x= 1/2\rho $,   the other root  here being $1$.  We then get, from  \eqref{A_K}, 
  \[(2- \rho ) A_K(1) - A_K(1/ \rho)  ~=~ (1- \rho) (2 \rho) ^{-K}  \pi_K(K,K). \] 
  By summing both relations,  we have that
  \begin{equation} \label {firstA_K(1)} (2- \rho ) A_K(1)  ~=~ \left ( \rho ^{-2K} + (1- \rho) (2 \rho) ^{-K} \right ) \pi_K(K,K). 
  \end{equation}
  Another expression of $A_K(1)$ can be obtained, using  the relations 
   \begin{equation} \label{generating}
    2 F_K(x,y) - B_K(x) ~=~ \E \left (x^{\min (L_1,L_2)} y^{\max (L_1, L_2)-\min( L_1, L_2)} \right )   
   \end{equation}
   for  $x,y \in \C$, that result from symmetry, and
\[ F_K(x,x) ~=~ (1+ \rho x) B_K(x)  - \rho \,  x^{K+1} \pi _K(K,K),  \quad x \in \C , 
\]
 together with
 \[ (1-2 \rho x)\,  F_K(x,1)  ~=~  A_K(1) -2 \rho x B_K(x),  \quad x \in \C , \]
 that result from  \eqref{F_K}.  
 In particular, for $x=y =1$, one gets  
  \[ 2 F_K(1,1) - B_K(1) =   1, \] 
   \[F_K(1,1) ~=~ (1+ \rho ) B_K(1)  - \rho \pi _K(K,K) \]
 and
 \[ (1-2 \rho )\,  F_K(1,1)  ~=~  A_K(1) -2 \rho  B_K(1). \]  
 From the last three relations, one easily derives  that 
  \begin{equation} \label {A_K(1)}
  A_K(1) = 1- \rho \,  \big (1 - \pi _K (K,K) \big ), 
  \end{equation} 
 which, together with  relation \eqref{firstA_K(1)},  yields   
   \[ \big ( \rho ^{-2K} +(1- \rho)(2 \rho )^{-K} - \rho(2- \rho) \big ) \,  \pi_K(K,K)  ~=~ (1- \rho)(2 - \rho ) .\]
   Since $\pi _K(K,K)$ is defined for all positive values of  $\rho$, the first factor in the left-hand side must vanish only at $\rho =1$ or $2$.  
   The expression of $\pi_K(K,K)$ is thus determined for $\rho \notin  \{ 1,2 \}$. Values at $1$ and $2$ are obtained by taking limits, using continuity of $\pi_K(K,K)$ with respect to $\rho >0$. Indeed, $ ( \pi_K(j,k), 0 \le j,k \le K)$ is continuous with respect to $\rho >0$, as the unique solution of a system  of linear equations with continuous coefficients, which consists of the balance equations together with $ \sum _{j,k} \pi_K(j,k)    =1$. 
   
   The alternate expression  of $\pi_K(K,K)$, valid for all  $\rho$, is  derived  by writing
\begin{align*}
  \rho ^{-2K} +(1- \rho)(2 \rho )^{-K} - \rho(2- \rho) =   \rho ^{-2K}  \Big (1+(1- \rho) (\rho /2)^K \Big ) -  \rho(2- \rho)  \\
  =  \rho ^{-2K}  \Big (1 - (\rho /2)^K  +(2- \rho) (\rho /2)^K \Big ) -  \rho(2- \rho)  \\ 
  =    (2- \rho)   \rho ^{-2K} \left ((\rho /2)^K - \rho ^{2K+1}   + \frac{1}{2} \sum _{k=0}^{K-1} (\rho/2) ^k  \right )  \\=   (2- \rho)   \rho ^{-2K} \left (  ( \rho /2 -1) \sum _{k=0}^{K-1} (\rho/2) ^k    - (\rho -1)  \sum _{k=0}^{2K} \rho ^k     +  \frac{1}{2} \sum _{k=0}^{K-1} (\rho/2) ^k \right )  \\
  =   (1- \rho)  (2- \rho)   \rho ^{-2K} \left (  \sum _{k=0}^{2K} \rho ^k  - \frac{1} {2}\sum _{k=0}^{K-1} (\rho/2)^k  \right ).   
   \end{align*}

  \end{proof} 
   
 \begin{remark} \label{empty}
   Note that  the proof of Theorem \ref{blocking} additionally provides, through equation \eqref{A_K(1)}, the stationary probability $A_K(1)$ that queue $1$ (resp.~$2$)  is empty. 
   \end{remark}

 \begin{remark} \label{degree_two}
  The  property, here used for $y= 1/ \rho$,  that equation $p_x(y)=0$ has  in general two solutions   $x\in \C$ for given $y$ --because $p_x(y)$ is a degree-two polynomial with respect to $x$-- will be used in the next section for building infinite chains of  coupled roots. 
   \end{remark}
   
   We can extend our result to a system where there is a constraint on the total number  of customers in the system, rather than separate constraints for each queue. To solve both cases of even and odd total capacity, we need to consider the following variant of our original model. Here again the queues have capacity $K$, but the system cannot accept more than $2K-1$ customers.  The state space is thus reduced to $\{0, \dots , K \} ^2 \setminus \{ (K,K) \}$ and the transitions and rates are the same as previously, except that those  from  $(K-1,K)$ and $(K,K-1)$ to $(K,K)$ and vice versa no longer exist. Denoting  $\widetilde \pi _K$ the stationary distribution, the stationary blocking probability is given by $\widetilde \pi _K (K-1,K) +  \widetilde \pi _K (K,K-1)  = 2 \, \widetilde \pi _K (K-1,K) $ and can be determined by following  the same steps as for $\pi_K(K,K)$. The result is given  in the next theorem.

  \begin{theorem} \label{tilde_blocking} For $\rho >0$, 
\begin{multline*} \label{tilde_pi(K,K)}  2\, \widetilde \pi _K (K-1,K)  =  \frac {2 \rho ^{2K-1}} {2\sum _{k=0}^{2K-1} \rho ^k - \sum _{k=0}^{K-1} (\rho/2) ^k}  \\
=  \frac {(1- \rho)(2 - \rho )}{  \rho ^{-(2K-1)} + \rho (1- \rho)(2 \rho )^{-K} - \rho (2 - \rho)} \quad \text{if }\ \rho \ne 1,2.   
\end{multline*} 
\end{theorem}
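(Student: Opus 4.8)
The plan is to re-run the proof of Theorem~\ref{blocking} for the constrained chain. First set up the generating function $\widetilde F_K(x,y)=\E\!\left(x^{L_1}y^{L_2-L_1}\1_{\{L_1\le L_2\}}\right)$ for the stationary vector $\widetilde\pi_K$, together with $\widetilde A_K(y)=\widetilde F_K(0,y)=\sum_{k}\widetilde\pi_K(0,k)y^k$ and $\widetilde B_K(x)=\widetilde F_K(x,0)=\sum_{j}\widetilde\pi_K(j,j)x^j$ (note $\widetilde B_K$ now has degree $K-1$, since $(K,K)$ is excluded). The balance equations of the constrained chain coincide with \eqref{balance} at every state, except at the transition states $(K-1,K)$ and $(K,K-1)$, where the $2\rho$ arrival rate is suppressed, and at the vanished state $(K,K)$. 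Feeding these modified equations into Kingman's functional-equation computation produces a relation of the form \eqref{F_K} in which the blocking term $\rho x^{K+1}y(y-1)\pi_K(K,K)$ is replaced by a boundary term carrying $\widetilde\pi_K(K-1,K)$; propagating this through the elimination of $\widetilde B_K(x)$ between the two roots $y,z$ of $p_x$ and through the same simplifications as in Theorem~\ref{blocking} should lead to the analogue of \eqref{A_K},
\[ \phi(y,z)\,\widetilde A_K(y)-\phi(z,y)\,\widetilde A_K(z) ~=~ 2\rho(1+\rho)\,x^{K}(y-z)\,\widetilde\pi_K(K-1,K), \]
valid for all $x\in\C$ (again by continuity of the roots of $p_x$ in its coefficients).

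Next, evaluate at the same two distinguished values. At $x=1/\rho^2$, where the roots are $y=1/\rho$ and $z=(2+\rho)/\rho^2$ with $\phi(z,y)=0$, this gives $\widetilde A_K(1/\rho)=\rho^{-(2K-1)}\cdot 2\widetilde\pi_K(K-1,K)$; at $x=1/(2\rho)$, where the roots are $1/\rho$ and $1$, it gives $(2-\rho)\widetilde A_K(1)-\widetilde A_K(1/\rho)=\rho(1-\rho)(2\rho)^{-K}\cdot 2\widetilde\pi_K(K-1,K)$, so that by addition
\[ (2-\rho)\,\widetilde A_K(1) ~=~ \bigl(\rho^{-(2K-1)}+\rho(1-\rho)(2\rho)^{-K}\bigr)\,2\widetilde\pi_K(K-1,K). \]
A second expression for $\widetilde A_K(1)$, the probability that a given queue is empty, follows either from flow balance at that queue — its departure rate $1-\widetilde A_K(1)$ equals its admitted input rate $\rho\bigl(1-2\widetilde\pi_K(K-1,K)\bigr)$ — or, as in Theorem~\ref{blocking}, from the constrained-model analogues of \eqref{generating} and of $F_K(x,x)=(1+\rho x)B_K(x)-\dots$, $(1-2\rho x)F_K(x,1)=\dots$ evaluated at $x=y=1$; either way $\widetilde A_K(1)=1-\rho+2\rho\widetilde\pi_K(K-1,K)$. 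Substituting this into the previous display and solving the resulting linear equation yields
\[ \bigl(\rho^{-(2K-1)}+\rho(1-\rho)(2\rho)^{-K}-\rho(2-\rho)\bigr)\,2\widetilde\pi_K(K-1,K) ~=~ (1-\rho)(2-\rho), \]
which is the claimed value for $\rho\notin\{1,2\}$. The values at $\rho=1$ and $\rho=2$ are obtained as limits, using continuity of $\bigl(\widetilde\pi_K(j,k)\bigr)$ in $\rho>0$ (it solves a linear system with continuous coefficients), and the equivalent finite-sum expression follows from the same factorization as at the end of the proof of Theorem~\ref{blocking}, now with $\sum_{k=0}^{2K}\rho^k$ replaced by $\sum_{k=0}^{2K-1}\rho^k$.

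The main obstacle is the careful re-derivation of the functional equation. In Theorem~\ref{blocking} the quantity $\pi_K(K,K)$ plays two roles simultaneously — it is both the coefficient appearing in \eqref{F_K} and the blocking probability — and here these decouple: one must track precisely how the modified equations at $(K-1,K)$, $(K,K-1)$ and the suppression of $(K,K)$ reshape the boundary term of the functional equation (and keep in mind that $\widetilde B_K$ has one fewer coefficient), so that the coefficient in the analogue of \eqref{A_K} comes out as $2\rho(1+\rho)x^K$ while the emptiness probability $\widetilde A_K(1)$ involves the blocking probability $2\widetilde\pi_K(K-1,K)$ directly. Once the correct analogue of \eqref{A_K} and the correct value of $\widetilde A_K(1)$ are established, the remaining computations are a routine repetition of those in Theorem~\ref{blocking}.
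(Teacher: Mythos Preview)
Your proposal is correct and follows essentially the same route as the paper: it sets up $\widetilde F_K,\widetilde A_K,\widetilde B_K$, obtains the analogue of \eqref{A_K} with the right-hand side $2\rho(1+\rho)x^K(y-z)\widetilde\pi_K(K-1,K)$ (exactly as the paper states), evaluates at $x=1/\rho^2$ and $x=1/(2\rho)$, and combines with $\widetilde A_K(1)=1-\rho+2\rho\,\widetilde\pi_K(K-1,K)$. Your flow-balance derivation of $\widetilde A_K(1)$ is a pleasant shortcut over redoing the three auxiliary identities from Theorem~\ref{blocking}, but otherwise the argument is the same.
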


\begin{proof}
It is similar to that of Theorem \ref{blocking}.  Defining for $x,y \in \C$, 
\[ \widetilde F_K(x,y)~\stackrel{def}{=}~ 
    \sum _{ 0 \le j \le k \le K,  \, (j,k) \neq (K,K) } \widetilde \pi_K(j,k) \, x^j y ^{k-j}  \] 
    \[ \widetilde A_K(y) =  \sum _{k=0}^K \widetilde \pi_K(0,k) \, y^k \quad \text{ and } \quad \widetilde B_K(x) =  \sum _{j=0}^{K-1}\widetilde  \pi_K(j,j) \, x^j, \]
 we prove the following relation, analogous to \eqref{F_K}: For  $x,y \in \C$,
 \begin{multline*}   \Big ( y^2 - 2(1+ \rho) xy + (1+2 \rho x) x  \Big) \, \widetilde F_K(x,y)  ~=~ y(y-x) \, \widetilde A_K(y)  \\
   ~-~ \Big (  \rho y^2+ (1+ \rho) y - 1-2 \rho x  \Big)  x \widetilde B_K(x)  ~+~ 2 \rho \,  x^{K} y(x-y) \, \widetilde \pi_K(K-1,K), 
   \end{multline*}
    from which we derive that, if  $y,z$ are the  roots  of any polynomial $p_x$, then
    \[
 \phi (y,z) \widetilde A_K(y) - \phi(z,y) \widetilde A_K(z) ~=~ 2 \rho \,  (1+ \rho) \,  x^{K}  (y-z) \, \widetilde \pi_K(K-1,K).
 \]
 The theorem then follows by using the same particular values for $x$ as in the proof of Theorem \ref{blocking}.

\end{proof}

\medskip
From Theorems \ref{blocking} and \ref{tilde_blocking}, one can derive the stationary blocking probability of  a system of two identical $M/M/1$ queues under JSQ, but  with the constraint that the total number of customers in the system cannot exceed some value $M $. Indeed, since under JSQ the maximum of the two queues can increase only when both are equal,  the associate queue-length Markov process is \emph {not irreducible}: Defining
\[ S'_M \stackrel{def}{=} \left \{ \begin{array}{lll}  \{0, \dots , M/2 \}^2 & \text { if } M \text { is even,} \\
 & \\
\{0, \dots , \frac{M+1}{2} \}^2 \setminus \{ \left (\frac{M+1}{2}, \frac{M+1}{2} \right) \} & \text { if } M \text { is odd,}
\end{array} \right .
\]
then for all $M$, the set $S'_{M}$ is closed under the dynamics and the process eventually ends its life in this set.  Once in $S'_{M}$, the process behaves as the standard JSQ for even $M$, and as the variant in Theorem  \ref{tilde_blocking} for odd $M$. Since  in both cases, $S'_{M}$ is the only absorbing irreducible component, the process has a unique stationary distribution given by that of JSQ (or its variant) in $S '_{M}$. The stationary blocking probability is then given by $\pi _{M/2}(M/2,M/2)$ if $M$ is even, and by  $2 \,  \widetilde \pi _{\frac{M+1}{2} }\left (\frac{M-1}{2} , \frac{M+1}{2} \right )$ if $M$ is odd. 

Note  that the same result holds for a system of two identical queues with a double constraint $\max(L_1,L_2) \le K$ and $L_1 + L_2 \le M$, where $M < 2K$. The system eventually ends in the set $S'_M$,
and has the same steady state as under
 the sole constraint  $L_1 + L_2 \le M$.

 \subsection{Asymptotics of the blocking probability and comparison with related models.} \label{subsec2.2}
  
  Asymptotics of the blocking probabilities are given by Proposition \ref{asymptotics}. The JSQ system is then compared with both systems of two independent  $M/M/1/K$ queues and  one two-server $M/M/2/2K$ queue.  Here all servers have rate $1$, each one-server queue has arrival rate $\rho$, while the two-server queue has arrival rate  $2 \rho$. Such a comparison is natural. Indeed, the first system of two independent queues  corresponds to two queues without cooperation.    Its blocking probability is the common blocking probability of the two queues. (One may indeed consider that customers arrive according to a global Poisson flow with parameter $2 \rho$ and  choose either of the two queues with equal probability.)  On the contrary, the second system   is the case of fully shared resources: The two servers serve the total flow of customers. 
   A stochastic ordering is established in Proposition \ref{couplings}. It yields inequalities between the different blocking probabilities, whose differences are, moreover, controlled by Proposition \ref{uniform_bounds}.

 \begin{proposition} \label{asymptotics}
The following  asymptotics hold.
For fixed $K$,
  \begin{align*}
   \pi_K(K,K)=\begin{cases}
    2 \rho ^{2K} \! +  O\! \left (\rho ^{2K+1} \right ) & \text{ as } \rho \to 0,\\
1 - 1/ \rho   +  O  \left (\rho ^{-(K+1)} \right )& \text{ as } \rho \to + \infty .
\end{cases}
\end{align*}
For fixed $\rho$, as $K$ tends to $+\infty$,
 \begin{align*}
   \pi_K(K,K)=\begin{cases}
      \rho ^{2K} (1- \rho)(2 - \rho)+ O \big ((\rho ^3/2)^K \big ) &  \text{ if } \rho < 1/2,\\
      \rho ^{2K} (1- \rho)(2 - \rho)+ O \big (\rho ^{4K} \big ) &  \text{ if } 1/2 \le \rho <1,\\
      (2K) ^{-1} + O \big ( K^{-2}\,  2 ^{-K}  \big ) &  \text{ if } \rho =1,\\
     1- 1/\rho+  O \big (\rho ^{-2K}\big )  & \text{ if } 1 <\rho < 2, \\
      1/2  + O \big (K \,  2 ^{-2K}  \big ) &  \text{ if } \rho =2,\\
1- 1/\rho +  O  \big  ((2\rho) ^{-K}  \big  ) & \text{ if } \rho > 2.   \\
 \end{cases}
\end{align*}   
\end{proposition}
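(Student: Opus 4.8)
The plan is to read everything off from the two closed forms for $\pi_K(K,K)$ given in Theorem~\ref{blocking} and from the special values at $\rho=1$ and $\rho=2$; the argument is pure asymptotic expansion of an explicit expression that is rational in $\rho^{K}$, so the only thing requiring thought is which representation to use and, within each regime, which of the two ``small'' terms dominates.

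\textbf{Fixed $K$.} For $\rho\to 0$ I would start from the symmetric form \eqref{pi(K,K)}: the denominator $2\sum_{k=0}^{2K}\rho^k-\sum_{k=0}^{K-1}(\rho/2)^k$ equals $1+\tfrac32\rho+O(\rho^2)$, so $\pi_K(K,K)=2\rho^{2K}\big(1+O(\rho)\big)=2\rho^{2K}+O(\rho^{2K+1})$. For $\rho\to+\infty$ I would switch to the second form of Theorem~\ref{blocking} and, dividing numerator and denominator by $\rho(\rho-2)$, rewrite it as
\[
\pi_K(K,K)=\frac{1-1/\rho}{\,1+\varepsilon\,},\qquad
\varepsilon:=\frac{\rho^{-2K}+(1-\rho)(2\rho)^{-K}}{\rho(\rho-2)}.
\]
Since $(1-\rho)(2\rho)^{-K}$ is of order $\rho^{\,1-K}$ and dominates $\rho^{-2K}$, while the whole fraction is divided by $\rho(\rho-2)=O(\rho^2)$, one gets $\varepsilon=O(\rho^{-(K+1)})$; expanding $(1+\varepsilon)^{-1}$ yields $\pi_K(K,K)=1-1/\rho+O(\rho^{-(K+1)})$.

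\textbf{Fixed $\rho$, $K\to\infty$.} For $\rho<1$ the clean move is to factor $\rho^{-2K}$ out of the denominator of the second form and use $(2\rho)^{-K}\rho^{2K}=(\rho/2)^K$, obtaining
\[
\pi_K(K,K)=\frac{(1-\rho)(2-\rho)\,\rho^{2K}}{1+(1-\rho)(\rho/2)^K-\rho(2-\rho)\rho^{2K}}.
\]
Here both $(\rho/2)^K$ and $\rho^{2K}$ tend to $0$, so expanding the denominator gives $\pi_K(K,K)=(1-\rho)(2-\rho)\rho^{2K}+O\big((\rho^3/2)^K\big)+O(\rho^{4K})$; the ratio of the two error terms is $(2\rho)^K$, hence the first error dominates for $\rho<1/2$ and the second for $1/2\le\rho<1$, which produces the first two lines of the list. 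For $1<\rho<2$ and for $\rho>2$ I would keep the original second form: the denominator tends to the nonzero constant $\rho(\rho-2)$, and the correction $\rho^{-2K}+(1-\rho)(2\rho)^{-K}$ is $O(\rho^{-2K})$ when $\rho<2$ (because $\rho^{-2K}/(2\rho)^{-K}=(2/\rho)^K\to\infty$) and $O\big((2\rho)^{-K}\big)$ when $\rho>2$; in both cases $\pi_K(K,K)=1-1/\rho+O(\cdot)$ with the claimed error. Finally $\rho=1$ and $\rho=2$ are immediate from the special values: $(2K+2^{-K})^{-1}=\tfrac1{2K}+O(K^{-2}2^{-K})$ and $\big(2-(K+2)2^{-(2K+1)}\big)^{-1}=\tfrac12+O(K2^{-2K})$.

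\textbf{Where the difficulty lies.} There is no genuine obstacle: once Theorem~\ref{blocking} is available this is elementary algebra together with geometric-series expansions. The only delicate point is the bookkeeping of which exponentially small term dominates in each range of $\rho$ — getting the thresholds $1/2$ (inside $\rho<1$) and $2$ (inside $\rho>1$) right is the crux — together with the related need to pick the right representation from Theorem~\ref{blocking}: since $(1-\rho)(2-\rho)$ and $\rho(\rho-2)$ change sign at $\rho=1$ and $\rho=2$, the ``small-$\rho$'' form \eqref{pi(K,K)} is convenient when the factor $\rho^{2K}$ is genuinely small, and the ``$(1-\rho)(2-\rho)$'' form is convenient when $\pi_K(K,K)$ stays bounded away from $0$.
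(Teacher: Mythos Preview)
Your proof is correct and follows exactly the approach of the paper, which simply states that the result ``is easily obtained from the explicit expression for $\pi_K(K,K)$ given in Theorem~\ref{blocking}.'' You have supplied the routine expansions that the paper omits, including the correct identification of the dominant error term in each regime.
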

\begin{proof}
It is easily obtained from the explicit expression for $\pi(K,K)$ given in Theorem~\ref{blocking}.
\end{proof}

 The JSQ model can be coupled  with  the $M/M/2/2K$ queue in such a way that the total number of  customers always remains larger in  JSQ  than in the other system. 
Actually, the difference  between the two appears only  when one of the queues becomes empty in the JSQ model. In this case,  one server idles in the JSQ model, while,  if some customer is waiting in the other queue, the server immediately begins a new service  in the  coupled $M/M/2/2K$ queue. The intuition is that this difference is negligible if the traffic is not very low, and if it is, the blocking probabilities are both very small.  The conclusion of the section is that  the blocking probabilities for the JSQ policy and  for the  $M/M/2/2K$ queue are indeed very close for any range of values of $\rho$. 

On the other hand,  JSQ can be coupled with one $M/M/1/K$ queue with arrival rate $2 \rho$ and service rate $2$, in  such a way that the latter queue dominates the amount of customers exceeding  $K$ in JSQ. Here, the difference lies in the fact that  the total number of customers in JSQ can become smaller than $K$.
 Nevertheless, the blocking probabilities, as functions of $\rho$, turn out to be uniformly close as $K$ gets large.

Let  $N(t)$ be the total number of customers present at time $t$ in the JSQ system, and let $\overline N(t)$, $\underline N(t)$ and $\underline { \underline N} (t)$ be the numbers of customers at time $t$ in, respectively, an $M/M/1/K$ queue with arrival rate $2 \rho$ and service rate $2$, an $M/M/2/2K$ queue with arrival rate $2 \rho$ and service rate $1$, and an $M/M/1/2K$ queue with parameters $2 \rho$ and $2$.


\begin{proposition} \label{couplings}
 The four processes $N, \overline N, \underline N$ and  $\underline { \underline N}$ can be coupled together in such a way that the inequalities
\begin{equation} \label{stochastic_order}  \underline { \underline N} (t) ~\le ~ \underline N(t) ~\le~ N(t) ~\le ~ K+ \overline N(t)
\end{equation}
are satisfied  at all positive times $t$ if they hold at $t=0$.


\end{proposition}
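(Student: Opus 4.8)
The plan is to construct all four processes on a common probability space by a synchronous coupling driven by shared Poisson clocks, then verify that each of the three inequalities in \eqref{stochastic_order} is preserved by every type of transition. Concretely, I would use one Poisson arrival clock of rate $2\rho$ feeding all four systems, and couple the departures as follows: the $M/M/1/K$ and $M/M/1/2K$ queues ($\overline N$ and $\underline{\underline N}$) each have a single rate-$2$ service clock; the $M/M/2/2K$ queue ($\underline N$) has two rate-$1$ service clocks; and the JSQ system has two rate-$1$ service clocks, one per queue. The two rate-$1$ clocks of $\underline N$ should be identified with the two rate-$1$ clocks of JSQ, and the single rate-$2$ clock of $\overline N$ (resp.\ $\underline{\underline N}$) should be realized as the superposition of those same two rate-$1$ clocks, so that a ``rate-$2$ departure'' occurs exactly when either rate-$1$ clock rings. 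This makes all departure epochs common to the relevant processes, and reduces the proof to a finite case check.

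First I would treat the rightmost inequality $N(t)\le K+\overline N(t)$. At an arrival, $N$ increases by $1$ unless $N=2K$ (full), and $\overline N$ increases by $1$ unless $\overline N=K$; since JSQ can only be full when $\overline N=K$ as well is false in general, the only dangerous case is $N<2K$, $\overline N=K$, but then $N\le 2K\le K+\overline N$ already with the new value $N+1\le 2K = K+\overline N$, so equality is preserved. Wait — more carefully: if $N=K+\overline N$ and an arrival comes with $\overline N=K$ (so $N=2K$), then $N$ does not increase either, fine; if $\overline N<K$ both increase by $1$ and the relation is preserved. At a rate-$2$ departure of $\overline N$ (i.e.\ one of the two service clocks rings): $\overline N$ decreases by $1$ iff $\overline N>0$; the JSQ total $N$ decreases by $1$ iff the corresponding queue is nonempty. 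The subtle point is that $N$ might decrease while $\overline N$ does not — but that only helps the inequality $N\le K+\overline N$. The genuinely delicate case is $\overline N>0$ decreasing while $N$ does \emph{not}: this happens only when the JSQ queue attached to the ringing clock is empty; but then, by the JSQ dynamics, the \emph{other} JSQ queue has at most one customer more than zero is not forced — however $N$ empty on one side forces $N\le K$ (since each queue has capacity $K$ and one is empty), hence $N\le K\le K+(\overline N-1)$ after the decrease, so the inequality survives. I would spell this bookkeeping out in a short table.

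Next, the two left inequalities $\underline{\underline N}\le \underline N\le N$. For $\underline N\le N$: arrivals raise both by $1$ unless the smaller is blocked; $\underline N$ is blocked only at $2K$, and $\underline N=2K$ forces $N=2K$ by the inductive hypothesis, so no violation. For departures, each of the two shared rate-$1$ clocks removes a customer from $\underline N$ iff $\underline N>0$ with at least one busy server, and from $N$ iff the matching JSQ queue is nonempty; the only way to break $\underline N\le N$ would be $\underline N$ staying put while $N$ decreases, which again only helps. The reverse danger ($\underline N$ decreasing, $N$ not) occurs when the JSQ queue on that clock is empty but $\underline N$ has a busy server there; since in $M/M/2$ a server is busy whenever $\underline N\ge 1$ (with the convention that server $i$ is busy iff $\underline N\ge i$ under a fixed assignment rule), one must choose the server-assignment rule in $\underline N$ to mirror JSQ, i.e.\ assign a customer of $\underline N$ to ``slot $i$'' exactly when JSQ queue $i$ is occupied — this is precisely the construction in the paragraph preceding the proposition, where $\underline N$ differs from $N$ only by letting an idle JSQ server work. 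With that rule, a rate-$1$ clock at an empty JSQ queue corresponds to an idle $\underline N$-slot too, so $\underline N$ does not decrease there either. Finally $\underline{\underline N}\le \underline N$ is the comparison of $M/M/1/2K$ (rate $2$) with $M/M/2/2K$ (rate $1{+}1$), with the rate-$2$ clock being the superposition of the two rate-$1$ clocks: both have capacity $2K$ and identical arrivals, while the total service rate of $\underline{\underline N}$ is $2\,\1_{\{\underline{\underline N}>0\}}$ versus $\1_{\{\underline N\ge 1\}}+\1_{\{\underline N\ge 2\}}$ for $\underline N$; when $\underline{\underline N}\le\underline N$ and both are $\ge 2$ the rates match, when $\underline{\underline N}=1\le\underline N$ the rate of $\underline{\underline N}$ is $2$ and of $\underline N$ is at least $1$, but a ring that decreases $\underline{\underline N}$ to $0$ is harmless since then $\underline N\ge 1=\underline{\underline N}+0$ fails — so here I would instead argue directly: the only risky event is $\underline{\underline N}$ decreasing while $\underline N$ does not, i.e.\ a rate-$1$ clock rings at a state with $\underline N\le 1$ but that server slot empty in $\underline N$; if $\underline{\underline N}>0$ then $\underline{\underline N}\le\underline N\le 1$ gives $\underline{\underline N}=\underline N=1$, the busy slot of $\underline N$ is some slot $i$, and we may couple so that the \emph{other} slot's clock, which is the one that would be ``wasted'' in $\underline N$, is also not used by $\underline{\underline N}$ — equivalently, realize the rate-$2$ clock of $\underline{\underline N}$ as the superposition of the two rate-$1$ clocks \emph{but only count a decrease when a busy-slot clock rings in $\underline N$}. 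The cleanest route, which I would adopt, is to note $\underline{\underline N}$ and $\underline N$ are two $M/M/\cdot/2K$ systems with the same arrivals and with departure rate of $\underline{\underline N}$ pointwise $\ge$ that of $\underline N$ whenever $\underline{\underline N}=\underline N$, which by the standard monotone coupling for birth–death-type comparisons yields $\underline{\underline N}\le\underline N$.

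The main obstacle is the bookkeeping in the third inequality $N(t)\le K+\overline N(t)$: the JSQ total and a single fast server behave quite differently (JSQ can drain one queue to $0$ while the other is near $K$, and JSQ loses a departure opportunity precisely then), so one must check carefully that every time $\overline N$ drops while $N$ does not, the state already satisfies $N\le K$ with strict-enough margin for the post-jump relation $N\le K+(\overline N-1)$ to hold. The key structural fact making this work — and the one I would isolate as a lemma — is that in JSQ a departure from queue $i$ is ``missed'' (server $i$ idle) only when $L_i=0$, and then $N=L_{3-i}\le K$, giving exactly the one unit of slack needed. Once this is in hand, all three inequalities reduce to routine transition-by-transition verification, and the proposition follows by induction on jump times.
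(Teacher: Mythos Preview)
Your overall plan---building all four processes from shared Poisson clocks and checking preservation of each inequality transition by transition---is exactly the paper's approach, and your treatment of $N\le K+\overline N$ is essentially correct. The gap is in the coupling for $\underline N\le N$.

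With your fixed assignment ``clock $i$ serves JSQ queue $i$'' together with the $M/M/2$ convention ``server $i$ busy iff $\underline N\ge i$'', the inequality $\underline N\le N$ is \emph{not} preserved. Take $\underline N=N=1$ with $L_1=0$, $L_2=1$: when clock~$2$ rings, $N$ drops to $0$ (queue $2$ was nonempty) while $\underline N$ stays at $1$ (server $2$ is idle since $\underline N<2$), and the inequality fails. Your own sentence identifies this as ``the only way to break $\underline N\le N$'' and then says it ``only helps'', which is backwards; you then spend the rest of the paragraph on the harmless direction. Your proposed fix---making the slot occupied in $\underline N$ mirror which JSQ queue is nonempty---is not well-defined (e.g.\ $\underline N=1$ with both JSQ queues nonempty) and, worse, would make the departure rate of $\underline N$ depend on the JSQ state rather than equal $\min(\underline N,2)$, so $\underline N$ would no longer have the $M/M/2/2K$ law.

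The paper resolves this by coupling the clocks to JSQ \emph{dynamically}: clock~$1$ always serves the \emph{longer} queue (queue $1$ on ties), clock~$2$ the \emph{shorter} (queue $2$ on ties). Then $N$ decreases at clock~$1$ iff $N\ge 1$, and at clock~$2$ iff $\min(L_1,L_2)\ge 1$. Matching this with ``clock~$1$ acts on $\underline N$ iff $\underline N\ge 1$, clock~$2$ iff $\underline N\ge 2$'' makes the check go through: if $\underline N=N$ and clock~$2$ decreases $N$, then $\min(L_1,L_2)\ge 1$ forces $N\ge 2$, hence $\underline N\ge 2$ and $\underline N$ decreases as well. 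With this same assignment your argument for $N\le K+\overline N$ still works (if $N=K+\overline N$ and $\overline N\ge 1$ then $N\ge K+1$, so $\min(L_1,L_2)\ge 1$ and $N$ decreases at either clock), and $\underline{\underline N}\le\underline N$ is handled inside the same construction rather than by a separate birth--death comparison.
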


\begin{proof}
 The four processes can be built from four independent Poisson processes, $\mathcal N^i_a, \mathcal N^i_d , i=1,2$, where $\mathcal N ^1_a, \mathcal N ^2_a$  each have parameter $\rho$,  and $\mathcal N^1_d, \mathcal N^2_d$, parameter $1$. For all systems, $\mathcal N^1_a+\mathcal N ^2_a$ represents the global flow of arrivals. More particularly for the JSQ system, $\mathcal N ^i_a$ ($i=1,2$) is the part of the total flow that is directed to file $i$ in case of equality of the two files.  For the $M/M/1/K$ and $M/M/1/2K$  queues, $\mathcal N^1_d + \mathcal N ^2_d$ is the service process associated with the unique server. While for both JSQ and the $M/M/2/2K$ queue,  $\mathcal N ^1_d$ and $ \mathcal N ^2_d$ represent the service processes of each of the two servers. The following details are important for the coupling $(\underline N, N)$.  In JSQ,  $\mathcal N ^1_d$ (that is, one of the two servers) is dedicated to the queue with maximal length, or to queue $1$  in case of equality; while  $\mathcal N ^2_d$ (that is, the other server) operates at the queue with minimal length, or at queue 2  when both are equal. As for the $M/M/2/2K$ process, $\mathcal N ^1_d$ operates when there is at least one customer present, while $\mathcal N ^2_d$ only does when  at  least 2 customers are present. These choices lead to the following expressions of the increments of the different processes. First for JSQ, \begin{multline*}
d L_1(t) = \1 _{L_1(t^-) < L_2(t^-)} \big ( \mathcal N ^1_a(dt)  + \mathcal N ^2_a(dt)  \big ) +   \1 _{L_1(t^-) =  L_2(t^-) <K}\, \mathcal N ^1_a(dt) \\
 - \1 _{ L_2(t^-) \vee 1 \le  L_1(t^-)} \, \mathcal N^1_d (dt)  - \1 _{1 \le  L_1(t^-) <  L_2(t^-)} \,  \mathcal N^2_d (dt)
\end{multline*}
\begin{multline*}
d L_2(t) = \1 _{L_2(t^-) < L_1(t^-)} \big ( \mathcal N ^1_a(dt)  + \mathcal N ^2_a(dt) \big ) +   \1 _{L_1(t^-) =  L_2(t^-) <K}\, \mathcal N^2 _a(dt) \\
 - \1 _{ L_1(t^-)  < L_2(t^-)} \, \mathcal N ^1_d (dt)  - \1 _{1 \le  L_2(t^-) \le  L_1(t^-)} \,  \mathcal N ^2_d (dt).
\end{multline*}
We get by summation
\begin{multline*}
d N(t) = \1 _{N(t^-)< 2 K}  \big  ( \mathcal N ^1_a(dt)  + \mathcal N ^2_a(dt)  \big  ) \\
 - \1 _{ N(t^-)\ge 1} \, \mathcal N ^1_d (dt)  - \1 _{
  L_1(t^-) \wedge  L_2(t^-) \ge 1} \,  \mathcal N ^2_d (dt)
\end{multline*}
Here and throughout the paper, we use the symbols $\vee$ and $\wedge$  to denote, respectively, the maximum and the minimum of two real numbers.  

Next,  for the three other processes,
\[d \underline N(t) = \1 _{\underline N(t^-)< 2 K}  \big  ( \mathcal N ^1_a(dt)  + \mathcal N ^2_a(dt)  \big  ) 
 - \1 _{ \underline N(t^-)\ge 1} \, \mathcal N ^1_d (dt)  - \1 _{\underline N(t^-) \ge 2} \,  \mathcal N ^2_d (dt), \]
  \[d \underline { \underline N} (t) = \1 _{\underline { \underline N}(t^-)<  2K}  \big  ( \mathcal N ^1_a(dt)  + \mathcal N ^2_a(dt)  \big  ) 
 - \1 _{ \underline { \underline N} (t^-)\ge 1} \, \big (\mathcal N ^1_d (dt) +  \mathcal N ^2_d (dt) \big ), \]
  \[d \overline N(t) = \1 _{\overline N(t^-)<  K}  \big  ( \mathcal N ^1_a(dt)  + \mathcal N ^2_a(dt)  \big  ) 
 - \1 _{ \overline N(t^-)\ge 1} \, \big (\mathcal N ^1_d (dt) +  \mathcal N ^2_d (dt) \big ). \]
 From these expressions, it is easily proved that, for any increasing point $t$ of either of the processes $\mathcal N ^i_a, \mathcal N ^i_d , i=1,2$, if  the  inequalities $\underline { \underline N}(t^-) \le \underline N(t ^-) \le N(t^-) 
 \le  K+ \overline N(t^-)$ hold, then they are still valid at $t$. That is, the inequalities  \eqref{stochastic_order}  are preserved in time.  Indeed,  $N, \underline N, \underline { \underline N}$ and $\overline N$  have jumps $\pm 1$, and no upward jump of one process can coincide with a downward jump of another one. So, for example, the first inequality $\underline { \underline N}(t^-) \le \underline N(t ^-) $ can turn into $ \underline { \underline N}(t) > \underline N(t ) $ only if $\underline { \underline N}(t^-) = \underline N(t ^-)$ and $d \underline { \underline N}(t)  > d \underline N(t )  $. Same for the other inequalities. But this clearly cannot occur at increasing points of  $ \mathcal N ^1_a$ or $ \mathcal N ^2_a$ (if some  equality holds at $t^-$,  both the considered processes undergo the same positive jumps from $ \mathcal N ^1_a$ or $ \mathcal N ^2_a$). As for increasing points of  $\mathcal N ^1_d$,  they preserve both $\underline { \underline N} \le \underline N\le N$,  for analogous reasons, and  $N \le  \overline N +K$ because $N (t^-) = \overline N(t^-) +K$ implies that $N(t^-) \ge 1$.  Lastly, at increasing points of $\mathcal N ^2_d$,  the inequality  $\underline { \underline N} \le \underline N$ is clearly preserved. The same holds for inequality $ \underline N\le N$, because if   $\underline N(t ^-) = N(t^-) =  L_1(t^-) +  L_2(t^-)$ and  $ L_1(t^-) \wedge  L_2(t^-) \ge 1$, then $\underline N(t ^-)  \ge 2$. And  $N \le  \overline N +K$  is   also preserved because if  $ N (t^-) = \overline N(t^-) +K$ and $ \overline N(t^-)\ge 1$, then $L_1(t^-) +  L_2(t^-) \ge K+1$, and so, necessarily, $ L_1(t^-) \wedge  L_2(t^-) \ge 1$.  The proof of  \eqref{stochastic_order} is complete.

\end{proof}


\medskip
For $\rho >0$ and $K \ge 1$, we denote by $\nu _K$ the geometric distribution with parameter $\rho$ on $\{0,\cdots,K\}$, and by $\nu'$ the stationary distribution  of the $M/M/2/2K$ queue with arrival rate $2\rho$ and service rates $1$, given by
\begin{equation} \label{nu'}
\qquad \nu'(0)= \frac{1}{2\sum _{k=0}^{2K} \rho ^k - 1} 
\quad \text{ and } \quad  \nu'(n) =
2\nu'(0)\rho^n \  \text{ for } 1\leq n\leq 2K.
\end{equation}

 It results  from Proposition \ref{couplings} that  
\begin{equation}  \label{order} \nu_{2K} (2K) \le \nu '(2K) \le \pi_K(K,K) \le \nu_K (K) .
\end{equation}

Note that those inequalities are readily recovered from the following explicit values of the different blocking probabilities: For $\rho >0$,
\[ \nu _K (K) = \frac{\rho ^K} {\sum _{k=0}^K \rho ^k}, \qquad   \nu '(2K) =  \frac {2 \rho ^{2K}} {2\sum _{k=0}^{2K} \rho ^k - 1} \]
and
\[ \pi _K(K,K) =  \frac {2 \rho ^{2K}} {2\sum _{k=0}^{2K} \rho ^k - \sum _{k=0}^{K-1} (\rho/2) ^k} \cdot  \]
The next proposition provides
bounds on the uniform norms   of  $\nu_K (K) - \pi_K(K,K) $  and   $ \pi_K(K,K) - \nu '(2K)$. Those show that $\pi_K(K,K)$, as a function of $\rho$, is at distance of the order of $K  ^{-1} $ from $\nu_K (K)$  and  $K ^{-2}$ from $\nu ' (2K)$.  Note that $ \nu_K(K)$ is the blocking probability  in the system of two independent $M/M/1/K$ queues with arrival rates $\rho$ and service rates $1$.


\begin{proposition} \label{uniform_bounds}
 For  any  $K \ge 1$,
\[  \frac{K + 2^{-K} -1  }{(K+1)(2 K + 2^{-K}) } ~ \le ~  \sup _{ \rho >0} \Big ( \nu_K (K)  - \pi _K(K,K)  \Big ) ~ \le ~ \frac{1}{K+1} \,  \]
\[\frac{2^{-1} - 2^{-K} }{(2K+ 2^{-1})(2K+2^{-K} )}  \le\sup _{ \rho >0} \Big ( \pi _K(K,K)  -  \nu '(2K)  \Big ) ~ \le ~ \frac{ 2} {K^2 }\,\cdot \]

\end{proposition}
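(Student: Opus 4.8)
The argument rests on the explicit formulas recalled just before the statement, together with the values at $\rho=1$ furnished by Theorem~\ref{blocking}. Write $a_m=\sum_{k=0}^{m}\rho^k$ and $b_K=\sum_{k=0}^{K-1}(\rho/2)^k$, so that $\nu_K(K)=\rho^K/a_K$, $\nu'(2K)=2\rho^{2K}/(2a_{2K}-1)$ and $\pi_K(K,K)=2\rho^{2K}/(2a_{2K}-b_K)$; by \eqref{order} these are positive and satisfy $\nu'(2K)\le\pi_K(K,K)\le\nu_K(K)$, so both quantities whose suprema are in question are nonnegative for every $\rho>0$. For the two lower bounds I would simply take $\rho=1$: there $\nu_K(K)=(K+1)^{-1}$, $\pi_K(K,K)=(2K+2^{-K})^{-1}$ by Theorem~\ref{blocking}, and $\nu'(2K)=(2K+\tfrac12)^{-1}$, and subtracting yields $\nu_K(K)-\pi_K(K,K)=\dfrac{K+2^{-K}-1}{(K+1)(2K+2^{-K})}$ and $\pi_K(K,K)-\nu'(2K)=\dfrac{2^{-1}-2^{-K}}{(2K+\frac12)(2K+2^{-K})}$, which are exactly the stated lower bounds for the suprema.

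For the first upper bound I would pass to $r=1/\rho$. Then $\nu_K(K)^{-1}=\sum_{j=0}^{K}r^j=:P$ and, after a reindexing of the finite geometric sum defining $b_K$, $\pi_K(K,K)^{-1}=P+R$ with $R=\sum_{j=K+1}^{2K}\bigl(1-2^{\,j-2K-1}\bigr)r^j\ge0$. Hence $\nu_K(K)-\pi_K(K,K)=R/\bigl(P(P+R)\bigr)$, and the inequality $\le(K+1)^{-1}$ is equivalent to $R\,(K+1-P)\le P^2$. If $r\ge1$ then $P\ge K+1$ and the left-hand side is $\le0$. If $r<1$, I would bound $R<\sum_{j=K+1}^{2K}r^j=r^{K+1}(1-r^K)/(1-r)$ and write $K+1-P=\bigl(K-(K+1)r+r^{K+1}\bigr)/(1-r)$; the claim then reduces to the polynomial inequality $r^{K+1}(1-r^K)\bigl(K-(K+1)r+r^{K+1}\bigr)\le(1-r^{K+1})^2$ on $(0,1)$, which on expansion becomes $g_1(r)+r^{2K+1}h(r)\ge0$ with $g_1(r)=1-(K+2)r^{K+1}+(K+1)r^{K+2}$ and $h(r)=K-(K+1)r+r^{K+1}$. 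Each factor is nonnegative on $(0,1)$ because $g_1(1)=h(1)=0$ while $g_1'(r)=(K+1)(K+2)r^K(r-1)\le0$ and $h'(r)=(K+1)(r^K-1)\le0$, so $g_1$ and $h$ are nonincreasing there.

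For the second upper bound I would subtract the closed forms to obtain $\pi_K(K,K)-\nu'(2K)=\dfrac{2\rho^{2K}(b_K-1)}{(2a_{2K}-b_K)(2a_{2K}-1)}$, where $b_K-1=\sum_{k=1}^{K-1}(\rho/2)^k\ge0$. The key estimate is that the coefficient of $\rho^k$ in $2a_{2K}-b_K$ equals $2$ for $K\le k\le 2K$, so $2a_{2K}-b_K\ge\sum_{k=K}^{2K}2\rho^k=2\rho^K a_K$, while $2a_{2K}-1\ge a_{2K}$ trivially; hence $(2a_{2K}-b_K)(2a_{2K}-1)\ge2\rho^K a_K a_{2K}$ and $\pi_K(K,K)-\nu'(2K)\le\rho^K(b_K-1)/(a_K a_{2K})$. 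Then I would split on $\rho$. If $\rho\le1$, use $a_{2K}\ge a_K$ and $b_K-1\le\rho\sum_{k\ge1}2^{-k}=\rho$ (since $\rho^k\le\rho$ for $k\ge1$) to get the bound $\le\rho^{K+1}/a_K^2$; if $\rho\ge1$, use $a_{2K}\ge\rho^K a_K$ and $b_K-1\le\rho^{K-1}\sum_{k\ge1}2^{-k}=\rho^{K-1}$ (since $\rho^k\le\rho^{K-1}$ for $k\le K-1$) to get the bound $\le\rho^{K-1}/a_K^2$. In both cases the AM--GM bound $a_K\ge(K+1)\rho^{K/2}$ gives $a_K^2\ge(K+1)^2\rho^K$, and since $\rho^K\ge\rho^{K+1}$ for $\rho\le1$ and $\rho^K\ge\rho^{K-1}$ for $\rho\ge1$, we conclude $\pi_K(K,K)-\nu'(2K)\le(K+1)^{-2}\le2/K^2$.

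The delicate point, in my view, is choosing intermediate estimates of exactly the right strength: crude substitutions (replacing $b_K-1$ by a constant multiple of $a_K$, bounding $\bigl(\sum_{k=0}^{2K}\rho^k\bigr)^{2}$ only by its degree-matching coefficients, or bounding $R$ coefficient by coefficient) lose a factor of order $K$ and do not close the argument; one genuinely needs the lower bound $2a_{2K}-b_K\ge2\rho^K a_K$ and the precise grouping $g_1(r)+r^{2K+1}h(r)$ used above.
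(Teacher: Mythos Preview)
Your argument is correct, and for both upper bounds you take a different route from the paper.

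For the first upper bound, the paper exploits the ordering \eqref{order} once more: since $\pi_K(K,K)\ge\nu_{2K}(2K)$, it replaces $\pi_K(K,K)$ by the much simpler $\nu_{2K}(2K)$, so that only the elementary quantity $\rho^K/\sum_{k=0}^{2K}\rho^k$ has to be bounded. This avoids the polynomial inequality $g_1(r)+r^{2K+1}h(r)\ge0$ altogether. Your direct approach via $r=1/\rho$ and the decomposition $\pi_K(K,K)^{-1}=P+R$ is more work, but it is self-contained and does not need the auxiliary comparison with $\nu_{2K}(2K)$.

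For the second upper bound, the paper bounds the denominators by $2\rho\sum_{k=0}^{2K-1}\rho^k$ and $\sum_{k=0}^{2K}\rho^k$, then splits at $\rho=3/2$; for $\rho<3/2$ it uses $\sum_{k\ge0}(3/4)^k=4$, and for $\rho\ge3/2$ it needs a separate check that $\bigl(2(1+(3/2)^K)\bigr)^{-1}\le 2/K^2$. Your two ingredients---the lower bound $2a_{2K}-b_K\ge 2\rho^K a_K$ and the AM--GM estimate $a_K\ge(K+1)\rho^{K/2}$---are cleaner, give a natural split at $\rho=1$, and in fact yield the sharper bound $(K+1)^{-2}$ rather than $2/K^2$.
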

\begin{proof}
The  lower bounds are trivially obtained by taking values  at $\rho =1$.  As for the upper bounds, we first use the inequality
\[\nu_K (K) - \pi_K(K,K)  ~ \le ~ \nu_K (K) - \nu _{2K}(2K)  \qquad (\rho >0)\]
that results from \eqref{order}. This yields, for $\rho >0$,
\begin{multline*}
 \nu _K(K) - \pi_K(K,K)  ~ \le ~   \frac{\rho ^K} {\sum _{k=0}^K \rho ^k} -  \frac{\rho ^{2K}} {\sum _{k=0}^{2K} \rho ^k} 
=  \frac{\rho ^K  \sum _{k=0}^{K-1} \rho ^k }{\left (\sum _{k=0}^K \rho ^k \right ) \left (\sum _{k=0}^{2K} \rho ^k \right )}\\
\le~  \frac{\rho ^K  }{\sum _{k=0}^{2K} \rho ^k } ~\le ~   \min \left ( \frac{\rho ^K  }{\sum _{k=0}^{K} \rho ^k }\,  , \,  \frac{\rho ^K  }{\sum _{k=K}^{2K} \rho ^k } \right ) 
 ~\le ~\frac{1}{K+1} .
\end{multline*}
Here, for $\rho < 1$ we have used  $ \rho ^K / (\sum _{k=0}^{K} \rho ^k) = ( \sum _{k=0}^{K} \rho ^{-k}) ^{-1}$, and for $\rho \ge 1$, $ \rho ^K / (\sum _{k=K}^{2K} \rho ^k) = ( \sum _{k=0}^{K} \rho ^k) ^{-1} $. 

Now for  $ \pi _K(K,K)  -  \nu '(2K)$, one can compute
 \begin{multline*}
\pi _K(K,K)  -  \nu '(2K) 
=  \rho ^{2K+1}  \frac { \sum _{k=0}^{K-2} (\rho/2) ^k }{ \left (2\sum _{k=0}^{2K} \rho ^k - 1 \right )\left ( 2\sum _{k=0}^{2K} \rho ^k - \sum _{k=0}^{K-1} (\rho/2) ^k \right ) } \, \cdot 
 \end{multline*}
 Then using $\ 2 \sum _{k=0}^{2K} \rho ^k -1 ~\ge~  2\sum _{k=0}^{2K} \rho ^k -2 ~=~ 2 \rho \sum _{k=0}^{2K-1} \rho ^k $, and  \\ 
\[2\sum _{k=0}^{2K} \rho ^k - \sum _{k=0}^{K-1} (\rho/2) ^k  ~\ge~  \sum _{k=0}^{2K} \rho ^k+ \sum _{k=0}^{K-1}  \big ( \rho ^k -(\rho/2) ^k\big )  ~\ge ~ \sum _{k=0}^{2K} \rho ^k   , \]
 one gets
 \[ \pi _K(K,K)  -  \nu '(2K)  ~\le ~ \frac {  \rho ^{2K}  \sum _{k=0}^{K-2} (\rho/2) ^k }{2  \left ( \sum _{k=0}^{2K-1} \rho ^k  \right )    \left ( \sum _{k=0}^{2K} \rho ^k  \right )  } \cdot \] 
 We next consider two cases. First, if $\rho < 3/2$, then $\sum _{k=0}^{K-2} (\rho/2) ^k \le \sum _{k=0}^{\infty } (3/4) ^k = 4,$ so that
  \[ \pi _K(K,K)  -  \nu '(2K)  ~\le ~ 2\left (\frac {  \rho ^{K}  }{ \sum _{k=0}^{2K-1} \rho ^k } \right )  ^2 \cdot \] 
 Analogously to the argument previously used for $ \nu (K) - \pi_K(K,K) $, 
\[ \frac {  \rho ^{K}  }{ \sum _{k=0}^{2K-1} \rho ^k } ~\le ~ \min  \left ( \frac{\rho ^K  }{\sum _{k=1}^{K} \rho ^k }\,  , \,  \frac{\rho ^K  }{\sum _{k=K}^{2K-1} \rho ^k } \right )  ~\le ~\frac{1}{K} ,\]
 where the last step results from  considering both cases $\rho <1$ and $1 \le \rho <3/2$. One gets, for  $\rho < 3/2$,
  \[ \pi _K(K,K)  -  \nu '(2K)  ~\le ~ \frac{2}{K^2} .\]
  Now for $\rho \ge 3/2$, using $ \sum _{k=0}^{K-2} (\rho/2) ^k  ~\le ~ \sum _{k=0}^{K-1} \rho ^k$ and
 $\displaystyle{  \sum _{k=0}^{2K-1} \rho ^k  ~=~ (1 + \rho ^K) \sum _{k=0}^{K-1} \rho ^k}$ 
 gives
 \[\pi _K(K,K)  -  \nu '(2K)  ~\le ~  \frac {  \rho ^{2K}  }{ 2   \left (1+ \rho ^K  \right )   \left ( \sum _{k=0}^{2K} \rho ^k  \right ) } ~\le ~  \frac { 1 }{ 2   \left (1+ (3/2)^K  \right )  }  \cdot \] 
 It is easily checked that $\displaystyle{\frac { 1 }{ 2   \left (1+ (3/2)^K  \right )  } 
 \le \frac{2}{K^2}}$ holds for all $K \ge 1$,  and this completes the proof. 

\end{proof}

Figure~\ref{blocking_proba_plot} shows that, as regards the relative error on the blocking probability, only the approximation by the $M/M/2/2K$ queue  is satisfactory for all values of $\rho$.  Nevertheless, for $\rho>1$, as $K$ grows, both approximations become accurate.

\begin{figure} 
\centering
\begin{tikzpicture}[scale=0.7]
       \begin{axis}[
title = {$K=5$},
legend style ={at={(0.98,0.20)}, anchor=north east, draw=none, fill=none},
xlabel={$\rho$},
ylabel={Ratio of blocking probabilities},
font=\large,
label style={font=\large},
xmin=0, ymin=0.85, xmax=2, ymax=1.15
]
\addplot[blue, thick, smooth, samples = 80, domain = 1.01:1.99]
	{-((1/32)*x-x^7-1/32-1/x^5+2*x^6)/((x^6-1)*(x-2))};
\addplot[red, thick, smooth, samples = 80, domain = 0:1.99] 
	{(-(1/16)*x^6+2*x^12+(1/16)*x^5-4*x^11+2)/((2*x^11-x-1)*(x-2))};
 	\addplot[black, dashed, domain=0:2, samples=2]{1};
 \end{axis}
\end{tikzpicture}
\begin{tikzpicture}[scale=0.7]
       \begin{axis}[
title = {$K=30$},
legend style ={at={(0.98,0.10)}, row sep=8pt, anchor=south east, draw=none, fill=none},
xlabel={$\rho$},
font=\large,
label style={font=\large},
xmin=0, ymin=0.85, xmax=2, ymax=1.15
]
\addplot[blue, thick, smooth, samples = 50, domain = 1.01:1.99]
	{(-(1/1073741824)*x+x^32+1/1073741824+1/x^30-2*x^31)/((x^31-1)*(x-2))};
     \addlegendentry{$\dfrac{\nu_K(K)}{\pi_K(K,K)}$ };
\addplot[red, thick, smooth, samples = 80, domain = 0:1.99] 
	{-((1/536870912)*x^31-2*x^62-(1/536870912)*x^30+4*x^61-2)/((2*x^61-x-1)*(x-2))};
     \addlegendentry{$\dfrac{\nu'(2K)}{\pi_K(K,K)}$ };
     \addplot[black, dashed, domain=0:2, samples=2]{1};
 \end{axis}
      \end{tikzpicture}      
      
\caption{Ratios of blocking probabilities of the $M/M/1/K$ and $M/M/2/2K$ queues with respect to the JSQ blocking probability as a function of the arrival-to-service rate ratio $\rho$, for $K=5$ and $K=30$.}
\label{blocking_proba_plot}
\end{figure}
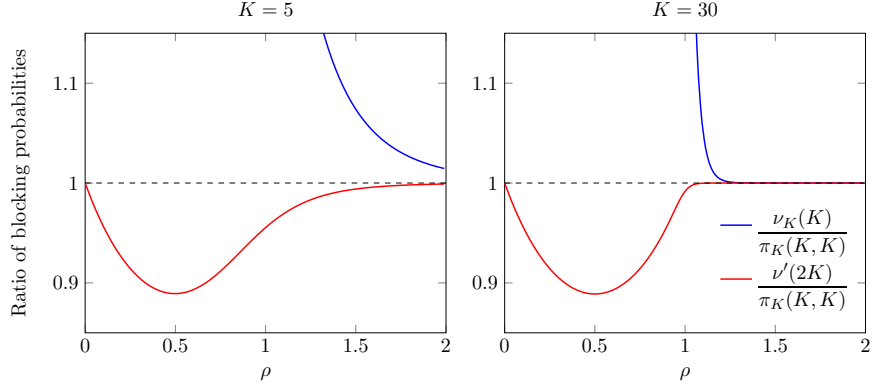

\subsection{The total number of customers in the system}

An upper bound on the stationary mean total number of customers in the JSQ system is derived from the  following lemma. Note that  the  probabilities at  values larger than or equal to $K$ are here explicit.

\begin{lemma}\label{lemma_bound}
The total number of customers $N_K$ at steady state in the JSQ model is such that
\begin{align*}
\mathbb P (N_K = n) &\leq \dfrac{\pi_K(K,K)}{\rho^{2K}} \rho^n, \quad \;0 \leq n < K,\\
\mathbb P (N_K = n) &= \dfrac{\pi_K(K,K)}{\rho^{2K}} \rho^n, \quad \;
  K \leq n \leq 2K,
\end{align*}
where $\pi_K(K,K)$ is given by Theorem \ref{blocking}.
\end{lemma}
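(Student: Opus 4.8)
The plan is to analyse the level process $N_K(t)=L_1(t)+L_2(t)$ through a stationary cut balance for the two–dimensional chain, \emph{without} pretending that $N_K$ is itself Markov. The first step is to read off the relevant rates from the $Q$–matrix. An arrival raises $N_K$ by $1$ and, since under JSQ every arriving customer is routed to a single queue, it occurs at total rate $2\rho$ out of \emph{every} state with $N_K<2K$. A departure lowers $N_K$ by $1$; its total rate is $2$ out of any state with both queues nonempty, and $1$ out of a state with exactly one empty queue. The key observation is that a state $(j,k)$ with $j+k=n>K$ necessarily has $j\wedge k\ge n-K\ge1$, so for $n>K$ all states on the anti-diagonal $\{j+k=n\}$ have departure rate $2$; the only states carrying the reduced rate $1$ are the axis states $(0,m)$ and $(m,0)$ with $1\le m\le K$.

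The second step is to write the balance of stationary flow across the cut separating $\{v:N_K(v)\le n-1\}$ from its complement. Because $N_K$ changes by $\pm1$, the only crossing transitions go between level $n-1$ and level $n$; hence, writing $p_n=\P(N_K=n)$ and using the symmetry $\pi_K(0,m)=\pi_K(m,0)$, this identity reads
\[
\rho\,p_{n-1}=p_n \quad\text{for } K+1\le n\le 2K, \qquad
\rho\,p_{n-1}=p_n-\pi_K(0,n)\quad\text{for } 1\le n\le K.
\]
Equivalently, these relations are obtained by summing the balance equations \eqref{balance} over each anti-diagonal.

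The third step is to solve these relations. The first one gives $p_n=\rho^{\,n-K}p_K$ for $K\le n\le 2K$; since level $2K$ consists of the single state $(K,K)$, we have $p_{2K}=\pi_K(K,K)$, whence $p_K=\pi_K(K,K)\rho^{-K}$, and therefore $p_n=\pi_K(K,K)\rho^{\,n-2K}=\rho^{-2K}\pi_K(K,K)\,\rho^n$ for $K\le n\le 2K$, which is the claimed equality. The second relation yields $p_{n-1}\le p_n/\rho$ for $1\le n\le K$, since the dropped term $\pi_K(0,n)$ is nonnegative; a descending induction then gives $p_n\le\rho^{\,n-K}p_K$ for $0\le n\le K$, and substituting the value of $p_K$ produces $p_n\le\rho^{-2K}\pi_K(K,K)\,\rho^n$ for $0\le n<K$.

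I do not expect a genuine obstacle; the one point requiring care is the bookkeeping of departure rates along the anti-diagonals — namely, checking that above level $K$ both queues are forced to be nonempty (so departures always occur at the full rate $2$), and that the boundary defect at the axis states enters the level–$n\le K$ relation with the favourable sign, so that the equality above level $K$ degrades precisely to an inequality below it. One should also keep in mind that $N_K$ is not a birth–death chain, which is why the two displayed relations must come from global balance of the bivariate chain rather than being postulated directly.
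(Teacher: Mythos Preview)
Your proof is correct and takes a genuinely different, more elementary route than the paper's. The paper works with generating functions: it expresses $\sum_n \P(N_K=n)\,x^n$ as $2F_K(x^2,x)-B_K(x^2)$, reduces this via the functional equation \eqref{F_K} to a rational expression in $A_K$, expands, and then crucially invokes the identity $A_K(1/\rho)=\rho^{-2K}\pi_K(K,K)$ (equation \eqref{A_K(1/rho)}, itself obtained from the complex-variable analysis of \eqref{A_K}). Your cut-balance argument bypasses all of this: the level recursion $\rho\,p_{n-1}=p_n-\pi_K(0,n)\,\1_{\{n\le K\}}$ follows directly from stationarity of the bivariate chain, and the boundary value $p_{2K}=\pi_K(K,K)$ anchors the geometric segment $K\le n\le 2K$. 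Your approach is self-contained and arguably more transparent; the paper's approach, on the other hand, delivers the exact formula $\P(N_K=n)=\rho^n\sum_{k=0}^{n\wedge K}\pi_K(0,k)\rho^{-k}$ for every $n$ (which your recursion would also give if iterated rather than bounded).

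One small inaccuracy: your parenthetical that the two displayed relations are ``equivalently obtained by summing the balance equations over each anti-diagonal'' is not quite right. Summing over a single anti-diagonal $\{j+k=n\}$ yields a three-term relation linking $p_{n-1},p_n,p_{n+1}$; the two-term cut identity comes from summing over the half-space $\{j+k\le n-1\}$, as you correctly state first. This does not affect the argument.
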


\begin{proof}
From  equation \eqref{generating}, 
\begin{equation} \label{lemma_bound_1}
	\sum_{n=0}^{2K} \mathbb P (N_K = n) x^n = 2F_K(x^2,x)-B_K(x^2).
\end{equation}

Using equation \eqref{F_K} and the definition of $A_K$, we can show that for $x \neq 0$, $x \neq 1$, $x \neq 1/\rho$,
\begin{equation} \label{lemma_bound_2}
	2F_K(x^2,x)-B_K(x^2) = \frac{\sum_{k=0}^K \pi_K(0,k) x^k -\rho x^{2K+1}\pi_K(K,K)}{1-\rho x}.
\end{equation}

Expanding the denominator of the right-hand side of equation \eqref{lemma_bound_2} and rearranging the sums, it holds that, for $|x|<1/\rho$,
\begin{align} \label{lemma_bound_3}
\sum_{n=0}^{2K} \mathbb P (N_K=n) x^n &= 
\sum_{n \geq 0} \left( \sum_{k=0}^{n\wedge K} \pi_K(0,k)\rho^{n-k} \right)x^n - \sum_{n > 2K} \rho^{n-2K}\pi_K(K,K)\,x^n \nonumber\\
	&= \sum_{n = 0}^{2K} \left( \sum_{k=0}^{n\wedge K} \pi_K(0,k)\rho^{-k} \right)\rho^{n}x^n,
\end{align}
where we have used equation \eqref{A_K(1/rho)} to cancel the terms which have $n>2K$. Finally, by comparing both sides of equation \eqref{lemma_bound_3} it follows that, for ${0\leq n \leq 2K}$,
\begin{equation*}
\mathbb P (N_K=n) = \left( \sum_{k=0}^{n\wedge K} \pi_K(0,k)\rho^{-k} \right)\rho^{n} \leq A_K(1/\rho)\,\rho^n,
\end{equation*}
where we have equality if $n \geq K$. Using equation \eqref{A_K(1/rho)} again ends the proof.
\end{proof}
\begin{proposition}\label{proposition_Nbar}
The stationary mean number $\E(N_K)$ of customers  in the system  can be bounded as follows, for $\rho>0$, $\rho\not =1$,
\begin{align*}
 {\frac {2\rho\, \left( 1- \left( 1+2K \left( 1-\rho \right) \right) {\rho}^{2K} \right) }{ \left( 1-\rho \right)  \left( 1+\rho-2\,{\rho}^{2K+1} \right) }} \leq \E(N_K) \leq \left(2K(\rho-1)+\rho^{-2K}-1 \right) \dfrac{\rho\,\pi_K(K,K)}{(\rho-1)^2}  ,
\end{align*}
where $\pi_K(K,K)$ is given by Theorem \ref{blocking}, and for $\rho=1$,
\[\frac{K\,(2K+1)}{2K+1/2}\leq \E(N_K) \leq \frac{K\,(2K+1)}{2K+2^{-K}}.\]
\end{proposition}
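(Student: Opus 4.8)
The plan is to prove the two bounds separately: the upper one is a direct consequence of Lemma~\ref{lemma_bound}, while the lower one follows from the coupling of Proposition~\ref{couplings}. In both cases everything reduces, after elementary algebra, to the classical identity
\[ \sum_{n=0}^{2K} n\rho^n \;=\; \frac{\rho\bigl(1-(2K+1)\rho^{2K}+2K\rho^{2K+1}\bigr)}{(1-\rho)^2}, \qquad \rho\neq1. \]

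\emph{Upper bound.} By Lemma~\ref{lemma_bound}, $\mathbb P(N_K=n)\le \pi_K(K,K)\,\rho^{\,n-2K}$ for every $n\in\{0,\dots,2K\}$ (with equality once $n\ge K$, but only the inequality is used). Summing against $n$,
\[ \E(N_K)=\sum_{n=0}^{2K} n\,\mathbb P(N_K=n)\;\le\;\frac{\pi_K(K,K)}{\rho^{2K}}\sum_{n=0}^{2K} n\rho^n, \]
and inserting the closed form above, then pulling out the factor $\rho^{-2K}$, yields exactly $\bigl(2K(\rho-1)+\rho^{-2K}-1\bigr)\,\rho\,\pi_K(K,K)/(\rho-1)^2$. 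The value at $\rho=1$ is obtained either by letting $\rho\to1$ and invoking continuity of $\pi_K(K,K)$ and of $\E(N_K)$ in $\rho$ (as in the proof of Theorem~\ref{blocking}), or directly from $\sum_{n=0}^{2K}n=K(2K+1)$ together with $\pi_K(K,K)=(2K+2^{-K})^{-1}$.

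\emph{Lower bound.} Start the JSQ chain from $(0,0)$ and the $M/M/2/2K$ queue from the empty state, driven by the common Poisson processes of Proposition~\ref{couplings}; since all the inequalities \eqref{stochastic_order} hold at $t=0$, we get $\underline N(t)\le N(t)$ for every $t\ge0$, hence $\E[\underline N(t)]\le\E[N(t)]$. Both chains live on finite state spaces and are irreducible, so their time-$t$ marginals converge to their stationary laws; letting $t\to\infty$ gives $\E(N_K)\ge\E_{\nu'}(\underline N)$, where $\nu'$ is the $M/M/2/2K$ stationary distribution \eqref{nu'}. Finally $\E_{\nu'}(\underline N)=2\nu'(0)\sum_{n=0}^{2K}n\rho^n$; substituting $\nu'(0)=(1-\rho)/(1+\rho-2\rho^{2K+1})$ and the closed form above, and using $1-(2K+1)\rho^{2K}+2K\rho^{2K+1}=1-\bigl(1+2K(1-\rho)\bigr)\rho^{2K}$, produces precisely the stated lower bound; the case $\rho=1$ follows by continuity or from $\nu'(0)=1/(4K+1)$.

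The computations are routine. The only genuinely conceptual point is the passage, in the lower bound, from the pathwise domination of Proposition~\ref{couplings} to a domination of stationary means---immediate here because the state spaces are finite and the chains irreducible, so the time-$t$ marginals converge to the stationary distributions while the coupling keeps $\underline N(t)\le N(t)$ for all $t$.
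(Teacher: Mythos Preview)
Your proof is correct and follows essentially the same approach as the paper: the upper bound comes from summing the pointwise inequality of Lemma~\ref{lemma_bound} against $n$, and the lower bound is the stationary mean of the $M/M/2/2K$ queue, justified via the coupling of Proposition~\ref{couplings}. You have simply filled in the algebraic details (the closed form of $\sum_{n=0}^{2K} n\rho^n$ and the explicit passage from pathwise domination to domination of stationary means) that the paper leaves implicit.
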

\begin{proof}
The upper bound follows directly from Lemma \ref{lemma_bound} and the definition of $\E(N_K)$. The lower bound is simply the stationary mean number of customers in an $M/M/2/2K$ queue; see equation~\eqref{nu'}. It is smaller than $\E(N_K)$ by the coupling argument introduced in Section~\ref{subsec2.2}. Bounds for $\rho=1$ are obtained by extending the previous expressions by continuity. Indeed,  as mentioned in the proof of Theorem \ref{blocking},  $ ( \pi_K(j,k), 0 \le j,k \le K)$ is continuous with respect to $\rho > 0$. 
 \end{proof}
 
 In Figure \ref{nbar_lower_upper_bound}, we check the tightness of the bounds presented in Proposition \ref{proposition_Nbar}. The fact that the JSQ policy achieves a  stationary mean number of customers very close to that of the $M/M/2/2K$ queue (lower bound) is remarkable. Furthermore, when $\rho \to +\infty$, the difference between the upper and the lower bounds is of the order of $4K(2\rho)^{-K-1}+\mathcal{O}(\rho^{-K-2})$.

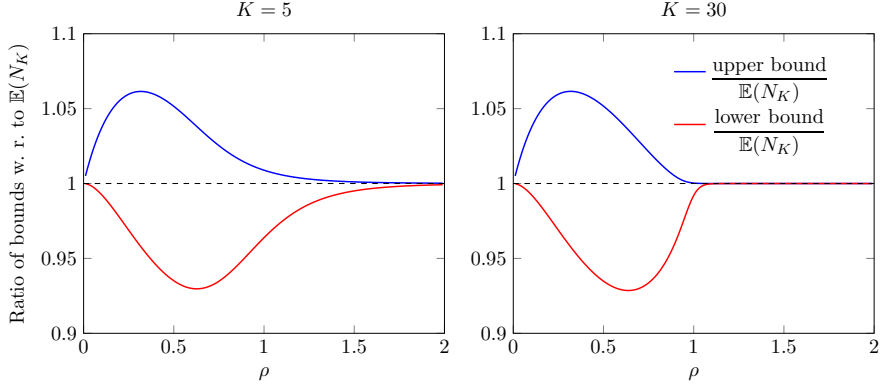
\begin{figure}[htbp] 
\centering
\begin{tikzpicture}[scale=0.7]
\begin{axis}
[
title = {$K=5$},
area legend,
legend style={at={(0.9,0.95)},
anchor=north east, draw=none, fill=none},
font=\large,
label style={font=\large},
ylabel={Ratio of bounds w.~r.~to $\E(N_K)$},
xlabel={$\rho$},
xmin=0, xmax=2,
ymin=0.9, ymax=1.1
]

\addplot[blue, line legend, thick] table
	[
		x expr = \thisrow{rho},
    	y expr = \thisrow{upper}
    ] {n_bar_bound_K_5.txt};
\addplot[red, line legend, thick] table
	[
		x expr = \thisrow{rho},
    	y expr = \thisrow{lower}
    ] {n_bar_bound_K_5.txt};
\addplot[black, dashed, domain=0:2, samples=100,unbounded coords=jump]{1};
\end{axis}
\end{tikzpicture}
\begin{tikzpicture}[scale=0.7]
\begin{axis}
[
title = {$K=30$},
area legend,
legend style={at={(0.9,0.95)},row sep=8pt,
anchor=north east, draw=none, fill=none},
font=\large,
label style={font=\large},
xlabel={$\rho$},
xmin=0, xmax=2,
ymin=0.9, ymax=1.1
]

\legend{$\dfrac{\text{upper bound}}{\E(N_K)}$,$\dfrac{\text{lower bound}}{\E(N_K)}$}
\addplot[blue, line legend, thick] table
	[
		x expr = \thisrow{rho},
    	y expr = \thisrow{upper}
    ] {n_bar_bound_K_30.txt};
\addplot[red, line legend, thick] table
	[
		x expr = \thisrow{rho},
    	y expr = \thisrow{lower}
    ] {n_bar_bound_K_30.txt};
\addplot[black, dashed, domain=0:2, samples=100,unbounded coords=jump]{1};
\end{axis}
\end{tikzpicture}
\caption{Ratios of the upper and lower bounds with respect to the stationary mean number of customers $\E(N_K)$ in the system, at equilibrium, for $K=5$ and $K=30$.}
\label{nbar_lower_upper_bound}
\end{figure}

\begin{remark}
For $K = \infty,~0 < \rho < 1$ (which is the condition for existence of a steady state),   using  Remark \ref{A(1/rho)}  of Section \ref{sec3} that comes next, one can show that
\begin{align*}
\E(N_\infty)\leq \frac{\rho\,(2-\rho)}{1-\rho}.
\end{align*}
This is, in fact, a tighter upper bound and has a simpler form than the one presented in \cite{Halfin-1}, which is valid only for $\frac{1}{2}\le\rho<1$ (see Figure~\ref{compare_halfin}).
\end{remark}

To prove this inequality we need an equivalent form of Lemma~\ref{lemma_bound} for the infinite capacity case, which is
\begin{equation} \label{PN_infty}
	\mathbb P(N_\infty=n) \leq (2- \rho)(1- \rho)\,\rho ^n,
\end{equation}
for $0 < \rho < 1,~n \geq 0$. Then, the result about $\E(N_\infty)$ is immediate.
To prove equation~\eqref{PN_infty}, we start from
\begin{equation*}
\mathbb P (N_\infty=n) = \left( \sum_{k = 0}^n \pi_\infty(0,k)\rho^{-k} \right)\rho^{n} ,
\end{equation*}
which can be found by following the same steps as in Lemma~\ref{lemma_bound}, but for  infinite capacity. 
Now, Remark \ref{A(1/rho)} in Section \ref{sec3} will show that  
\begin{equation*}
\sum_{k = 0}^{\infty} \pi_\infty(0,k)\rho^{-k}= (2-\rho)(1-\rho),	
\end{equation*}
from which equation~\eqref{PN_infty} immediately follows.
\qed

Regarding a lower bound for $\E(N_{\infty})$, it is easily shown that the coupling argument between JSQ and the $M/M/2$ queue extends to  infinite capacity. This yields the lower bound
 presented in \cite{Halfin-1}, which is given by $2\rho/(1-\rho^2)$.
Figure~\ref{compare_halfin} shows the ratios between the different bounds and $\E(N_\infty)$.

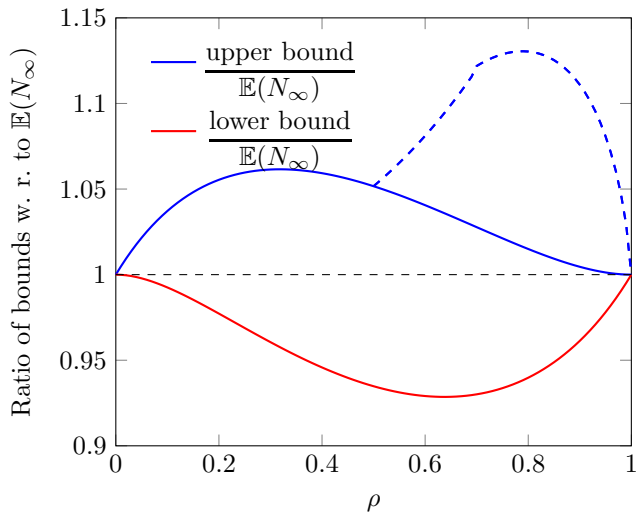
\begin{figure}[htbp]
\centering
\begin{tikzpicture}[scale=1.0]
\begin{axis}
[
area legend,
legend style={at={(0.05,0.97)},row sep=7pt,
anchor=north west, draw=none, fill=none},
ylabel={Ratio of bounds w.~r.~to $\E(N_{\infty})$},
xlabel={$\rho$},
xmin=0, xmax=1,
ymin=0.9, ymax=1.15
]
\legend{$\dfrac{\text{upper bound}}{\E(N_\infty)}$,
	$\dfrac{\text{lower bound}}{\E(N_\infty)}$}
\addplot[blue, line legend, thick] table
	[
		x expr = \thisrow{rho},
    	y expr = \thisrow{upper}
    ] {compare_halfin.txt};
\addplot[red, line legend, thick] table
	[
		x expr = \thisrow{rho},
    	y expr = \thisrow{lower}
    ] {compare_halfin.txt};
    \addplot[blue, dashed, line legend, restrict x to domain=0.5:1, thick] table
	[
		x expr = \thisrow{rho},
    	y expr = \thisrow{upperHalfin}
    ] {compare_halfin.txt};
\addplot[blue, dashed, line legend, restrict x to domain=0.5:1, thick] table
	[
		x expr = \thisrow{rho},
    	y expr = \thisrow{upperHalfin}
    ] {compare_halfin.txt};
\addplot[black, dashed, domain=0:1, samples=2,unbounded coords=jump]{1};
\end{axis}
\end{tikzpicture}
\caption{Ratios between the bounds and $\E(N_\infty)$. The dashed curve corresponds to the bound derived in \cite{Halfin-1}.}
\label{compare_halfin}
\end{figure}

\medskip
Note that    Adan et al.~\cite{adan1994upper} provide lower and upper bounds,  in the infinite capacity case, for the mean waiting time and mean number of customers.  These bounds result from  stochastic ordering of JSQ with two related systems: the shortest queue models with, respectively, threshold jockeying and threshold blocking.
The authors claim that those systems are easier to  analyze, using the matrix geometric method developed by Neuts, and provide numerical estimations.

     \subsection{The other stationary  probabilities.} \label{other}
   The Markov process $(L_1(t), L_2(t))_{t \ge 0}$ has the following particularity. Inside the upper triangle $\{ (j,k) \in \mathcal S _K,~ j \le k \}$, it has no upward jumps --or jumps to the north-- except from sites $(j,j)$ with $j < K$.  This makes it possible to solve the subsystem of balance equations at sites  $(j,k)$ with $j \le k-2$, in such a way as to express the stationary probabilities  $\pi _K(j,k)$ for $j \le k-1$  as a function only of the unknown  $\pi_K(0,k)$, $k=0, \dots , K$. Using the remaining balance equations, the stationary probabilities $\pi_K(j,j)$ are  derived in a similar form and the $\pi_K(0,k)$, $k= 0, \dots , K,$ are finally characterized recursively.
   
   In the sequel, the symbol $\ast$ will denote the discrete convolution product, defined for  any  complex-valued functions $\varphi$ and $\psi$ on $\N$ by
   \[ (\varphi \ast  \psi) (n) = \sum _{m=0} ^n \varphi(m)  \psi (n-m), \qquad n \in \N, \]
   and  for $k \ge 1$, $\varphi  ^{ \ast k} $  represents the $k$-fold convolution of $\varphi$ with itself. 
   The following relation will be used
 \begin{equation} \label{translation} \tau (\varphi \ast \psi) = (\tau \varphi ) \ast \psi + \varphi (0) \tau \psi, 
 \end{equation}
  for any $\varphi$ and $\psi$ on $\N$,
  where $\tau$ denotes the translation operator on $\C^{\N}$, defined by
\[ ( \tau \varphi) (n) = \varphi (n+1), \quad n \in \N. 
\]

   \begin{theorem} \label{pi_K} (i) For $0 \le j < k \le K$,
 \begin{equation}\label{pi_hors_diago} \pi _K( j,k)=  \sum _{l=k}^K  \pi_K(  0,l)  \Big ( g ^{\ast (l-k+1)} (j) - g ^{\ast (l-k+1)} (j+1)  \Big ) 
 \end{equation}
   and
     \[ \pi_K( k,k) = - \frac{1}{ \rho }   \sum _{l=k+1}^{K} \pi_K( 0,l)  \,  g ^{ \ast {(l-k)}}  (k+1) 
 \quad  0\le k < K, \] 
 
\noindent  where
\[ g(i) = -  \frac { \xi _+ ^i -  \xi _- ^i}{ \xi _+ - \xi _-}  \quad (i \in \N) \qquad \text {and } \quad    \xi _{\pm} = 1+ \rho \pm \sqrt {1+ \rho ^2} . \]

\smallskip
(ii) The stationary probabilities $\pi_K(0,l)$ for $l = 0, \dots ,K$ are characterized by the  following relations
\begin{equation} \label {pi(0,K)} \pi_K(0,K) ~=~\frac{\pi_K(K,K)}{2 \rho \left ( g(K-1)-g(K) \right ) }
\end{equation}
 and for $k=0, \dots , K-1$,
\begin{equation} \label  {recursion} \sum _{l=k}^{K} \pi_K(0,l) \, \left (g ^{\ast (l-k+1)} (k+2) -(2+ \rho) \, g ^{\ast (l-k+1)} (k+1)   \right ) =0 
\end{equation}

   \end{theorem}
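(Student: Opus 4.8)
The plan is to exploit the key structural feature already highlighted: inside the region $\{(j,k):j\le k-2\}$ the process has no upward transitions, so the second family of balance equations in \eqref{balance} becomes, for each fixed $k$, a second-order linear recursion in $j$ that couples only $\pi_K(j,k)$, $\pi_K(j+1,k)$, $\pi_K(j-1,k)$ and the already-known $\pi_K(j,k+1)$. First I would rewrite that recursion in the form
\[
\pi_K(j+1,k)-(2+\rho)\pi_K(j,k)+2\rho\,\pi_K(j-1,k) \;=\; -\,\pi_K(j,k+1),\qquad 1\le j\le k-2,
\]
wait---let me be careful with which equation applies; for $j=0$ one has a different (boundary) equation, and for $j=k-1,k$ the diagonal-feeding term $\rho\,\pi_K(j,j)\1_{\{k=j+1\}}$ enters. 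The homogeneous part has characteristic polynomial $X^2-2(1+\rho)X+(1+2\rho X)$... no: reading off the coefficients, the relevant quadratic is $X^2-2(1+\rho)X+2\rho=0$ with roots $\xi_\pm=1+\rho\pm\sqrt{1+\rho^2}$, which is exactly where $g$ comes from: $g$ is (up to sign and normalisation) the fundamental solution of the homogeneous recursion with $g(0)=0$, $g(1)=-1$. The natural move is therefore a generating-function / convolution bookkeeping: for fixed $k$, solve for the vector $(\pi_K(j,k))_j$ by convolving the source terms $(\pi_K(j,k+1))_j$ against $g$, and iterate downward from $k=K$ (where the "source from above" vanishes since there is nothing above row $K$), which produces the $(l-k+1)$-fold convolutions $g^{\ast(l-k+1)}$ appearing in \eqref{pi_hors_diago}. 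The telescoping difference $g^{\ast m}(j)-g^{\ast m}(j+1)$ reflects that it is really the increments of the convolution that solve the first-order-in-disguise structure; I expect the identity \eqref{translation} to be the workhorse for pushing the translation operator through the nested convolutions and keeping track of the boundary values $g^{\ast m}(0)$, $g^{\ast m}(1)$.

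Next I would handle the diagonal. The first family of balance equations in \eqref{balance} expresses $\pi_K(k,k)$ in terms of $\pi_K(k-1,k)$ and $\pi_K(k,k+1)$, both of which are now known linear combinations of the $\pi_K(0,l)$ via part (i)'s off-diagonal formula; substituting and simplifying (again via \eqref{translation}, using $g(0)=0$) should collapse the two contributions into the single stated expression $\pi_K(k,k)=-\rho^{-1}\sum_{l=k+1}^K\pi_K(0,l)\,g^{\ast(l-k)}(k+1)$. A sanity check I would run: at $k=K$ the sum is empty, giving $\pi_K(K,K)$ on the left unless... indeed the formula is only claimed for $0\le k<K$, so $\pi_K(K,K)$ stays as the externally-supplied datum from Theorem 1 — consistent.

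For part (ii): the relation \eqref{pi(0,K)} comes from the $k=K-1$ boundary equation (or equivalently the $j=0,k=K$ equation) together with the $k=K$ diagonal equation, where everything is expressed through $\pi_K(0,K)$ and $\pi_K(K,K)$ only, since the off-diagonal formula gives $\pi_K(0,K)=\pi_K(0,K)(g(0)-g(1))$... no, gives $\pi_K(K-1,K)$ in terms of $\pi_K(0,K)$; matching against the diagonal balance equation at $k=K$ yields the stated ratio. The recursion \eqref{recursion} is then exactly the leftover balance equation I have not yet used — the one at the boundary site $(0,k)$ (the first-family equation when $j=0$, or the $j=0$ case of the second family): substituting the part (i) expressions for $\pi_K(0,k)$, $\pi_K(1,k)$, $\pi_K(0,k+1)$ into it and collecting terms in the $\pi_K(0,l)$ should produce precisely the combination $g^{\ast(l-k+1)}(k+2)-(2+\rho)g^{\ast(l-k+1)}(k+1)$ — note the $(2+\rho)$ is the middle coefficient of the characteristic recursion, so what is left after cancelling the homogeneous part is the residual boundary discrepancy, and demanding it vanish is the content of \eqref{recursion}.

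The main obstacle will be the careful bookkeeping of boundary terms when iterating the convolution representation downward across rows and when switching between the off-diagonal and diagonal formulas: each application of \eqref{translation} spawns a $\varphi(0)\tau\psi$ correction, and one must verify that these corrections, together with the special treatment of $j=0$ and of the diagonal-feeding rate $\rho\,\pi_K(j,j)$, conspire to leave exactly the clean closed forms stated — in particular that no spurious dependence on $\pi_K(0,0),\dots,\pi_K(0,k-1)$ survives in $\pi_K(j,k)$. I would organise this as a downward induction on $K-k$, carrying as induction hypothesis the formula \eqref{pi_hors_diago} for all rows $\ge k$; the inductive step is then a single substitution into the second-family equation plus one invocation of \eqref{translation}. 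The verification that the resulting linear system \eqref{pi(0,K)}–\eqref{recursion} is consistent (and determines the $\pi_K(0,l)$) is not needed for the theorem as stated — it only \emph{characterizes} them — but is implicitly guaranteed by irreducibility.
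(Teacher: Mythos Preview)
Your plan for the off-diagonal formula \eqref{pi_hors_diago} is essentially the paper's: solve the row-$K$ homogeneous recursion, then iterate downward with $\pi_K(\cdot,k)=\pi_K(0,k)(g-\tau g)+\pi_K(\cdot,k+1)\ast g$, pushing $\tau$ through via \eqref{translation}. That part is fine.

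There are two genuine gaps, however.

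\textbf{The diagonal formula.} You propose to substitute the off-diagonal expressions for $\pi_K(k-1,k)$ and $\pi_K(k,k+1)$ into the balance equation at $(k,k)$. If you carry this out, the $l=k$ term of $\pi_K(k-1,k)$ contributes $2\rho\,\pi_K(0,k)\bigl(g(k-1)-g(k)\bigr)$, which does \emph{not} cancel: the resulting expression for $\pi_K(k,k)$ contains $\pi_K(0,k)$, whereas the stated formula involves only $\pi_K(0,l)$ for $l\ge k+1$. The paper avoids this by a different device: summing all balance equations over the square $\{0,\dots,k\}^2$ gives the clean identity $\rho\,\pi_K(k,k)=\sum_{j=0}^{k}\pi_K(j,k+1)$, from which the stated diagonal formula follows by telescoping (using $g^{\ast m}(0)=0$). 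Your route would at best produce an equivalent but messier expression, and the ``collapse'' you anticipate does not occur without importing \eqref{recursion}, which would be circular.

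\textbf{The source of \eqref{recursion}.} The balance equation at $(0,k)$ is \emph{not} leftover: it is precisely the boundary condition you already used to pin down the particular combination of $\xi_\pm^j$ in the off-diagonal solution at each row. The equations genuinely not consumed by part (i) are the sub-diagonal ones at $(k-1,k)$, where the term $\rho\,\pi_K(k-1,k-1)$ first couples the diagonal into the row recursion. The paper combines the square-sum identity with the $(k,k)$ balance equation to get an expression for $\pi_K(k-1,k)$, equates it with the off-diagonal formula at $j=k-1$, and simplifies via the convolution identity $g^{\ast(l+1)}(k+2)-2(1+\rho)g^{\ast(l+1)}(k+1)+2\rho\,g^{\ast(l+1)}(k)=-\mathbf{1}_{\{l>0\}}g^{\ast l}(k+1)$. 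Note also that $(2+\rho)$ is \emph{not} the middle coefficient of the characteristic recursion (that is $2(1+\rho)$); the coefficient $(2+\rho)$ in \eqref{recursion} emerges only after this further reduction.
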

   
   \begin{proof} The balance equation for each site $(j,k)$ with $j\le k-2$ is 
\begin{multline*}  \pi_K(j+1,k)       ~-~ \left (\1 _{\{j>0\}} + 1 + 2\rho \right )    \pi_K(j,k)   ~+~  2\rho \, \1 _{\{j>0\}} \,   \pi_K(j-1,k)\\
~= - \1 _{\{k<K\}} \,  \pi_K(j,k+1).
\end{multline*}

For  $k=K$, the right-hand side is zero and equations  at $1 \le j \le K-2$  amount to a homogeneous two-step linear recursion relation, which is solved in terms of the roots $\xi_{\pm}$ of the polynomial
$X^2 -2(1+ \rho) X + 2 \rho $. Using the boundary condition given by the balance equation at $(0,K)$, this finally yields,  for $0 \le j \le K-1$, 
\begin{multline} \label {top-line} \pi_K(j,K) ~=~  \pi _K(0,K) \, \frac {(\xi _+ -1) \xi _+ ^j + (1- \xi _-)  \xi _- ^j}{ \xi _+ - \xi _-} \\
=~ \pi _K(0,K)  \big ( g(j)-g(j+1) \big ) .  
\end{multline}
The value of $\pi_K(0,K)$ is actually known from $\pi_K(K,K)$, given by Theorem \ref{blocking}, together with the balance  equation at $(K,K)$ : $\pi_K(K,K) = 2 \rho\,  \pi_K(K-1,K)$ and formula  \eqref{top-line} at $j = K-1$. This leads to   \eqref {pi(0,K)}.

We now solve the balance equations at $(k,j)$, for some fixed $k <K$ and $0 \le j \le k-2$.
First,  from the definition of $g$ as a  linear combination of the two sequences $(\xi _+ ^j)$ and $(\xi _- ^ j)$ we have 
that 
\begin{equation} \label{fundamental} \tau ^2 g - 2(1+ \rho )  \, \tau g + 2 \rho \,  g ~=~0. 
\end{equation}
Convolution  by  arbitrary $\psi \in \C ^{ \N}$ yields
\begin{equation} \label{fundamental_bis}
\tau ^2 (g \ast \psi) - 2(1+ \rho )  \, \tau (g \ast \psi ) + 2 \rho \,  g \ast \psi ~=~  - \, \tau  \psi .\
\end{equation}
Here we have used relation \eqref{translation}, together with $g(0) = 0$ and $g(1) = -1$, which imply
 \[(\tau g) \ast \psi =  \tau (g \ast \psi)  \quad \text{   and } \quad 
(\tau ^2 g) \ast \psi =  \tau ^2 (g \ast \psi) + \tau \psi . \] 
In particular, for $\psi$ defined by 
\[ \psi (j) ~=~   \pi _K(j, k+1) \quad  \text {for } 0 \le j \le k-1,     \quad \text{arbitrary   for }  j  \ge k  , \]
we get  a two-step   linear recursion   for $  u \stackrel{def} {=} g \ast \psi$, or, abusing notation,
$ u = g \ast \pi_K( \cdot, k+1)$:
\[u (j+1) - 2(1+ \rho ) u(j) + 2 \rho u(j-1) ~=~ - \, \pi_K(j,k+1) \quad \text{ for } j = 1, \dots , k-2.\]
From the balance equations the same  relations hold with $\pi _K(\cdot \, , k )$ in place of $u$, so both functions restricted to $ \{ 0, \dots , k-1 \} $  must  differ   only by  some linear combination of the two sequences $(\xi _+ ^j)$ and $(\xi _- ^ j)$. Using the balance equation at $(0,k)$, one  finally gets, for $j = 0, \dots k-1$,  
\[\pi_K( j,k) ~=~ \pi_K( 0,k)    \Big (g(j) - g(j+1) \Big ) ~+~      \Big (   \pi_K(\cdot   , k+1) \ast g  \Big ) (j) ,\] 
or in other words,
\[\pi_K( \cdot \,  ,k) ~=~ \pi_K( 0,k)     ( g -  \tau g  ) ~+~       \pi_K(\cdot   , k+1) \ast g   .\] 
Iteration, together with  relation \eqref{translation}, yields for $k \in \{0, \dots , k-1\}$
\[   \pi( \cdot \, , k ) =    \pi(\cdot \, ,K) \ast g^{\ast (K-k)} + \sum _{l=k}^{K-1} \pi(0,l)\cdot  (g - \tau g) \ast g^{\ast (l-k)}.
\]
 Here  $g^{\ast (0)} (0) =1$ and $g^{\ast (0)}(j)=0 $ for $j \ge 1$. Finally using  \eqref{top-line}  proves   \eqref{pi_hors_diago}. 

 To derive the diagonal values $\pi _K(k,k)$, it is convenient to use the following relation
 \begin{equation} \label{diago} \pi _K (k,k) ~=~  \frac{1}{  \rho   }\, \sum _{j=0}^{k} \pi _K (j,k+1), \qquad  k = 0, \dots , K-1, 
 \end{equation}
 that results from summing up all balance equations in the square $\{ 0, \dots , k \} ^2$.
 Indeed, for any  subset $D$ of the state space $\mathcal S _K$, summation of the balance equations at sites in $D$ yields
  \[ \sum _{(l,l') \in D \times (\mathcal S _K \setminus D) } \pi _K (l) \, Q_K(l,l')
~=\sum _{(l,l') \in   (\mathcal S _K \setminus D) \times D } \pi _K  (l) \, Q_K(l,l'). \]
The  simple form of the relation for  $D = \{ 0, \dots , k \} ^2$ is due to the fact that there are only two jumps  to the outside of $D$, namely, from $(k,k)$ to $(k, k+1)$ and to $(k+1, k)$.
Relations \eqref{diago} and \eqref{pi_hors_diago} yield
\begin{multline*}  \pi _K (k,k) ~=~  \frac{  1}{\rho } \sum _{l=k+1}^K   \pi_K(  0,l)  \sum _{j=0}^{k}  \Big ( g ^{\ast (l-k)} (j) - g ^{\ast (l-k)} (j+1)  \Big )\\
=~   \frac{  1}{\rho } \, \sum _{l=k+1}^K   \pi_K(  0,l) \Big ( g ^{\ast (l-k)} (0) - g ^{\ast (l-k)} (k+1)  \Big ) ~=~ - \frac{1}{ \rho }   \sum _{l=k+1}^{K} \pi_K( 0,l)  \,  g ^{ \ast {(l-k)}}  (k+1) 
\end{multline*} 
since $g(0) = 0$ implies $g ^{\ast l} (0)=0$ for all  $l \ge 1$. Part (i) of the theorem is proved.

As for (ii), relation \eqref{pi(0,K)} has already been proved.  Next, using relation \eqref{diago} together with  the balance equation at $(k,k)$, one gets for $0 < k <K$,
\begin{equation} \label{subdiago} \pi _K (k-1,k) ~=~  \frac{1}{ 2 \rho ^2   }\, \left ( \pi_K(k, k+1) + (1+ \rho)  \sum _{j=0}^{k-1} \pi _K (j,k+1) \right )  .
 \end{equation}
 Now, using \eqref {pi_hors_diago} on both sides, we get (recall that $g ^{\ast l} (0)=0$ for  $l \ge 1$)
 \begin{multline*}
2 \rho ^2  \sum _{l=k}^K  \pi_K(  0,l)  \Big ( g ^{\ast (l-k+1)} (k-1) - g ^{\ast (l-k+1)} (k)  \Big ) \\
 ~=~ -  \sum _{l=k+1}^K  \pi_K(  0,l)  \Big (  \rho \,  g ^{\ast (l-k)} (k)   + g ^{\ast (l-k)} (k+1) \Big ), 
\end{multline*} 
 or, equivalently,
 \begin{multline*}
 \sum _{l=k}^K  \pi_K(  0,l) \Big ( 2 \rho ^2  \Big ( g ^{\ast (l-k+1)} (k-1) - g ^{\ast (l-k+1)} (k)  \Big )   \\
+ \1_  { \{ l>k\}} \Big ( \rho \,  g ^{\ast (l-k)} (k)  +  g ^{\ast (l-k)} (k+1) \Big )  \Big ) ~=~0. 
\end{multline*} 
 This relation can be rewritten as \eqref{recursion}  by using relation \eqref{fundamental_bis}, here with $ g ^{\ast l}$ for $l \ge 1$ in place of arbitrary $\psi$. Indeed,   \eqref{fundamental_bis}  together with  \eqref{fundamental} gives 
\begin{equation} \label{fundamental_ter} g ^{\ast (l+1)}(k+2)  - 2(1+ \rho)\,  g ^{\ast (l+1)} (k+1) + 2 \rho\,  g ^{\ast (l+1)}(k) ~=~- \1 _{ \{ l>0\}} \, g ^{\ast l}(k+1), 
\end{equation}
for all $k \in \N$.   It results that, for $ k >0$,
 \begin{multline*} - \1 _{\{ l>0\} } \left ( \rho   \, g ^{ \ast {l}}  (k) +  g ^{ \ast {l}}  (k+1)  \right ) \\
 =  \rho \,  g ^{\ast (l+1)} (k+1) -2 \rho   \,  (1+ \rho) \, g ^{\ast (l+1)} (k) +2 \rho ^2 \, g ^{\ast (l+1)} (k-1) \\
 + g ^{\ast (l+1)} (k+2) -2(1+ \rho) \, g ^{\ast (l+1)} (k+1) +2 \rho \, g ^{\ast (l+1)} (k)   \\
  =g ^{\ast (l+1)} (k+2) -(2+ \rho) \, g ^{\ast (l+1)} (k+1)  +2 \rho ^2 \left ( g ^{\ast (l+1)} (k-1)  -  g ^{\ast (l+1)} (k)  \right ) . 
  \end{multline*}
  We then derive \eqref{recursion}. The proof of the theorem is  complete. 
\end{proof}

  \begin{remark}
  The functional equation \eqref{A_K} provides an alternative to (ii) of Theorem~\ref {pi_K} by characterizing the sequence $(\pi(0,k), 0 \le k \le K)$ through its generating function $A_K$.  Indeed, since $A_K$ is a degree $K$ polynomial, it is determined by its values at $K+1$ different points. Now, as will be clear in the next section, one can build an infinite sequence $(v_n, n \ge 1)$, in   which all terms  are different, and  such that
\[ v_1 = \frac{1}{\rho}, \quad   v_2 = 1 \quad \text{ and for } \   n\ge 1, \  v_n \text{ and }  v_{n+1} \text{ are the roots of some } p_x.\]
  Moreover, $\phi(v_{n+1}, v_n) \neq 0$ for all $n \ge 1$. So, from the initial value 
  $ A_K(1/ \rho)   = \rho ^{-2K} \pi_K(K,K)$, where $\pi_K(K,K)$ is given by Theorem \ref{blocking},  iterating equation \eqref{A_K}   determines recursively all the $A_K(v_n)$, hence $A_K$.
 \end{remark}

\smallskip
To make our results as explicit as possible, we now give expressions for the convolution powers $g^{ \ast k}$.

A first expression of $g^{ \ast k}$ for $k \ge 1$ can be obtained from  $\   g = ( \xi _- - \xi _+) ^{ -1} ( h_+ - h_-)$, 
where
$h_+$ and $h_-$ denote  the  following elementary functions on  $\N$ 
\[ h_+(j)=  \xi _+ ^j \qquad \text{ and  }  \qquad h_-(j)=  \xi _- ^j, \qquad j \in \N. \]
We get, for $k \ge 1$,
 \[  g^{ \ast k} = ( \xi _- - \xi _+) ^{ -k}   \sum _{l=0}^k  (-1) ^l  {k \choose l}  h_+^{ \ast (k-l)} \ast h_-^{ \ast l}. \]
Now  for $k \ge 1$, $ h_{\pm}^{ \ast k}$ can be formulated explicitly  as 
\begin{equation} \label {h_ast} h_+^{ \ast k}  (j) = {j+k-1 \choose k-1} \, \xi _+ ^j \quad \text{ and  }  \quad  h_-^{ \ast k}  (j) = {j+k-1 \choose k-1} \, \xi _- ^j,     \  \quad  j \in \N,
 \end{equation} 
as is easily proven recursively,  using the relation
\[ \sum _{i=0}^j  {i+p \choose p} =  {i+p+1  \choose p+1 } , \qquad  j,p \in \N. \] 
  As a result, for  $k \ge 1$ and  $j \in \N$,
   \begin{multline*} g^{ \ast k}(j)  = ( \xi _- - \xi _+) ^{ -k}  \left ( {j+k-1 \choose k-1} \, \xi _+ ^j  + (-1) ^k  {j+k-1 \choose k-1} \, \xi _- ^j  \right .   \\  \left .  +  \sum _{l=1}^{k-1}  (-1) ^l  {k \choose l}   \sum _{i=0}^{j } {i+k-l-1 \choose k-l-1} \, {j-i+l-1 \choose l-1} \,  \xi _+ ^i  \xi _- ^{j-i}  \right )  . 
   \end{multline*} 
   
   \medskip
   A more concise expression can be obtained by using the alternative relation 
      \[ g(j) =  - ( h_+ \ast h_- ) (j-1) \quad \text {for } j \ge 1  \qquad \text { and  }  \quad g(0) = 0 .\]

   Indeed for $j \ge 1$,  
   \[  g(j) =  - ( \xi _+ - \xi _-) ^{-1}  ( \xi _+ ^j - \xi _-^j)  =  - \sum _{i=0}^{j-1} \xi _+^i \xi _-^{j-1-i} =  - \sum _{i=0}^{j-1} h_+(i) h_-(j-1-i). \]
  
   \medskip
  Denote by $\sigma$ the  operator on $\C ^ {\N}$ defined   by
      \[  (\sigma f) (j) = f(j-1) \quad \text { for } \quad j \ge 1 \quad  \text {  and }    \quad (\sigma f) (0) = 0,\]
      for any complex-valued function $ f $  on $ \N$.  
   Then $g$ can be written as 
   \[ g = - \sigma \, ( h _+ \ast h _-). \] 
   Note that $\sigma $ commutes with convolution :  $\sigma (\varphi  \ast  \psi ) = (\sigma \varphi)\ast  \psi$ for any $\varphi$ and $\psi$ defined on $\N$. One then gets
   \[  g^{\ast k} = (-1) ^k  \sigma ^{k} (h _+ ^{\ast k} \ast h _- ^{\ast k}), 
   \]
 that is,
 \[ g^{\ast k}(j)  =   \left \{ \begin{array}{ll} (-1) ^k  (h _+ ^{\ast k} \ast h _- ^{\ast k})(j-k) & \text{ for } j \ge k  \\  & \\    0 & \text { for } j <k.  \end{array} \right .  \] 
 Using   \eqref{h_ast}  finally gives 
  \[   \  g^{\ast k}(j)  =  (-1) ^k   \, \sum _{i=0}^{j-k }  {i+k-1 \choose k-1}  {j-i-1 \choose k-1}  \, \xi _+ ^i \, \xi _- ^{j-k-i}  \quad  \text{ for } j \ge k . 
 \]

\section{The infinite capacity model  }\label{sec3}
The model with  two infinite capacity queues is now considered.  Here, no rejection can occur. 
The  queue-length process is Markov with state space $ \N^2$.  Its $Q$-matrix coincides at each $(j,k) \in \N^2$ with any of the matrices $Q_K$ with $K > \max (j,k)$.   The process is ergodic under the condition $\rho <1$, which will be assumed to be satisfied. 
By symmetry of the dynamics, the invariant distribution $\pi$ satisfies 
\[ \pi(j,k) ~=~\pi(k,j) \quad \text{ for } j,k \in \N, \]
together with the following set  of reduced balance equations
 \begin{align} \label{balance_infty}    \begin{cases}   
   \big (\1 _{\{k>0\} } + \rho  \big ) \,    \pi (k,k) =      2 \rho \, \1 _{ \{k>0 \}} \,   \pi(k-1,k) +   \pi (k,k+1), \  \   k \ge 0,  \medskip \\  
 \left (\1 _{\{j>0\}} + 1 + 2\rho \right )    \pi (j,k) ~=~     2\rho \, \1 _{\{j>0\}} \,   \pi_K(j-1,k) ~+~  \pi (j+1,k) \\
\hspace{25mm}     ~+~    \pi (j,k+1)  ~+~ \rho \,  \1 _{\{k= j+1\}} \pi  (j,j), \quad  0  \le j <k .
\end{cases}
    \end{align}  
    
    The following extension of Theorem~\ref{pi_K}  holds.
     \begin{theorem} \label{pi} The invariant distribution  $\pi$ satisfies the following.
\[ \pi(j,k ) =   \sum _{l=k}^{2k-1} \pi(0,l) \cdot  \left (  g^{\ast (l-k+1)}(j)  -   g^{\ast (l-k+1)} (j+1) \right )   \quad \text{ for  }  \   0 \le j < k ,\]
\[\pi(k,k) =  - \frac{1}{ \rho }   \sum _{l=k+1}^{2k+1} \pi(0,l)  \,  g ^{ \ast {(l-k)}}  (k+1)  \quad \text{ for  } \   k \in \N .\]
 \end{theorem}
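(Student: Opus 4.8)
The plan is to carry over the proof of Theorem~\ref{pi_K}(i) almost word for word, the only genuinely new ingredient being a truncation argument that replaces the use of the ``top line'' \eqref{top-line}. First I would establish the one-step recursion. Exactly as in the finite case, inside the upper triangle $\{j\le k\}$ the only upward transitions start from the diagonal, so the balance equations \eqref{balance_infty} at the sites $(j,k)$ with $0\le j\le k-2$ become, after convolution with $g$ and use of \eqref{fundamental_bis}, a two-step linear recursion in $j$ with inhomogeneous term $-\pi(\cdot,k+1)$; the same recursion is satisfied by $g\ast\pi(\cdot,k+1)$, the two therefore differ by a solution of the associated homogeneous recursion, and the balance equation at $(0,k)$ fixes that solution. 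This yields, for every $k\ge1$,
\begin{equation}\label{rec_infty}
\pi(\cdot,k)=\pi(0,k)\,(g-\tau g)+\pi(\cdot,k+1)\ast g\qquad\text{on }\{0,\dots,k-1\}.
\end{equation}
Since $g^{\ast p}$ vanishes on $\{0,\dots,p-1\}$, the right-hand side of \eqref{rec_infty} evaluated at $j\le k-1$ involves $\pi(\cdot,k+1)$ only at arguments $\le j$, so iterating \eqref{rec_infty} never leaves the range where it is valid.

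Next I would iterate \eqref{rec_infty} a finite number $N$ of times, obtaining
\begin{equation*}
\pi(\cdot,k)=\sum_{m=0}^{N-1}\pi(0,k+m)\,(g-\tau g)\ast g^{\ast m}+\pi(\cdot,k+N)\ast g^{\ast N}.
\end{equation*}
Using $g(0)=0$ and \eqref{translation} one gets $(\tau g)\ast\varphi=\tau(g\ast\varphi)$, hence $(g-\tau g)\ast g^{\ast m}=g^{\ast(m+1)}-\tau\bigl(g^{\ast(m+1)}\bigr)$, which at $j$ equals $g^{\ast(m+1)}(j)-g^{\ast(m+1)}(j+1)$ and vanishes as soon as $m\ge j+1$; likewise $\bigl(\pi(\cdot,k+N)\ast g^{\ast N}\bigr)(j)=\sum_{i=0}^{j}\pi(i,k+N)\,g^{\ast N}(j-i)=0$ whenever $N>j$. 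Taking $N=k$, which exceeds $j$, and writing $l=k+m$, the surviving terms are precisely those with $k\le l\le j+k\le 2k-1$, so
\begin{equation*}
\pi(j,k)=\sum_{l=k}^{2k-1}\pi(0,l)\bigl(g^{\ast(l-k+1)}(j)-g^{\ast(l-k+1)}(j+1)\bigr),\qquad 0\le j<k,
\end{equation*}
as claimed.

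For the diagonal values I would proceed as for \eqref{diago}: summing the (full) balance equations over the square $\{0,\dots,k\}^2$, whose only outgoing arrows leave the corner $(k,k)$, gives $\rho\,\pi(k,k)=\sum_{j=0}^{k}\pi(j,k+1)$. Substituting the off-diagonal formula just obtained with $k+1$ in place of $k$, and telescoping via $\sum_{j=0}^{k}\bigl(g^{\ast m}(j)-g^{\ast m}(j+1)\bigr)=-g^{\ast m}(k+1)$ (valid for $m\ge1$ since $g^{\ast m}(0)=0$), one arrives at $\pi(k,k)=-\rho^{-1}\sum_{l=k+1}^{2k+1}\pi(0,l)\,g^{\ast(l-k)}(k+1)$.

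The point requiring care is the very passage to infinite capacity: a priori one would expect an infinite sum over $l\ge k$, hence a convergence issue and a need for tail estimates on $\pi$. What makes the argument clean is that the remainder $\pi(\cdot,k+N)\ast g^{\ast N}$ vanishes \emph{identically} once $N>j$, purely by the support property of $g^{\ast N}$ --- no decay of $\pi$ is invoked, ergodicity for $\rho<1$ being used only to guarantee that $\pi$ exists and solves \eqref{balance_infty}. So the expressions are genuinely finite, and the theorem follows.
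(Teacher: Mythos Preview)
Your proof is correct and follows essentially the same route as the paper's: establish the one-step recursion $\pi(\cdot,k)=\pi(0,k)(g-\tau g)+\pi(\cdot,k+1)\ast g$ on $\{0,\dots,k-1\}$, iterate it finitely many times, and use the support property $g^{\ast N}(j)=0$ for $j<N$ to kill both the remainder term and the extra summands, then derive the diagonal values from relation~\eqref{diago}. Your closing remark that no decay of $\pi$ is needed---only existence via ergodicity---is exactly the point the paper makes when it says ``taking the limit as $K\to\infty$ is simple.''
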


\begin{proof}
The balance equations at  $(j,k)$ with $0 \le j \le k-2$ are the same as for  finite $K$ with  $K >k$. Thus, the same recursion relation as in the proof of Theorem~\ref{pi_K}  :
\[\pi( j,k) = \pi( 0,k)    \Big (g(j) - g(j+1) \Big ) +      \Big (   \pi (\cdot   , k+1) \ast g  \Big ) (j)  \] 
 holds for $j = 0, \dots k-1$.
 As in the finite capacity case, this relation can be iterated from some fixed level $k$, here up to  any $K >k$. One gets 
  \[   \pi( \cdot \, , k ) =    \pi(\cdot \, ,K) \ast g^{\ast (K-k)} + \sum _{l=k}^{K-1} \pi(0,l)\cdot  (g - \tau g) \ast g^{\ast (l-k)} \quad  \text{on } \{0, \dots , k-1\}.
\]
    Now, taking the limit as $K \to \infty$ is simple, since  $g^{\ast m}(j) = 0 $ if $ j<m$, hence
   \[ (g - \tau g)  \ast g^{\ast m}(j) = 0 \quad \text{if} \quad 0 \le j<m.\]
 Actually, for  $K\ge 2k\, $ (so that $K-k > k-1$),  the expression  reduces to 
\begin{align*}
 \pi(j,k ) &=    \sum _{l=k}^{2k-1} \pi(0,l)\cdot  \left ( (g - \tau g) \ast g^{\ast (l-k)} \right ) (j)\\
& =  \sum _{l=k}^{2k-1} \pi(0,l) \cdot  \left (  g^{\ast (l-k+1)}(j)  -   g^{\ast (l-k+1)} (j+1) \right )  
\end{align*}
  for $  0 \le j < k$.
  Alternatively, one can equivalently write
  
\[  \pi(j,k ) =   \sum _{l=k}^{\infty} \pi(0,l) \cdot  \left (  g^{\ast (l-k+1)}(j)  -   g^{\ast (l-k+1)} (j+1) \right )  
\]
or 
\[\pi(j,k ) =     \sum _{l=k}^{j+k} \pi(0,l) \cdot  \left (  g^{\ast (l-k+1)}(j)  -   g^{\ast (l-k+1)} (j+1) \right ) . 
 \]
  The second part of the theorem is derived, as for finite $K$, from relation~\eqref{diago}, that  also holds  here, for $\pi$ and any $k \in \N$. 
  
 \end{proof}

There is no equivalent here to part (ii) of Theorem~\ref{pi_K}, because there is no ``top-value" $\pi(0, K)$ to start from. Actually,  if one proceeds as in the proof of \eqref{recursion},  the resulting system of equations that relate together the $\pi(0,k)$'s  no longer determines those values uniquely --it even has  an  infinite-dimensional set of solutions.

Nevertheless, the stationary probabilities $\pi(0,k)$ have been characterized in the literature, through their generating function  
    \[A(y) ~=~\sum _{k=0} ^{ \infty} \pi(0, k) \, y^k, \qquad y \in \C . \]
    The  most explicit formulation of $A(y)$ is given in \cite{cohen1995two}, through an infinite product.  It is recalled below as Theorem~\ref{cohen}. An original, simple  proof is  here moreover proposed,    that reduces the use of complex analysis tools to one uniqueness argument. 
    
     \begin{remark} \label{Kingman_asymptotics}
     The expressions for $\pi(j,k)$ for $j<k$  in Theorem \ref{pi}  have the following equivalent formulation:  Defining    $\   \,  \displaystyle{G(z) = \sum _{j=0}^{\infty } g(j) \, z^j = \frac {-z}{1- 2 (1+ \rho )z + 2 \rho z^2} }$ for $z \in \C$,
 the generating function given,  for fixed $k \ge 1$,  by
    $\   \,  \displaystyle{  \sum _{j=0}^{k-1} \pi(j,k)\,  z^j }\ \,  (z \in \C) $
     consists of the first  $k$ terms of the following generating function: 
     \[\sum _{j=0}^{\infty } z^j \sum _{l=k}^{2k-1} \pi(0,l) \cdot  \left (  g^{\ast (l-k+1)}(j)  -   g^{\ast (l-k+1)} (j+1) \right )  =   \frac {(z-1)G(z)}{z} \sum _{l=k}^{2k-1} \pi (0,l) \, G(z)  ^{l-k},\]
        or equivalently of 
    \[H_k(z) \, \stackrel{def}{=}\,  \frac {(z-1)G(z)}{z} \sum _{l=k}^{\infty} \pi (0,l) \, G(z)  ^{l-k},\]
    since  terms with index $l \ge 2k$ in $H_k(z)$ have a factor $z^k$, due to $ G(0)= g(0)  = 0$.
    From this, one can recover the asymptotics obtained by Kingman in \cite{Kingman-1}~: 
     \[\pi (j,k) \sim C (2+ \rho)^{j-k} \rho ^{2k} \quad \text  { and }\quad  \pi (k,k) \sim  C'  \rho ^{2k}, \]
    for two constants $C$ and $C'$, as $k, j$ grow to infinity with $j <k$. The second relation can be derived from the first one by using the balance equations at $(k,k)$. As for $\pi(j,k)$ for $j<k$, the asymptotic expression results from partial fraction decomposition of  the meromorphic continuation of $A$, for which the pole with smallest modulus is $(2+ \rho)/\rho ^2 $ (see \cite{Kingman-1,cohen1995two} or Theorem~\ref{cohen} below),  together with the following  computation:  
    \[\frac{G(z)}{z} \left (G(z)-   \frac{2+ \rho}{\rho ^2} \right) ^{-1} = \frac{\rho } {2(2+ \rho)}  \left (  \Big (z -    \frac{1} {2+ \rho} \Big )  \Big (z  -\frac{2+ \rho}{2\rho }  \Big ) \right )^{-1} , \]
    noting that $1/(2+ \rho )$  is the smaller pole in the last rational expression.
         
                \end{remark}

   \smallskip
    Classically,   the starting point of the analysis is a functional equation satisfied by $A$,
    that was first  derived in \cite{Kingman-1} and is stated below as Theorem~\ref{kingman}. It is analogous to equation \eqref{A_K} of the finite capacity case, for which we have  followed the same steps as  Kingman.  
Other authors (\cite{Flatto-1,cohen1995two}) rather use the relation between $A$ and $B$, where
     \[B(x) ~=~\sum _{k=0} ^{ \infty} \pi(k, k) \, x^k, \qquad x \in \C , \]
     that is the analogue for infinite capacity of our relation between $A_K$ and $B_K$  --from which  \eqref{A_K} was derived by eliminating $B_K$.
     But in our opinion, this approach makes things less readable.

    \begin{theorem} (Kingman) \label{kingman}
    
    (i) $A(y)$ is defined for all $y \in \C$ such that $|y| <1 + 2 \rho$.
    
   (ii) For  $x $ in some neighborhood of $0$ in $\C$, the roots $y,z$ of the polynomial $p_x$ defined in \eqref{p_x}
    satisfy  
    \begin{equation} \label{A}
      A(y) \, \phi(y,z)  =  A(z) \, \phi(z,y) , 
   \end{equation}
    where $\phi$ is given by \eqref{phi}. 
    \end{theorem}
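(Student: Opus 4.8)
The plan is to treat part~(ii) as the substantive content and part~(i) as an analytic preliminary, and to present~(i) first. For~(i), the point is that the radius of convergence of $A(y)=\sum_k\pi(0,k)y^k$ is at least $1+2\rho$, i.e.\ $\limsup_k\pi(0,k)^{1/k}\le(1+2\rho)^{-1}$. First I would invoke the bound $\mathbb P(N_\infty=k)\le(2-\rho)(1-\rho)\rho^k$ established after Proposition~\ref{proposition_Nbar}: since $\pi(0,k)\le\mathbb P(N_\infty=k)$, this already gives radius at least $1/\rho$, which settles the claim whenever $\rho\le1/2$ (then $1/\rho\ge1+2\rho$). For $1/2<\rho<1$ a sharper geometric rate is needed. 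Here I would use the representation of $\pi(j,k)$ in Theorem~\ref{pi} together with the near-diagonal balance equations to exhibit a finite-order linear recursion forcing $\pi(0,k)=O\bigl((\rho^2/(2+\rho))^k\bigr)$ — alternatively one simply imports Kingman's asymptotics $\pi(0,k)\sim C(\rho^2/(2+\rho))^k$ recalled in Remark~\ref{Kingman_asymptotics} — and observes that $\rho^2/(2+\rho)<(1+2\rho)^{-1}$ is equivalent to $(\rho-1)(2\rho^2+3\rho+2)<0$, which holds for all $\rho<1$. I expect this refinement for $\rho$ close to~$1$ to be the most delicate point.

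For part~(ii) I would follow verbatim the route by which~\eqref{A_K} was obtained in the finite-capacity case. Converting the reduced balance equations~\eqref{balance_infty} into a functional equation for $F(x,y)=\E(x^{L_1}y^{L_2-L_1}\1_{L_1\le L_2})$ gives the infinite-capacity analogue of~\eqref{F_K},
\[
\bigl(y^2-2(1+\rho)xy+(1+2\rho x)x\bigr)F(x,y)=y(y-x)A(y)-\bigl(\rho y^2+(1+\rho)y-1-2\rho x\bigr)xB(x),
\]
with $B(x)=\sum_k\pi(k,k)x^k$, i.e.\ exactly~\eqref{F_K} with the blocking term $\rho x^{K+1}y(y-1)\pi_K(K,K)$ dropped. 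Then, as before, whenever $p_x(y)=0$ the right-hand side vanishes; writing $y,z$ for the two roots of $p_x$, eliminating $B(x)$, and using $y+z=2(1+\rho)x$, $yz=(1+2\rho x)x$ and $p_x(y)=p_x(z)=0$ collapses the relation to $\phi(y,z)A(y)=\phi(z,y)A(z)$, the $K\to\infty$ form of~\eqref{A_K}. Alternatively — and perhaps more in the spirit of the paper — one may pass to the limit in~\eqref{A_K} directly: for $|x|$ small, $\pi_K(K,K)\to0$ and $x^K\to0$, while $A_K(y)\to A(y)$, $A_K(z)\to A(z)$ by dominated convergence (using $\pi_K(0,k)\le C\rho^k$ uniformly in $K$, which follows from Lemma~\ref{lemma_bound} and~\eqref{A_K(1/rho)}, together with $\pi_K(0,k)\to\pi(0,k)$), so~\eqref{A_K} degenerates to~\eqref{A}.

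The one technical caveat in~(ii), and the reason the statement is restricted to $x$ in a neighbourhood of $0$, is the domain of validity of these manipulations: $A$ and $B$ are known to converge only on disks, and the identities are being applied at the roots $y,z$ of $p_x$. Since $p_0(Y)=Y^2$ has a double root at $0$, both roots of $p_x$ tend to $0$ as $x\to0$; hence for $x$ in a small enough neighbourhood of $0$ they lie inside the disk $|y|<1+2\rho$ provided by part~(i), and $B(x)$ likewise converges (its radius is at least $1/\rho^2$). That is all that is needed to make the derivation rigorous; extending $A$ beyond this disk, and locating its poles and residues, is then carried out separately via the functional equation itself in the sequel.
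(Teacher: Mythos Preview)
Your treatment of~(ii) is correct and is exactly what the paper does: it simply says the proof ``is similar to that of equation~\eqref{A_K}'', i.e.\ rerun the finite-capacity generating-function argument with the blocking term absent, and restrict to small $|x|$ so that the roots of $p_x$ lie in the region where $A$ and $B$ are known to converge. Your remark on why a neighbourhood of $0$ suffices is also to the point.

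Part~(i), however, is circular as written. Every input you invoke for the decay of $\pi(0,k)$ is proved \emph{downstream} of Theorem~\ref{kingman}. The bound $\mathbb P(N_\infty=k)\le(2-\rho)(1-\rho)\rho^k$ rests on Remark~\ref{A(1/rho)}, which uses the functional equation~\eqref{A} itself together with the finiteness of $A(1/\rho)$ --- precisely the convergence you are trying to establish. Likewise the Kingman asymptotics in Remark~\ref{Kingman_asymptotics} use the identification of $(2+\rho)/\rho^2$ as the first pole of $A$, which comes out of Theorem~\ref{cohen} and hence out of Theorem~\ref{kingman}. Your limit-in-$K$ alternative for~(ii) has the same defect: the convergence $\pi_K(0,k)\to\pi(0,k)$ is only proved in the Corollary, via Lemma~\ref{uniquenessA}, after Theorem~\ref{kingman}.

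The paper's argument for~(i) avoids all of this and is completely elementary. Set $T_k=\sum_{j\ge0}\pi(j,j+k)$ and sum the balance equations over the wedge $\{(j,h):h-j\le k\}$ to obtain $(1+2\rho)T_{k+1}=T_k-\pi(0,k)$ for $k\ge1$. This gives $(1+2\rho)T_{k+1}<T_k$, hence $\sum_kT_k r^k<\infty$ for $0\le r<1+2\rho$, and since $\pi(0,k)\le T_k$ the radius of $A$ is at least $1+2\rho$. No case split on $\rho$ is needed, and nothing from later in the paper is used.
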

    
    \begin{proof} For   $k \in \N$, define
 $\  \displaystyle{T_k = \sum _{ j=0}^{\infty }  \pi(j,j+k) =   \sum _{ h-j =k}  \pi(j, h)  }$.

 By summing all balance equations in the domain $Ê\{ (j,h) \in \N^2, \, h-j \le k\}$ for  $k \ge 1$, one gets :
\begin{equation} \label{T} (1 +2 \rho ) \,  T_{k+1}  = T_k - \pi(0,k)  \quad \text {for} \quad k \ge 1, 
\end{equation}
from which it results that 
 \[(1+2 \rho ) \,  T_{k+1}  <~ T_k \quad \text {for} \quad k \ge 1,\]
  so that
  \[  \sum _{k=0}^{ \infty} T_k \, r^k < +\infty    \quad \text {if} \quad 0 \le r < 1+2 \rho.\]
  Now (i)  follows from inequalities $\pi(0,k) < T_k$ for $k\ge 1$, that also result from  \eqref{T}.

     The proof of (ii) --see \cite{Kingman-1}--  is similar to that of equation \eqref{A_K}. 
   \end{proof}

    Note that for $k=0$ equation~\eqref{T} is replaced by
    \[ (1+2 \rho)  \, T_1 = (1 + \rho) \, T_0  -  \pi(0,0) . \]
   Summing all equations from $k = 0$ to infinity yields
   \[ (1 + \rho) \, T_0  - A(1)   ~=~ 2 \rho \sum _{k=1}^{\infty}  \  T_k . \] 
   Besides,  since equation ~\eqref{diago} is still valid,  here for all $k \in \N$, with $ \pi$ in place of $\pi_K$, we first get by summation  over $k \in \N$
    \[ \sum _{ j = k } \pi(j,k) =   \frac {1}{ \rho}  \sum _{ 0 \le j < k } \pi(j,k) , \] 
    and then using  $\ \displaystyle{1 = \sum_{(j,k) \in \N^2}  \pi(j,k) =  \sum _{ j = k } \pi(j,k) + 2 \sum _{ 0 \le j < k } \pi(j,k)}  $\,, 
\[ T_0 =  \sum _{ k=0 } ^{ \infty}\pi(k,k) =   \frac {1 }{1+2 \rho}  \quad \text{ and } \quad  \sum _{k=1}^{\infty}  \  T_k =\sum _{0 \le j < k} \pi(j,k) =   \frac { \rho }{1+2 \rho} \, \cdot \] 
    Thus $A(1)$ is  determined and  given by
     \begin{equation} \label {A(1)} A(1) ~=~ 1- \rho. 
      \end{equation}

 In view of the next convergence result,  it is convenient to complete  Theorem \ref{kingman} with a uniqueness result which proof is contained in  the proof of Theorem \ref{cohen} that concludes this section.
  \begin {lemma} \label{uniquenessA}
   $A(y)$ is the unique analytic  function in the domain $|y| < 1+ 2 \rho $ that  satisfies (ii) of Theorem \ref{kingman} and
\[ A(1) ~=~ 1- \rho . \]      
 \end{lemma}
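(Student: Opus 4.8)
The plan is to obtain Lemma~\ref{uniquenessA} as a by\-product of the proof of Theorem~\ref{cohen}, which naturally splits into an \emph{existence} and a \emph{uniqueness} part. For existence, take $A^{*}$ to be the infinite product of Theorem~\ref{cohen}, normalised so that $A^{*}(1)=1-\rho$; this is admissible because $1-\rho\neq 0$ and (as one checks) $1$ is neither a zero nor a pole of the product. Two facts must be recorded. First, $A^{*}$ is holomorphic on $|y|<1+2\rho$: its poles are $(2+\rho)/\rho^{2}$ and its images under the correspondence attached to $p_{x}$, all of modulus at least $(2+\rho)/\rho^{2}$, and $(2+\rho)-\rho^{2}(1+2\rho)=(1-\rho)(2\rho^{2}+3\rho+2)>0$ for $\rho<1$, so $(2+\rho)/\rho^{2}>1+2\rho$. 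Second, $A^{*}$ satisfies relation (ii) of Theorem~\ref{kingman}: the product is built so that its zeros and poles come in pairs $\{\zeta,\zeta'\}$, with $\zeta,\zeta'$ the two roots of a common $p_{x}$, and this pairing is exactly what makes $A^{*}(y)\,\phi(y,z)=A^{*}(z)\,\phi(z,y)$ for every such pair --- this is the ``obvious solution'' observation, verified by pure algebra. Since the genuine $A$ satisfies (ii) by Theorem~\ref{kingman}(ii) and $A(1)=1-\rho$ by \eqref{A(1)}, once uniqueness is established we conclude $A=A^{*}$, which is precisely Theorem~\ref{cohen}.

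For uniqueness, let $\widetilde A$ be holomorphic on $|y|<1+2\rho$, satisfy (ii), and satisfy $\widetilde A(1)=1-\rho$, and set $R:=\widetilde A/A^{*}$, meromorphic on the disk with $R(1)=1$. Dividing relation (ii) for $\widetilde A$ by relation (ii) for $A^{*}$, the factors $\phi$ cancel and $R(y)=R(z)$ whenever $y,z$ are the two roots of $p_{x}$ with $x$ near $0$; that is, writing $\iota$ for the involution exchanging the two roots of $p_{x}$, the function $R$ is $\iota$\-invariant near $y=0$, hence $R=\widehat R\circ x$ there with $\widehat R$ holomorphic near $x=0$. The second ingredient is Kingman's continuation: $p_{x}(y)$ is also quadratic in $x$, so each $y$ is a root of $p_{x}$ for exactly two values of $x$; for $y$ outside the current domain, choosing the value of $x$ whose companion root lies where $R$ is already defined and propagating via $R(y):=R(z)$, one extends $R$ to a single\-valued meromorphic function on $\C$ that is invariant under the (infinite) group $G$ generated by the two deck transformations of $p_{x}(y)=0$.

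To conclude, $G$ moves $y=1$ along the chain $(v_{n})_{n\ge 1}$ of the Remark following Theorem~\ref{pi_K}: $v_{1}=1/\rho$, $v_{2}=1$, the $v_{n}$ are pairwise distinct, consecutive ones are the two roots of a common $p_{x}$, and $\phi(v_{n+1},v_{n})\neq 0$ at every step, so no step degenerates and $R$ is constant $=R(1)=1$ along the whole orbit. But a meromorphic function on $\C$ invariant under the infinite group $G$ cannot be non\-constant (its orbits accumulate, forcing it to be locally, hence globally, constant); therefore $R\equiv 1$, i.e.\ $\widetilde A=A^{*}$, which together with the existence part gives $A=A^{*}=\widetilde A$ and proves the lemma.

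The crux --- and essentially the only place where complex analysis is genuinely used --- is the continuation step: turning relation (ii), assumed only for $x$ near $0$, into a globally defined single\-valued meromorphic function invariant under the infinite group $G$, and then the one rigidity statement that such an invariant function must be constant. Everything else is algebraic bookkeeping with the quadratic $p_{x}$, with $\phi$, and with the chain $(v_{n})$. A minor point to dispatch along the way is that $A^{*}(1)\neq 0$ and $A^{*}$ has no pole at $1$, so that $R$ is well defined near $1$ with $R(1)=1$; this is immediate from $A(1)=1-\rho\neq 0$.
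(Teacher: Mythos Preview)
There is a genuine gap in your final rigidity step. The claim that ``orbits accumulate, forcing it to be locally, hence globally, constant'' is false as stated: the orbits of $G$ in the $y$-plane are precisely the chains $(y^{(n)})_{n\in\Z}$ of Lemma~\ref{chain}, and by that lemma $|y^{(n)}|\to\infty$ exponentially as $|n|\to\infty$, so they do \emph{not} accumulate in $\C$. (Compare the entire function $e^{z}$, invariant under the infinite group $z\mapsto z+2\pi i n$, yet non-constant.) The natural repair---passing to the uniformizing variable $\theta$ of~\eqref{parametric}---gives $R(y(\theta))$ two independent periods, namely $2\pi i$ (since $y$ depends on $e^{\theta}$) and the real number $\log\frac{a+b}{a-b}$ (from $G$-invariance); but a doubly periodic \emph{meromorphic} function is elliptic and need not be constant. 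To conclude you would have to show that the extended $R$ is in fact \emph{holomorphic}, i.e.\ that $\widetilde A$ vanishes, with at least the right multiplicity, at every zero $u_n$ of $A^{*}$ that enters the picture. This is the nontrivial point your sketch omits.

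By contrast, the paper's uniqueness argument (inside the proof of Theorem~\ref{cohen}) avoids global continuation entirely and works only in $D(0,1+2\rho)$. It uses the real ellipse $\mathcal E=\{a-a\cos t+i\,b\sin t:\,t\in\R\}$ (i.e.\ $\theta\in i\R$ in~\eqref{parametric}), which lies in $D(0,1+2\rho)$ and has the property that $(y,\bar y)\in\mathcal C$ for all $y\in\mathcal E$. The functional equation then gives $R(y)=R(\bar y)$ on $\mathcal E$, where $R=A/\Pi$. On the interior $E$ of the ellipse (where $\Pi$ has no zeros, since its zeros $u_n$ are real and $\le -(1+\rho)/\rho$), the maps $y\mapsto R(y)$ and $y\mapsto R(\bar y)$ are respectively holomorphic and antiholomorphic, hence both harmonic; they agree on $\partial E=\mathcal E$, so by uniqueness of the Dirichlet problem they agree on $E$. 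Thus $R$ is simultaneously holomorphic and antiholomorphic on the connected open set $E$, hence constant there, hence everywhere it is defined. This sidesteps precisely the pole issue that blocks your approach.
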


\medskip
The following result is a consequence of Theorem \ref{pi_K}, Theorem \ref{pi} and Lemma \ref{uniquenessA}. Here,   $\pi _K$ (for $K \in \N$) is  considered as  a measure on $\N ^2$.

\begin{corollary} For $\rho <1$, 
 $\pi _K$ converges weakly  to  $\pi$  as $K$ goes to infinity. 

\end{corollary}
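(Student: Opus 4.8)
The plan is to reduce the statement to the convergence of the ``boundary'' probabilities, $\pi_K(0,l)\to\pi(0,l)$ for each fixed $l$, and then to obtain the latter from the functional equation \eqref{A_K} together with the uniqueness statement of Lemma~\ref{uniquenessA}. For the reduction, note that since $g^{\ast m}(i)=0$ whenever $i<m$, the sums over $l$ in Theorem~\ref{pi_K} actually range only over $k\le l\le j+k$ (for $j<k$) and over $k+1\le l\le 2k+1$ on the diagonal, with coefficients independent of $K$; hence, as soon as $K$ exceeds $j+k$ (resp.\ $2k+1$), $\pi_K(j,k)$ is the \emph{same} finite linear combination of the numbers $\pi_K(0,l)$ that, by Theorem~\ref{pi}, $\pi(j,k)$ is of the $\pi(0,l)$. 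So if $\pi_K(0,l)\to\pi(0,l)$ for every $l$, then $\pi_K(j,k)\to\pi(j,k)$ for every $(j,k)$; since $\pi_K$ and $\pi$ are probability measures on $\N^2$, Scheff\'e's lemma upgrades this pointwise convergence of mass functions to convergence in total variation, and \emph{a fortiori} weak convergence.

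To prove $\pi_K(0,l)\to\pi(0,l)$, I would first record a uniform-in-$K$ bound on $\pi_K(0,l)$. Lemma~\ref{lemma_bound} gives $\pi_K(0,l)\le\mathbb P(N_K=l)\le\rho^{-2K}\pi_K(K,K)\,\rho^l$, and $\rho^{-2K}\pi_K(K,K)=2\big(2\sum_{k=0}^{2K}\rho^k-\sum_{k=0}^{K-1}(\rho/2)^k\big)^{-1}$ is bounded above by $2$ uniformly in $K$; alternatively one gets the sharper bound $\pi_K(0,l)\le(1+2\rho)^{1-l}$ from the finite-capacity analogue $(1+2\rho)T^K_{k+1}=T^K_k-\pi_K(0,k)$, $1\le k\le K-1$, of Kingman's flux identity \eqref{T}, with $T^K_k=\sum_{j\,:\,j+k\le K}\pi_K(j,j+k)$. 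In either case the polynomials $A_K$ are bounded on compact subsets of a fixed disk $\{\,|y|<R\,\}$ with $R>1$ (for instance $R=1+2\rho$), so by Montel's theorem every subsequence of $(A_K)$ has a further subsequence $(A_{K_n})$ converging locally uniformly on $\{\,|y|<R\,\}$ to an analytic function $\widetilde A$; equivalently $\pi_{K_n}(0,l)\to a_l$ for each $l$ with $\widetilde A(y)=\sum_{l\ge0}a_l\,y^l$.

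Next I would identify $\widetilde A$ with $A$. Letting $K_n\to\infty$ in \eqref{A_K(1)}, $A_{K_n}(1)=1-\rho\big(1-\pi_{K_n}(K_n,K_n)\big)$, and using $\pi_K(K,K)\to0$ for $\rho<1$ (Theorem~\ref{blocking}), gives $\widetilde A(1)=1-\rho$. Fixing $x$ in a small enough neighbourhood of $0$ that both roots $y,z$ of $p_x$ lie in $\{\,|y|<R\,\}$ and $|x|\rho^2<1$, and letting $K_n\to\infty$ in \eqref{A_K}, the left-hand side $\phi(y,z)A_{K_n}(y)-\phi(z,y)A_{K_n}(z)$ tends to $\phi(y,z)\widetilde A(y)-\phi(z,y)\widetilde A(z)$ while the right-hand side $(1+\rho)x^{K_n}(y-z)\pi_{K_n}(K_n,K_n)$ tends to $0$, since $|x^{K_n}\pi_{K_n}(K_n,K_n)|\le 2(|x|\rho^2)^{K_n}\to0$. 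Thus $\widetilde A$ is analytic in $\{\,|y|<R\,\}$, satisfies (ii) of Theorem~\ref{kingman} and $\widetilde A(1)=1-\rho$, so Lemma~\ref{uniquenessA} forces $\widetilde A=A$, i.e.\ $a_l=\pi(0,l)$ for all $l$. As every convergent subsequence of $(A_K)$ has the same limit $A$, this yields $\pi_K(0,l)\to\pi(0,l)$ for each $l$, and the first paragraph finishes the proof.

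I expect the main obstacle to be the uniform-in-$K$ control of $\pi_K(0,l)$ with a decay rate sharp enough to place $\widetilde A$ in the analyticity class of Lemma~\ref{uniquenessA}: the bound from Lemma~\ref{lemma_bound} already suffices when $\rho<1/2$ (then $1/\rho>1+2\rho$), but for $\rho\ge1/2$ it is cleanest to verify the finite-capacity flux identity, which requires checking that the artificial boundary $\{h=K\}$ carries no net flux across the diagonal strips $\{h-j\le k\}$. A secondary point is the legitimacy of the passage to the limit in \eqref{A_K}, handled by the normal-family argument above. One could also bypass the functional equation entirely: the uniform tail bound of Lemma~\ref{lemma_bound} makes $(\pi_K)$ tight, every weak subsequential limit then satisfies the balance equations \eqref{balance_infty} and so is invariant for the infinite-capacity chain, which is ergodic when $\rho<1$ --- but that route leaves the framework used here.
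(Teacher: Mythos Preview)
Your proposal is correct and structurally identical to the paper's proof: reduce to $\pi_K(0,l)\to\pi(0,l)$ via the finite-sum representations of Theorems~\ref{pi_K} and~\ref{pi}, establish a uniform-in-$K$ geometric tail bound on $\pi_K(0,\cdot)$, extract a subsequential limit, pass to the limit in \eqref{A_K} and \eqref{A_K(1)}, and invoke Lemma~\ref{uniquenessA}.

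The one tactical difference worth noting concerns precisely your self-identified obstacle. The paper obtains the uniform bound $\pi_K(0,l)\le M(1+2\rho)^{-l}$ not from a flux identity but by evaluating \eqref{A_K} at $x=1$, where the roots of $p_x$ are $1$ and $1+2\rho$; this gives
\[
A_K(1+2\rho)=\frac{(1+\rho)A_K(1)-2\rho^2\pi_K(K,K)}{(1-\rho)(1+2\rho)},
\]
which is bounded uniformly in $K$ since $A_K(1)\to1-\rho$ and $\pi_K(K,K)\to0$. This is cleaner than verifying the finite-capacity version of \eqref{T} (which, incidentally, does hold without boundary correction for $1\le k\le K-1$, since the top edge $\{h=K\}$ carries the same transition rates as the interior). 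For compactness the paper uses Prokhorov on the normalized measures $q_K=\pi_K(0,\cdot)/A_K(1)$ and then Fatou to recover analyticity on $|y|<1+2\rho$, whereas your Montel argument achieves the same in one stroke; both are fine.
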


\begin{proof}  Assuming that $\rho <1$, we prove that for each $(j,k) \in \N ^2$, $ \pi _K(j,k)$ converges to $ \pi (j,k)$ as $K$ tends to infinity. In view of Theorem \ref{pi_K} (i)  and Theorem \ref{pi}, it is enough to show that  for each  $l \in \N$, $ \lim _{K \to \infty} \pi _K(0,l)= \pi (0,l)$. Note indeed that since $g^{\ast m}(j) = 0 $ for $ j<m$, Theorem  \ref{pi_K} (i) can be written, for   $ 0 \le j < k \le K$,  as
\[\pi _K (j,k ) =     \sum _{l=k}^{ (2k-1) \wedge K} \pi _K(0,l) \cdot  \left (  g^{\ast (l-k+1)}(j)  -   g^{\ast (l-k+1)} (j+1) \right ) ,\]
\[ \text{and} \quad \pi _K(k,k) =  - \frac{1}{ \rho }   \sum _{l=k+1}^{(2k+1) \wedge  K} \pi _K(0,l)  \,  g ^{ \ast {(l-k)}}  (k+1)  \quad \text{ for} \quad  0 \le k \le K . \]
We now set $\pi _K(0,l) = 0$ for $l>K$ and define,  for $ K \in \N$, the  probability measure $q_K$ on $\N$  by 
\[ q_K(l) = \frac{\pi _K(0,l)}{A_K(1)} \qquad (l \in \N). \]
Using equation \eqref{A_K} with $x= 1$ (for which the roots of $p_x$ are $1$ and $1+2 \rho$) yields
\[ A_K (1+2 \rho) = \frac { (1+ \rho ) A_K(1) - 2 \rho ^2 \pi_K(K,K)}{(1- \rho) (1 + 2 \rho)}. \]
Together with equation \eqref {A_K(1)} and $\lim _{K \to \infty} \pi_K(K,K) = 0$ for $\rho <1$ (see Proposition \ref{asymptotics} in Section 2.2), this gives the following limits:
\[ \lim _{K \to \infty} A_K(1) = 1- \rho \qquad \text{and} \qquad   \lim _{K \to \infty} A_K(1+2 \rho) = \frac { (1+ \rho )}{ (1 + 2 \rho)}. \]
 In particular, one gets    $\, M  \stackrel{def}{=}  \sup _{K} A_K(1+2 \rho) < \infty \,$ and $\  \delta   \stackrel{def}{=} \inf _K     A_K(1) >0$. Then,
$ q_K(l) \le  (1+ 2 \rho ) ^{-l}   A_K(1+2 \rho)  / A_K(1) \le  \frac{M}{\delta} (1+ 2 \rho ) ^{-l}$ for $K,l \in \N$,  so that
   \[\lim _{L \to \infty} \sup _K \sum _{l=L}^ {\infty} q_K(l) =0.  \]
It then results from Prokhorov's theorem that the sequence  of probability measures $(q_K)$ is tight.

Now consider any weakly converging subsequence of $(q_K)$  and denote by $q$ its limit.  For simplicity, we abusively denote $q_K$ the generic term of this subsequence. Weak convergence implies that for $z \in \C$ with $|z| <1$,
\[  Q(z) ~\stackrel{def}{=}~ \sum _{l=0}^\infty q (l)\,  z^l  ~=~  \lim _{K \to \infty } \sum _{l=0}^\infty q_{K} (l)\,  z^l ~=   \lim _{K \to \infty }\frac{A_K(z)}{A_K(1)} , \]
and so, for $|z|<1$,  $ \lim  _{K \to \infty } A_K(z) = (1- \rho) \, Q(z) $.  Taking the limit $K \to \infty$ in equation \eqref{A_K}  and using again   $\lim _{K \to \infty} \pi_K(K,K) = 0$ then gives 
\[ \phi(y,z)   Q(y) - \phi(z,y) Q(z) = 0\] 
if   $|y| <1, |z|<1$ and $y,z$  are the roots of some polynomial $p_x$. Now $Q$ is analytic in the domain $|z| <1+2\rho$, since  Fatou's lemma gives 
\[ \sum _{l=0}^{\infty} q(l) (1+2 \rho ) ^l  \le  \varliminf _{K\to \infty}  \sum _{l=0}^{\infty} q_K(l) (1+2 \rho ) ^l  =\varliminf _{K\to \infty}  \frac{ A_K(1+2 \rho)}{A_K(1)}  = \frac { (1+ \rho )}{(1-\rho)  (1 + 2 \rho)}  < \infty .\]
The function $(1- \rho) Q$ then satisfies the two conditions of Lemma \ref{uniquenessA} that characterize $A$. So $(1- \rho) Q = A$, that is, $(1- \rho)^{-1} \pi$ is the only possible limit of a subsequence of $(p_K)$. By the tightness property, it results that $(p_K)$ converges to  $(1- \rho)^{-1} \pi$, or else, that $(\pi _K)$ converges to  $\pi$.

 \end{proof}

    \medskip
   Some notations and preliminary  observations are now required in order to formulate Cohen's result.
   First define
    \begin{multline*} \mathcal C = \{ (y,z) \in \C^2,  \ \exists \, x \in \C,  \  p_x \text{ has roots } y \text { and } z \} \\
    = \{ (y,z) \in \C^2,  \ \exists \, x \in \C, \,  y+z = 2( 1 + \rho ) x  \text{ and } yz = (1+2 \rho x) x \}  \\
    = \{ (y,z) \in \C^2,  \   2(1+ \rho )^2 yz =  (y+z) \left ( 1 + \rho + \rho (y+z) \right )  \} . 
    \end{multline*}
$\mathcal C$ is a Riemann surface which  is  invariant under the symmetry $(y,z) \mapsto (z,y)$.
Let define $a$ and $b$ by
 \[ a= \frac{1+ \rho}{2 \, (1+ \rho ^2)} \quad \text{ and } \quad b =   \frac{1}{2  \,  \sqrt {1+ \rho ^2}}  \,  \]
 and note that $0 <b<a$. Then $\mathcal C$ is  equivalently
characterized by the following equation 
\begin{equation} \label {C}    \frac{(2a-y-z) ^2}{a^2} - \frac{(y-z)^2}{b^2} =4 .  
\end{equation} 
$\mathcal C$ also has a parametric description, as 
 \begin{equation} \label{parametric} \mathcal C = \big \{ (a- a \cosh \theta + b \sinh \theta,  a-  a \cosh \theta - b \sinh \theta),\   \theta \in \C \big \}, 
 \end{equation}
in terms of  the hyperbolic functions  $\cosh$ and $\sinh$  with complex variable $\theta$ 
 \[  \cosh(\theta) = \frac {e^{\theta} + e^{- \theta}}{2} \qquad \text{ and }  \qquad  \sinh (\theta) = \frac {e^{\theta} - e^{-\theta}}{2}, \qquad \theta \in \C. \]
We have here used  the following equivalence,  for  $(u,v) \in \C^2$, 
 \[ u^2 - v^2 = 1 \quad  \Longleftrightarrow  \quad \exists \, \theta \in \C,  \ (u,v) = ( \cosh \theta, \sinh \theta) . \]

 Now  starting from any initial couple $(y,z) \in \mathcal C$, 
 one can built a \emph {chain} of couples in $ \mathcal C$. By this we mean that there is a unique sequence $(y^{(n)})_{n \in \Z}$ of complex numbers  that  satisfies the following two conditions.
 \begin{enumerate}
 \item  $y^{(0)} = y$, $y^{(1)} = z$ and
 \item for all  $n \in \Z$,  $y^{(n-1)}$ and $ y^{(n+1)} $  are the two (possibly equal) solutions $z$ of equation $  (y^{(n)} ,z) \in \mathcal C $, that is, of equation ~\eqref{C} with $y^{(n)}$ in place of $y$.
 \end{enumerate}
 This  results from the fact (see Remark \ref{degree_two}) that for given  $y$, equation $p_x(y) =0$  has two (possibly equal) solutions $x \in \C$.  
 
 Along such a chain,   for $n \in \Z$, $y^{(n)}$ and $y^{(n+1)}$  are the roots of some $p_{x^{(n)}}$. Or reversing roles,  $x^{(n-1)}$ and $x^{(n)}$  are the  roots of $ p_x(y^{(n)}) =0$.  Recall that 
 \[ p_x(y) ~=~ y^2 - 2(1+ \rho)\,  x  y + (1+2 \rho x) \, x ~=~ 2 \rho\,  x^2 - \Big  (  2(1+ \rho)\,  y  -1 \Big )  \, x +  y^2, \]
 so that the $x^{(n)}$'s and $y^{(n)}$'s  are related through  the following equations for   $n \in \Z$ : 
 \[ y^{(n)} +\,  y^{(n+1)} =   2(1+ \rho)\, x^{(n)} \quad \text{ and } \quad x^{(n-1)}+ x^{(n)} = ( 2 \rho ) ^{-1} \Big ( 2 (1+ \rho)\,   y^{(n)} -1 \Big )  . \]
 From this, one can  derive (by summing two consecutive equations of the first type, and next using the second equation) that $(y^{(n)})_{n \in \Z}$ satisfies the two-step recursion  
 \begin{equation} \label{recursion_y} y^{(n+1)} -2 \,  \frac{1+ \rho + \rho ^2}{\rho } y ^{(n)} + y ^{(n-1)} = - \frac{ 1+ \rho}{ \rho}, \qquad n \in \Z,  
 \end{equation}
 which is easily solved, noting that the polynomial $X^2 -2 \,  \big (\rho  ^{-1}  + 1+ \rho \big ) X +  1$ has roots  $(a+b)/(a-b)$ and $(a-b)/(a+ b)$. We get the following formulation of  Lemma 3 of \cite{Kingman-1}.

 \begin{lemma} \label{chain}  (Kingman)
 
  For any  $(y,z) \in \mathcal C$, the associate sequence  $(y^{(n)})_{n \in \Z}$ satisfies
  \[ y^{(n)} = a + \alpha(y,z) \left ( \frac{a+b}{a-b}\right ) ^n + \beta (y,z)  \left ( \frac{a-b}{a+b}\right )^n, \qquad n\in \Z, 
  \]
 where $\ \alpha(y,z) ,\, \beta (y,z)$ are  such that $\, \alpha(y,z) \, \beta (y,z) = (a^2-b^2)/4\, $ and given by
 \[ \alpha(y, z)  = \frac{a-b}{ 4ab} \Big (a( z-y) +b (z+y)-2ab  \Big ) ,  \quad     \beta(y,z)  = \frac{a+b}{4ab} \Big (a( y-z) +b (z+y)-2ab  \Big ). \]
  \end{lemma}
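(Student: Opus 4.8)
### Proof proposal

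The plan is to derive the closed form for $y^{(n)}$ directly from the two-step recursion \eqref{recursion_y}, which has already been established just before the statement, and then to identify the constants by matching the two initial conditions $y^{(0)} = y$ and $y^{(1)} = z$ coming from a chain starting at $(y,z) \in \mathcal{C}$. First I would note that \eqref{recursion_y} is an affine second-order linear recursion, so its general solution is a particular solution plus the general solution of the homogeneous equation $y^{(n+1)} - 2\rho^{-1}(1+\rho+\rho^2) y^{(n)} + y^{(n-1)} = 0$. A constant particular solution $y^{(n)} \equiv c$ must satisfy $c\big(2 - 2\rho^{-1}(1+\rho+\rho^2)\big) = -(1+\rho)/\rho$; solving gives $c = (1+\rho)/(2(1+\rho+\rho^2) - 2\rho) = (1+\rho)/(2(1+\rho^2)) = a$, using the definition $a = (1+\rho)/(2(1+\rho^2))$. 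Thus the particular solution is exactly the constant $a$.

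Next I would handle the homogeneous part. Its characteristic polynomial is $X^2 - 2\big(\rho^{-1} + 1 + \rho\big) X + 1$, and as observed in the text this factors with roots $r_\pm := \tfrac{a\pm b}{a\mp b}$; one checks $r_+ r_- = 1$ (consistent with the constant term being $1$) and $r_+ + r_- = 2(a^2+b^2)/(a^2-b^2)$, which should reduce to $2(\rho^{-1}+1+\rho)$ after substituting $a = (1+\rho)/(2(1+\rho^2))$, $b = 1/(2\sqrt{1+\rho^2})$ — a short but routine computation since $a^2 - b^2 = \big((1+\rho)^2 - (1+\rho^2)\big)/(4(1+\rho^2)^2) = \rho/(2(1+\rho^2)^2)$ and $a^2+b^2 = \big((1+\rho)^2 + (1+\rho^2)\big)/(4(1+\rho^2)^2)$. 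Hence the general solution has the stated shape
\[
y^{(n)} = a + \alpha\left(\frac{a+b}{a-b}\right)^n + \beta\left(\frac{a-b}{a+b}\right)^n,
\]
for constants $\alpha,\beta$. Since $r_+ r_- = 1$, imposing on $\alpha,\beta$ the normalization $\alpha\beta = (a^2-b^2)/4$ is a legitimate way to parametrize them (it is automatically compatible, not an extra constraint, once we also track the overall scaling — I would verify this is consistent rather than over-determined).

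Finally, I would pin down $\alpha(y,z)$ and $\beta(y,z)$ from the two conditions $y^{(0)} = a + \alpha + \beta = y$ and $y^{(1)} = a + \alpha r_+ + \beta r_- = z$. These give $\alpha + \beta = y - a$ and $\alpha r_+ + \beta r_- = z - a$, a $2\times 2$ linear system whose solution is $\alpha = \big((z-a) - r_-(y-a)\big)/(r_+ - r_-)$ and $\beta = \big(r_+(y-a) - (z-a)\big)/(r_+ - r_-)$. Substituting $r_\pm = (a\pm b)/(a\mp b)$, so that $r_+ - r_- = \big((a+b)^2 - (a-b)^2\big)/(a^2-b^2) = 4ab/(a^2-b^2)$, and simplifying should yield exactly the claimed expressions $\alpha(y,z) = \tfrac{a-b}{4ab}\big(a(z-y)+b(z+y)-2ab\big)$ and $\beta(y,z) = \tfrac{a+b}{4ab}\big(a(y-z)+b(z+y)-2ab\big)$; I would then check $\alpha\beta = (a^2-b^2)/4$ as a consistency test, which also uses the constraint $(y,z)\in\mathcal{C}$, equivalently equation \eqref{C}, namely $(2a-y-z)^2/a^2 - (y-z)^2/b^2 = 4$. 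The main obstacle is purely computational: keeping the algebra with $a$, $b$ and the quadratic surface relation organized so that the product $\alpha\beta$ collapses to $(a^2-b^2)/4$ precisely when $(y,z)$ lies on $\mathcal{C}$; it is here that the membership $(y,z)\in\mathcal C$ is actually used, and I would make sure the identity \eqref{C} is invoked at exactly that point rather than assumed implicitly. Uniqueness of the chain, hence of the sequence $(y^{(n)})$, is already guaranteed by the degree-two remark (Remark \ref{degree_two}) and the construction preceding the lemma, so nothing further is needed there.
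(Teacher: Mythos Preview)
Your proposal is correct and follows exactly the route the paper indicates: solve the affine two-step recursion \eqref{recursion_y} via the constant particular solution $a$ and the homogeneous solutions given by the roots $(a\pm b)/(a\mp b)$ of the characteristic polynomial, then match initial data. The paper in fact omits all details beyond naming those roots, so your write-up (including the verification that $\alpha\beta=(a^2-b^2)/4$ reduces precisely to the curve equation \eqref{C}) is a faithful expansion of what the authors leave implicit.
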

   
\medskip 
 
  \begin{remark} \label {chains}
For given $y$, there are only  two  (possibly equal)  chains with $y^{(0)} = y$, corresponding to  the two possible choices of $y^{(1)}$. Those chains are equal up to  symmetry $n \mapsto -n$.   
   \end{remark}

\medskip 
 It is easily proved using equation $2(1+ \rho )^2 yz =  (y+z) \left ( 1 + \rho + \rho (y+z) \right ) $ of $\mathcal C$,  that equation $\phi(y,z) = 0$ has exactly two solutions  $(y,z) \in \mathcal C$, given by 
\[  (u_0,u_1) ~ \stackrel{def}{=}~ \left (0, -\frac{1+ \rho}{ \rho} \right ) \ \qquad \text{and}  \qquad    (v_0,v_1) ~\stackrel{def}{=} ~ \left (\frac {2+\rho }{ \rho ^2},  \frac{1}{ \rho} \right ) .\]
Define $u=(u_n)_{n \in \Z}$ and  $v=(v_n)_{n \in \Z}$ as the chains  $(y^{(n)})_{n \in \Z}$ obtained  for $(y,z)$  respectively equal to $(u_0,u_1) $ and $(v_0,v_1) $. It results from the definition of $\mathcal C$ that    
\begin{align*}
 u_{-1}=~ u_0 =~ 0 , \qquad     u_{-2} =~ u_1 = -\frac{1+ \rho}{ \rho}, 
 \end{align*}
   and more generally $ u_n = u_{-(n+1)} $   for $ n \in \Z$,  
while using ~\eqref{recursion_y}  gives 
\[ v_0 =~ \frac {2+\rho }{ \rho ^2}, \qquad v_1 =~  \frac{1}{ \rho},  \qquad v_2 = 1, \qquad v_3 = 1+2 \rho.  \]

\medskip 
The expression of $A$ derived in \cite{cohen1995two} can now be formulated. 

  \begin{theorem} \label{cohen}  (Cohen) For $y \in \C$ with $|y| < (2+ \rho)/ \rho ^2$,
  \[ A(y) =  C \  \dfrac {\prod _{n=1}^{\infty } \left ( 1 - y/u_n \right) } {\prod _{n=0}^{\infty } \left ( 1 - y/v_{-n} \right) }, \]
where the constant  $C$ is  such that
\[ A(1)  = 1- \rho . \]
 \end{theorem}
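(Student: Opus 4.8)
The plan is to produce the right-hand side as an explicit (``obvious'') solution of the functional equation and then invoke uniqueness. Write $\hat A(y)$ for the right-hand side, with the constant $C$ fixed by $\hat A(1)=1-\rho$. First I would check analyticity: by Lemma~\ref{chain} the sequences $(u_n)$ and $(v_{-n})$ grow geometrically, like $((a+b)/(a-b))^{|n|}$, with $u_n\neq0$ for $n\ge1$ and $v_{-n}\neq0$ for $n\ge0$ (recall $u_{-1}=u_0=0$, while $v_0=(2+\rho)/\rho^2\neq0$); hence $\sum1/|u_n|$ and $\sum1/|v_{-n}|$ converge, both infinite products converge locally uniformly off the poles, and $\hat A$ is meromorphic on $\C$ with zero set $\{u_n:n\ge1\}$ (on the negative real axis) and pole set $\{v_{-n}:n\ge0\}$ (on the positive axis, the smallest in modulus being $v_0$). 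In particular $\hat A$ is holomorphic on $\{|y|<(2+\rho)/\rho^2\}$, which for $\rho<1$ contains the disc $\{|y|<1+2\rho\}$, and $\hat A(1)$ is a finite nonzero real, so $C$ is well defined. Granting that $\hat A$ also satisfies (ii) of Theorem~\ref{kingman}, Lemma~\ref{uniquenessA} gives $A=\hat A$ on $\{|y|<1+2\rho\}$, hence --- both being analytic there --- on all of $\{|y|<(2+\rho)/\rho^2\}$.

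The heart of the proof is the functional equation. I would uniformize $\mathcal C$ via Lemma~\ref{chain}: with $y(\theta)=a-a\cosh\theta+b\sinh\theta$, $z(\theta)=a-a\cosh\theta-b\sinh\theta$, $c=\log\frac{a+b}{a-b}>0$, one has $\mathcal C=\{(y(\theta),z(\theta)):\theta\in\C\}$; the symmetry $(y,z)\mapsto(z,y)$ is $\theta\mapsto-\theta$; every chain is the image of an arithmetic progression of step $c$; and in particular $u_n=y((n+1)c)$ and $v_n=y(\theta_v+nc)$ for a suitable $\theta_v$. Since $y(\theta)$ and $\phi(y(\theta),z(\theta))$ are rational functions of $s=e^{\theta}$, the function $\Theta(s):=\hat A\big(y(\theta)\big)\,\phi\big(y(\theta),z(\theta)\big)$ is meromorphic on $\C^{*}$, and property (ii) is exactly the assertion $\Theta(s)=\Theta(1/s)$. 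The plan is to prove this by comparing divisors. Using the analyticity step together with the fact (recorded just before the theorem) that $\phi$ vanishes on $\mathcal C$ precisely at $(u_0,u_1)$ --- corresponding to $s=e^{c}$, where $\hat A$ is finite --- and at $(v_0,v_1)$ --- corresponding to $s=e^{\theta_v}$, where $\hat A$ has a simple pole that cancels that zero --- one finds that $\Theta$ has zeros exactly at $\{e^{kc}:k\in\Z\setminus\{0\}\}$ and poles exactly at $\{e^{\theta_v-kc}:k\ge1\}\cup\{e^{(k+1)c-\theta_v}:k\ge0\}$, and that $s\mapsto1/s$ preserves each of these sets (the surviving zero of $\phi$ at $e^{c}$ being exactly what compensates the ``half-sided'' nature of the two products). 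Hence $\Theta(s)/\Theta(1/s)$ is holomorphic and nowhere zero on $\C^{*}$, so equals $C's^{k}$; a growth estimate for $\hat A$ at infinity, read off from the asymptotics in Lemma~\ref{chain}, shows $\Theta$ has the same order as $s\to0$ and as $s\to\infty$, forcing $k=0$, and evaluating at the fixed point $s=1$ (where $y=z=0$) gives $C'=1$. Thus $\hat A$ satisfies (ii) of Theorem~\ref{kingman}.

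It remains to prove the uniqueness statement, Lemma~\ref{uniquenessA}. Let $A$ be analytic in $\{|y|<1+2\rho\}$, satisfy (ii), and have $A(1)=1-\rho$. As in Kingman's treatment, the functional equation forces a meromorphic continuation of $A$ to $\C$ and holds throughout $\mathcal C$; dividing the instances of (ii) for $A$ and for $\hat A$ gives a meromorphic $D:=A/\hat A$ on $\C$ with $D(y)=D(z)$ for all $(y,z)\in\mathcal C$, i.e. $\theta\mapsto D(y(\theta))$ is even, hence a meromorphic function of $s=e^{\theta}$ on $\C^{*}$. All zeros of $\hat A$ lie on the $u$-chain, which contains $0$, where $A$ is analytic and $\hat A(0)=C\neq0$; chain invariance of $D$ then forbids $D$ a pole at $0$ and forces $A$ to vanish at every $u_n$. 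A further chain-and-growth argument, using analyticity of $A$ inside the disc and the known pole locations of $\hat A$, shows that $D$ is entire, even in $\theta$, and of bounded growth on $\C^{*}$, hence constant; $D(1)=1$ then yields $A=\hat A$, which completes the proof.

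The step I expect to be the genuine obstacle is the functional-equation verification in the second paragraph: carrying out the divisor bookkeeping on $\C^{*}$ cleanly --- in particular checking that the two zeros of $\phi$ on $\mathcal C$ sit exactly where needed to make the divisor of $\Theta$ invariant under $s\mapsto1/s$, and dealing with the non-generic values of $\rho$ for which some of the points $e^{kc},\,e^{\theta_v-kc}$ coincide (presumably by continuity in $\rho$) --- and controlling the growth of $\hat A$ at infinity precisely enough to rule out a monomial factor $s^{k}$ with $k\neq0$.
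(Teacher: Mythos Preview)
Your high-level plan---exhibit an explicit solution $\hat A$ of the functional equation and then prove uniqueness---is exactly the paper's strategy. The two key steps, however, are executed quite differently, and your versions carry real gaps.

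\textbf{Functional equation.} The paper does not use a divisor argument on $\C^{*}$. Instead it works with the \emph{finite truncations}
\[
\Pi_N(y)=\dfrac{y\prod_{n=1}^{N}\lambda_n^{-1}y^{(n)}y^{(-n)}}{\prod_{n=0}^{N}\mu_n^{-1}\phi(y^{(n)},y^{(n+1)})\phi(y^{(-n)},y^{(-n-1)})}
\]
and computes, for $(y,z)=(y^{(0)},y^{(1)})\in\mathcal C$, the ratio $\Pi_N(y)\phi(y,z)\big/\big(\Pi_N(z)\phi(z,y)\big)$ explicitly as a single ``edge term'' that tends to $1$ as $N\to\infty$, using only the asymptotics $y^{(n)}\sim\alpha\,e^{nc}$, $y^{(-n)}\sim\beta\,e^{nc}$ from Lemma~\ref{chain}. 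This is entirely elementary. Your route has a genuine hole: a holomorphic nowhere-vanishing function on $\C^{*}$ is \emph{not} automatically of the form $C's^{k}$ (e.g.\ $e^{s+1/s}$), so ``$\Theta(s)/\Theta(1/s)$ is holomorphic and nonvanishing on $\C^{*}$, hence $C's^{k}$'' is false as stated. You are aware that growth control is needed, but controlling the growth of the infinite product $\hat A(y(\theta))$ precisely enough to force the exponential factor to vanish is not lighter than the paper's truncation argument.

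\textbf{Uniqueness.} Here the approaches diverge most. You propose to first meromorphically continue an arbitrary analytic solution $A$ to all of $\C$ ``as in Kingman's treatment'' and then run a global chain-and-growth argument for $D=A/\hat A$. The paper deliberately avoids this. Its key observation is that on the ellipse $\mathcal E$ obtained by taking $\theta\in i\R$ in the parametrization one has $(y,z)=(y,\bar y)$, and $\mathcal E$ together with the open region $E$ it bounds lies inside the disc $|y|<1+2\rho$ (and away from the zeros of $\hat A$). The functional equation then reads $(A/\hat A)(y)=(A/\hat A)(\bar y)$ for $y\in\mathcal E$; since $y\mapsto(A/\hat A)(y)$ and $y\mapsto(A/\hat A)(\bar y)$ are both harmonic on $E$ and agree on $\partial E=\mathcal E$, they agree on $E$, so $A/\hat A$ is simultaneously holomorphic and antiholomorphic on the connected open set $E$, hence constant. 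This uses nothing beyond analyticity of $A$ in $|y|<1+2\rho$, and is the content of Lemma~\ref{uniquenessA}. Your ``further chain-and-growth argument'' for $D$ would need, at minimum, the meromorphic continuation of $A$ as an independent input and a Liouville-type bound you have not supplied.
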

\medskip
The remaining part of this section is devoted to an elementary proof of this theorem. But first, a few more notations and simple results are  needed.

  For $n \in \N$ and $y \in \C$, we define 
\begin{equation}  \label{Q_n} Q _n(y) = \phi \left (y^{(n)},y^{(n+1)} \right ) \,  \phi  \left(y^{(-n)},y^{(-n-1)}\right  )  ,    
\end{equation}
 where $(y^{(n)})_{n \in \Z}$ is any of the two  chains having $y^{(0)} = y$ (see Remark \ref{chains}). Since both are mutually symmetric,  $Q_n(y)$ is well-defined.

The following lemma is  crucial.

\begin{lemma} \label{zeroes_poles}
The two real-valued sequences $u$ and $v$ satisfy the following properties.

\smallskip
\noindent 1.  For $n \ge 1$, \ $  u_n  \le u_1\, $ and $ \,   v_{-n}  > v_0 \,$.


\smallskip
\noindent 2. The series $\sum _{n=1} ^{\infty} |u_n| ^{-1}$ and $\sum _{n=1} ^{\infty} v_{-n} ^{-1}$ converge.
 
\noindent 3. For  $n \ge 1$, the mapping $\, y \in \C  \longmapsto  y^{(n)}  y^{(-n)}$  is a degree $2$ polynomial  and has roots  $u_{ -n}$ and $u_n$. In other words,  for  $y \in \C$,
\[  y^{(1)}  y^{(-1)}=  \lambda _1 \,  y   \left ( 1- \frac{y}{u_1} \right )   \   \   \text{and}   \quad       y^{(n)}  y^{(-n)} =   \lambda _n \left ( 1- \frac{y}{u_{n}} \right ) \left ( 1- \frac{y}{u_{-n} }\right ),\, n \ge 2 \]
where for  $n \ge 1$, $ \lambda _n$ is a constant.

\noindent 4. For  $n \in \N$, $Q_n$ defined in ~\eqref{Q_n}  is a degree $2$  polynomial  and has  roots $u_{-n}$ and $v_{-n}$. In other words,  for some constants $\mu _n, \, n \in \N$ and,  for  $y \in \C$,
\[Q_0(y) =  \mu _0 \,  y   \left ( 1- \frac{y}{v_0} \right )  \quad \text{ and } \quad Q_1(y) =  \mu _1 \,  y  \left ( 1- \frac{y}{v _{-1}} \right ), \]

while  for $ n \ge 2$,
\[     Q_n(y) =  \mu _n \left ( 1- \frac{y}{u_{-n}} \right ) \left ( 1- \frac{y}{v_{-n}} \right ) . \]

\end{lemma}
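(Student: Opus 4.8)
The plan is to reduce everything to Kingman's chain formula (Lemma~\ref{chain}). Write $t=(a+b)/(a-b)>1$, so that any chain $(y^{(n)})_{n\in\Z}$ through a point $y$ has the form $y^{(n)}=a+\alpha\,t^{n}+\beta\,t^{-n}$, where $\alpha+\beta=y-a$ (by the formula at $n=0$) and $\alpha\beta=(a^{2}-b^{2})/4$ is a constant; the two chains through $y$ correspond to the two orderings of the pair $\{\alpha,\beta\}$. Both $u$ and $v$ are real, since they solve the real two-step recursion \eqref{recursion_y} from real data. For Part~1, the symmetry $u_{n}=u_{-(n+1)}$ noted before the lemma, together with $\alpha_{u}\beta_{u}=(a^{2}-b^{2})/4$, pins down $\alpha_{u}=-(a+b)/2$, $\beta_{u}=-(a-b)/2$, so $u_{n}=a-\tfrac12\big((a+b)t^{n}+(a-b)t^{-n}\big)$; then $u_{n+1}-u_{n}=-\tfrac{t-1}{2}\big((a+b)t^{n}-(a-b)t^{-n-1}\big)<0$ for $n\ge0$ (as $t>1$ and $a+b>a-b>0$), so $(u_{n})_{n\ge0}$ is strictly decreasing and $u_{n}\le u_{1}<0$ for $n\ge1$. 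For $v$, I would plug into the formula of Lemma~\ref{chain} using $v_{0}-v_{1}=2\rho^{-2}$ and $v_{0}+v_{1}=2(1+\rho)\rho^{-2}$, obtaining $\beta(v_{0},v_{1})=\tfrac{a+b}{4ab}\big(\tfrac{2}{\rho^{2}}(a+b(1+\rho))-2ab\big)\ge\tfrac{a+b}{4ab}\cdot 2a\,(\rho^{-2}-b)>0$ since $b\rho^{2}<b<\tfrac12<1$ (we are in the ergodic regime $\rho<1$); thus also $\alpha(v_{0},v_{1})>0$. As $v_{0}>v_{1}$ (trivially, $2+\rho>\rho$) is equivalent to $\beta_{v}>\alpha_{v}t$, one then gets, for $n\ge1$,
\[
v_{-n}-v_{0}=\alpha_{v}(t^{-n}-1)+\beta_{v}(t^{n}-1)=(t^{n}-1)\big(\beta_{v}-\alpha_{v}t^{-n}\big)>0 .
\]

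Part~2 is then immediate from the explicit expressions: $u_{n}\sim-\tfrac{a+b}{2}t^{n}$ as $n\to\infty$, and $v_{-n}=a+\alpha_{v}t^{-n}+\beta_{v}t^{n}>\beta_{v}t^{n}$, so $|u_{n}|^{-1}$ and $v_{-n}^{-1}$ are eventually dominated by constant multiples of $t^{-n}$; since $t>1$ both series converge.

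For Parts~3 and 4 the decisive observation is that both $y^{(n)}y^{(-n)}$ and $Q_{n}(y)=\phi(y^{(n)},y^{(n+1)})\,\phi(y^{(-n)},y^{(-n-1)})$ are invariant under exchanging the labels $\alpha\leftrightarrow\beta$ of the chain through $y$ (equivalently, under the reflection $m\mapsto-m$ of the chain). Indeed the swap sends $a+\alpha t^{n}+\beta t^{-n}$ to $a+\alpha t^{-n}+\beta t^{n}$, so it interchanges the two factors of $y^{(n)}y^{(-n)}$; and writing $\phi(y^{(n)},y^{(n+1)})=c+\alpha(\rho-t)t^{n}+\beta(\rho-t^{-1})t^{-n}$ with $c=(\rho-1)a-(1+\rho)\rho^{-1}$, a short computation gives $\phi(y^{(-n)},y^{(-n-1)})=c+\beta(\rho-t)t^{n}+\alpha(\rho-t^{-1})t^{-n}$, so the swap interchanges the two factors of $Q_{n}$ as well. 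Hence each of these quantities is a symmetric function of $\{\alpha,\beta\}$, therefore a polynomial in $\alpha+\beta=y-a$ and in the constant $\alpha\beta$ --- in particular a polynomial in $y$ of degree $2$, with leading coefficient $1$ for $y^{(n)}y^{(-n)}$ and $(\rho-t)(\rho-t^{-1})=\rho^{2}-\rho(t+t^{-1})+1=-(1+\rho)^{2}$ for $Q_{n}$ (using $t+t^{-1}=2(\rho^{-1}+1+\rho)$). To locate the roots I would evaluate at members of the distinguished chains: with $y=u_{-n}$ and the chain $y^{(m)}=u_{m-n}$ one has $y^{(n)}=u_{0}=0$ and $y^{(n+1)}=u_{1}$, so $y^{(n)}y^{(-n)}=0$, and since $\phi(u_{0},u_{1})=0$ by the definition of $(u_{0},u_{1})$, also $Q_{n}(u_{-n})=0$; similarly $y=u_{n}$ annihilates $y^{(n)}y^{(-n)}$, and $y=v_{-n}$, via the chain $y^{(m)}=v_{m-n}$ and $\phi(v_{0},v_{1})=0$, annihilates $Q_{n}$. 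The roots are distinct: $u_{n}\ne u_{-n}$ for $n\ge1$ (by strict monotonicity of $(u_{m})_{m\ge0}$, together with $u_{-1}=u_{0}=0\ne u_{1}$), and $u_{-n}=u_{n-1}\le0<v_{0}<v_{-n}$ for $n\ge1$ (with $u_{-n}<0$ once $n\ge2$). Normalizing by the leading coefficient then yields the stated factorizations, with $\lambda_{n}$ and $\mu_{n}$ read off from the leading coefficient and the value at $y=0$; the factor $y$ in the formulas for $Q_{0}$, $Q_{1}$ (and for $y^{(1)}y^{(-1)}$) is simply the root $u_{-n}=0$ occurring in those cases.

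The hard part is precisely the polynomiality claimed in Parts~3 and 4: a priori $y^{(1)}$ depends on $y$ through a square root of a quadratic, so the two products look algebraic rather than rational in $y$, and it is the $\alpha\leftrightarrow\beta$ symmetry that removes the ambiguity and collapses each of them to an honest degree-$2$ polynomial. Once that is in hand, identifying the roots is a one-line verification using only the chain identities and the fact that $(u_{0},u_{1})$ and $(v_{0},v_{1})$ are the two zeros of $\phi$ on $\mathcal C$, while Parts~1 and 2 are routine estimates on the explicit sequences.
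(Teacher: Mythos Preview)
Your proof is correct. Parts~1 and~2 are handled in essentially the same spirit as the paper---explicit growth estimates on the chain coefficients from Lemma~\ref{chain}---though the paper argues Part~1 more qualitatively (any real chain is unimodal: it first decreases then increases, or vice versa, with at most one stationary point), while you compute $\alpha_u,\beta_u,\alpha_v,\beta_v$ directly and check signs.

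For Parts~3 and~4 your route is genuinely different from the paper's, and worth noting. The paper writes $y^{(\pm n)}=\alpha_n+\beta_n y+\gamma_n x^{(\pm)}$, linear in $y$ and in the two $x$-roots $x^{(0)},x^{(-1)}$, then uses the elementary symmetric functions $x^{(0)}+x^{(-1)}$ and $x^{(0)}x^{(-1)}$ (both polynomial in $y$, from the quadratic $p_x(y)=0$ in $x$) to see that $y^{(n)}y^{(-n)}$ and $Q_n$ are degree-two polynomials in~$y$. You instead work entirely inside the $(\alpha,\beta)$ parametrization of Lemma~\ref{chain}: you observe that the chain reflection $m\mapsto -m$ is exactly the swap $\alpha\leftrightarrow\beta$, so both $y^{(n)}y^{(-n)}$ and $Q_n$ are symmetric functions of $(\alpha,\beta)$, hence polynomials in $\alpha+\beta=y-a$ and the constant $\alpha\beta=(a^2-b^2)/4$, with leading coefficients $1$ and $(\rho-t)(\rho-t^{-1})=-(1+\rho)^2$ read off directly. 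This is a cleaner packaging: it bypasses the auxiliary $x$-variables and makes the polynomiality a one-line consequence of the well-definedness of $Q_n$ already noted after~\eqref{Q_n}. The paper's version has the mild advantage that the inductive linearity in $(y,x^{(\pm)})$ does not presuppose Lemma~\ref{chain}; yours exploits that lemma more fully. The identification of roots via shifted $u$- and $v$-chains is the same in both.
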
 

\begin{proof}
From   Lemma~\ref{chain}, one gets the following for any real-valued chain  $(y ^{(n)})$. \\
If   $\alpha (y, y^{(1)}) >0$, then, for any $n \in \Z$, 
\begin{align*}
   y ^{(n)} < y ^{(n+1)}  \Longleftrightarrow \left ( \frac{a+b}{a-b} \right ) ^{2n}  >~ \frac {a-b}{a+b} \, \frac{ \beta (y, y^{(1)})}{\alpha (y, y^{(1)})} 
\end{align*}
while if $\alpha (y, y^{(1)}) <0$, then, for any $n \in \Z$,
 \begin{align*}
   y ^{(n)} < y ^{(n+1)}  \Longleftrightarrow \left ( \frac{a+b}{a-b} \right ) ^{2n}  < ~\frac {a-b}{a+b} \, \frac{ \beta (y, y^{(1)})}{\alpha (y, y^{(1)})}.
\end{align*}
We recall that $ \alpha(y,y^{(1)} ) \beta(y,y^{(1)}) = (a^2-b^2)/4$ so that  $\alpha(y,y^{(1)} )$ and $\beta(y,y^{(1)} )$  have  the same sign and are non zero.  Hence,  if $ \alpha (y, y^{(1)}) >0$ (resp.~$ \alpha (y, y^{(1)}) <0$), the sequence $(y ^{(n)})_{n \in \Z}$  first decreases  (resp.~increases), up to some time $n$,  after which it  is nondecreasing  (resp.~nonincreasing).  Equality $y ^{(n)} = y ^{(n+1)}$ can moreover occur only at this first $n$ at which $y ^{(n)}$ is    minimum   (resp.~maximum). Point 1 of the lemma then simply results from relations
\begin{align*} u_{-1} & =~ u_0 =~ 0 ~>~ u_1 = ~- \frac{1+ \rho}{\rho}, \\
  v_2 & =1~<~ v_{1} = \frac{1}{\rho } \quad \text{and} \quad   v_2 ~<~ v_3 =  1+2 \rho . 
  \end{align*}
Point 2   also results from Lemma \ref{chain}, that shows that the modulus of any chain $(y^{(n)}) $ goes to infinity exponentially fast as $|n| \to + \infty $.   

As for the two last properties, it is easily proved inductively, from relations
\[   y^{(1)} =   2(1+ \rho)\, x^{(0)} -y^{(0)}  \quad \text{ and } \quad   y^{(-1)} =   2(1+ \rho)\, x^{(-1)} -y^{(0)}\] 
together with   \eqref {recursion_y}, that for  any  $y$ ($= y ^{(0)}$) and  $n \ge 1$,
\[  y^{(n)} =  \alpha _n + \beta _n  \,    y^{(0)}  + \gamma_n \, x^{(0)}  \quad \text{ and } \quad   y^{(-n)} =  \alpha _n + \beta _n    \, y^{(0)}  + \gamma_n \,  x^{(-1)}, \]
where  $\alpha _n, \beta _n$ and $\gamma _n$  $(n \ge 1)$ are constants.  It results that
\[ y^{(n)}  y^{(-n)} = \left ( \alpha _n + \beta _n  \,    y^{(0)}  \right  ) ^2 +  \,  \gamma_n   \Big (  \alpha _n + \beta _n  \,    y^{(0)}  \Big )  \big ( x^{(0)} +   x^{(-1)} \big)  +  \gamma_n  ^2 \,   x^{(0)}  x^{(-1)}  \]
and using $x^{(-1)}+ x^{(0)} = ( 2 \rho ) ^{-1} \Big ( 2 (1+ \rho)\,   y^{(0)} -1 \Big )$ and  $x^{(-1)} \, x^{(0)} = \left ( y^{(0)} \right )   ^2 /( 2 \rho )$, one gets that $y^{(n)}  y^{(-n)}$ is polynomial with degree 2 as function of $y^{(0)}$.  The argument is  the same for point $4$ of the lemma, since $\phi$ is a bivariate affine function.

Moreover, the roots of those polynomials are easily identified, using the following equivalences. For $n \ge 1$,
\[   y^{(n)}     y^{(-n)} = 0   ~\Longleftrightarrow ~     y^{(n)} =  u_0  \text{ or } y^{(-n)} = u_0   ~\Longleftrightarrow ~  y =  u_{-n}  \text{ or }  y= u_n , \]
while for $n \in \N$,
\begin{align*}  
  \phi \left (y^{(n)},  y^{(n+1)}\right )  \phi  \left(y^{(-n)},  y^{(-(n+1))}\right ) =0
 \end{align*}
means that either  $\left (y^{(n)},  y^{(n+1)} \right)$  or $ \left  (y^{(-n)},  y^{(-(n+1))}\right  ) $  is equal to  either of  the couples $ (u_0, u_1)$   or  $ (v_0, v_1) $, which is equivalent to asserting that   $  y =  u_{-n}$  or $  y=  v_{-n} $.  
\end{proof}

We are now ready for proving Theorem \ref{cohen}. 

\begin {proof}
It is first proved that 
\begin{equation}  \label{A_infinite}  A(y) \, \phi(y,z)  =  A(z) \, \phi(z,y) \  \   \text{for } (y,z) \in \mathcal C  \  \text{ with } \  |y| < 1+ 2 \rho, \,  |z| < 1+ 2 \rho .
    \end{equation}
Next,  a solution of  \eqref{A_infinite}  that is analytic in the open disk $D(0, 1+ 2 \rho )$ is exhibited. It is finally proved that such a solution is unique, up to a multiplicative constant. 

\medskip The first step  uses the parametrized form of $\mathcal C$ given in \eqref{parametric}, from which (ii) of Theorem \ref{kingman} can be reformulated as follows,
\begin{multline*}  A( a- a \cosh \theta + b \sinh \theta)\,  \phi (a- a \cosh \theta + b \sinh \theta,  a-  a \cosh \theta -b \sinh \theta) \\ = A( a- a \cosh \theta - b \sinh \theta) \,  \phi (a- a \cosh \theta - b \sinh \theta,  a-  a \cosh \theta + b \sinh \theta)  
\end{multline*} 
for all $\theta $ in some neighborhood of $0$ in $\C$. 
By analyticity of both sides  with respect to $\theta \in \C$, equality extends to any $\theta$ at which it makes sense. This yields \eqref {A_infinite} by (i) of Theorem \ref{kingman}.

\medskip
Now the idea behind the construction of a particular solution of \eqref{A_infinite} is the following. 
A formal, heuristic, solution to equation
\[A(y) \, \phi(y,z)  =  A(z) \, \phi(z,y) \quad \text{for  } (y,z) \in \mathcal C  \]
 is given by the following infinite product
\[   \prod _{n=0}^{ \infty} \left (\phi (y^{(n)}, y^{(n+1)}) \phi (y^{(-n)}, y^{(-n-1)})\right ) ^{-1}, \]
as function of $y $, where  $(y^{(n)})_{n \in \Z}$ is any of the two chains with $y^{(0)}= y$ (due to mutual symmetry of  the chains, this formal product does not depend on which  one is chosen). To check this,  first note that the  solutions  $z$ to  $(y,z) \in \mathcal C$ are given by $y^{(1)}$ and $y^{(-1)}$, which respectively  generate the shifted chains $ (y^{(n+1)})$  and $ (y^{(n-1)})$. Then for example,  the infinite product at $y^{(1)}$ differs from that at $y$ only by one factor, namely,   $\phi (y, y^{(1)})$  is changed for   $\phi ( y^{(1)},y)$. This shows that  the relation is  satisfied at $(y,y^{(1)})$.

Before caring about convergence of the  product, the first problem occurs that $0$ is a pole (with multiplicity $2$), which should not be the case for $A$.  But since equation~\eqref{A_infinite} is preserved by multiplying $A$ by any function that is constant along chains (so that its values are equal at $y$ and $z$ for any $(y,z) \in \C$), we can multiply - this is again heuristic - by the  infinite product  
\begin{align*}
\displaystyle{y \prod _{n=1}^{ \infty} y^{(n)}\,  y^{(-n)}}. 
\end{align*}
This formally removes the pole $0$ (since by Lemma~\ref{zeroes_poles},  $y^{(1)}\,  y^{(-1 )}$ has root $0$).

 Now normalizing all factors and using Lemma~\ref{zeroes_poles},   we get the following heuristic solution
 \begin{multline}  \label{product} \Pi(y) \stackrel{def}{=} \dfrac{  y \prod _{n=1}^{ \infty}  \lambda _n  ^{-1}  \, y^{(n)}  y^{(-n)}  } { \prod _{n=0}^{ \infty}  \, \mu _n ^{-1}   \,    \phi (y^{(n)}, y^{(n+1)})  \, \phi (y^{(-n)}, y^{(-n-1)})} \\
 =  \prod_{ n \ge 1}   \left ( 1- \frac{y}{u_n} \right )\prod _{n \ge 0 }  
 \left ( 1- \frac{y}{v_{-n}} \right )  ^{-1}, \end{multline}
where all infinite products converge, due to point 2 of Lemma~\ref{zeroes_poles}. 

 Of course, a rigorous proof must deal with finite truncations of this product. The shift from $y$ to $y^{(1)}$  then introduces edge effects ignored by  the above heuristics.  Yet, the relation will be satisfied  thanks to the exponential decay of   $(y^{(n)})$ at symmetric rate as $ n$ goes to $+ \infty$ and $-\infty$.

 \smallskip
 We now prove that the infinite product $\Pi$ in \eqref{product} satisfies 
 \[ \Pi(y) \,  \phi(y,z) =   \Pi(z)  \, \phi(z,y)  \]
for all $ (y,z) \in \mathcal C $   such that $   y,z \in \C \setminus \{ v_{-n}, n \in \N \}$.
     For $N \in \N$, denote by $\Pi_N$  the partial product
 \[ \Pi _N (y) \stackrel{def}{=} \dfrac{  y  \, \prod _{n=1}^{ N}  \lambda _n  ^{-1} \, y^{(n)}  y^{(-n)}  } { \prod _{n=0}^{ N }\,  \mu _n ^{-1}  \,  \phi (y^{(n)}, y^{(n+1)})\,  \phi (y^{(-n)}, y^{(-n-1)})}. \] 
 Due to analyticity (again using the complex variable $\theta$ instead of $(y,z) \in \mathcal C$), it is enough to consider $y,z $ at which all factors in $ \Pi _N (y)$ and $ \Pi _N (z)$ for $N \in \N$ are all non zero.
  Let $(y,z) \in \mathcal C$ be given so, and choose  $(y^{(n)})_{n \in \Z}$ as the chain such that $y^{(0)}= y$ and  $y^{(1)}=z$. Then
  \[  \Pi _N (y) \, \frac{ \phi(y,z)}{\phi(z,y)} =   \Pi _N (y) \, \frac{ \phi(y,y^{(1)})}{\phi(y^{(1)},y)}  =   \Pi _N (y^{(1)}) \,  \frac{y^{(-N)}}{y^{(N+1)}}   \,   \frac{ \phi(y^{(N+1)},y^{(N+2)})}{\phi(y^{(-N)},y^{(-N-1)})} . \]
  Moreover, from the definition \eqref{phi} of $\phi$,
  \[  \frac{y^{(-N)}}{y^{(N+1)}}   \,   \frac{ \phi(y^{(N+1)},y^{(N+2)})}{\phi(y^{(-N)},y^{(-N-1)})} =\frac { \rho -  y^{(N+2)} / y^{(N+1)}  - (1+ \rho)/ (\rho y^{(N+1)} )} {\rho -  y^{(-N-1)} / y^{(-N)}  - (1+ \rho)/ (\rho y^{(-N)} )} .\]
Now from  Lemma~\ref{chain}, the following limits holds
  \[ \lim _{ N \to +\infty}   y^{(N)}  \left ( \frac{a-b}{a+b} \right ) ^{N} =  \, \alpha (y,y ^{(1)})  \   \text { and }    \lim _{ N \to \infty}   y^{(-N)}  \left ( \frac{a-b}{a+b} \right ) ^{N} = \,  \beta (y,y ^{(1)}) ,\]
  so that (recall that  $\alpha(y,z )$ and $\beta(y,z )$ are always non zero)
   \[  \lim _{ |N| \to +\infty} y^{(N)} = + \infty  \quad \text { and } \quad     \lim _{ |N| \to +\infty} \frac {y^{(N+2)}}{y^{(N+1)} }  =  \lim _{ |N| \to +\infty} \frac { y^{(-N-1)} }{y^{(-N)} }  =  \frac{a+b}{a-b}.\]
   We get that 
   \[ 1 =   \lim _{ N \to +\infty}  \frac { \rho -  y^{(N+2)} / y^{(N+1)}  - (1+ \rho)/ (\rho y^{(N+1)} )} {\rho -  y^{(-N-1)} / y^{(-N)}  - (1+ \rho)/ (\rho y^{(-N)} )} =  \lim _{ N \to +\infty} \frac{ \Pi _N (y) }{  \Pi _N (z) } \, \frac{ \phi(y,z)}{\phi(z,y)},\]
   which yields  $\,  \Pi(y) \phi(y,z) =   \Pi(z) \phi(z,y)$, since  $ \Pi _N  $  goes to  $\Pi  $ as $N$ goes to infinity.

    \medskip
        It is now proved that $A$ is equal to $\Pi$. First note that,  from point 1 of Lemma \ref{zeroes_poles}, $v_0= (2+ \rho)/ \rho ^2$ is the pole of $\Pi$ with smallest modulus. Then, since $1+2 \rho < (2+ \rho)/ \rho ^2$ for all $\rho \in [0,1[$, it results that $\Pi$ is holomorphic in the open disk $D(0,1+2 \rho)$.  We thus know that   \eqref{A_infinite} is satisfied both for $A$ and for $\Pi$ in place of $A$, and  then get by taking ratios
    \begin{equation} \label {ratio} \frac{A(y)}{\Pi (y)} = \frac{A(z)}{\Pi (z)}
    \end{equation}
    for all $(y,z) \in \mathcal C$ such that $|y| < 1+ 2 \rho$,  $|z| < 1+ 2 \rho\, $ and $\, \Pi( y) \,  \Pi (z) \neq 0$.
         
     Consider now  the particular subset $ \mathcal E$  of  $  \mathcal C$  obtained by restricting $\theta$ to  $ i \, \R$  in description \eqref{parametric} of $ \mathcal C$.  Then, $ \mathcal E$ is the set of couples $(y, \overline y)  \in \C$ for $y$ on the ellipse $ \{ a-a \cos t + i \, b \sin t, t \in \R \}$. For $ \rho \in \, ]0,1[$,  we have $2b < 2a = (1+ \rho)/(1+ \rho ^2) < 1+ \rho < 1+ 2 \rho$, so that  $\mathcal E$  is contained in the open disk $D(0,1+2 \rho)$. This is also the case for the bounded open domain $E$ of the complex plane delimited by $\mathcal E$.  Note that $\mathcal E$  is also contained in the half plane $ \{y \in \C, \, \Re e (y) \ge 0 \}$, so that $\Pi$ does not vanish on $\mathcal E$.  Indeed, $\Pi$ only  has  real negative roots, of which  $-(1+ \rho)/\rho$ is the largest one. For $(y,z) = (y, \overline y)$ with $y \in \mathcal E$, equation \eqref{ratio} becomes 
     \begin{equation} \label{invariance} \frac{A(y)}{ \Pi (y) } = \frac{A (\overline y) }{ \Pi   (\overline y) } \qquad \text{ for } y \in \mathcal E.
     \end{equation}
      Now, $A/ \, \Pi$ is analytic in $D(0,1+ 2 \rho) \cap \{y \in \C, \, \Re e (y) > -(1+ \rho)/\rho \}$, so that $(A/ \, \Pi)(y)$ and $(A/ \,  \Pi) (\overline y)$ are  harmonic in this domain, and in particular,  harmonic over $E$ and  continuous over $E \cup \mathcal E$.
    
    By uniqueness of the  extension of a given  continuous function  on $\mathcal E$ into a continuous function on $E \cup \mathcal E$ that is harmonic over $E$, we derive that   \eqref{invariance} extends to all $y \in E$.  This means that $A/  \, \Pi$ is both holomorphic and antiholomorphic over $E$. Since $E$ is a  connected  open subset of  $\C$, this implies that $A/ \,  \Pi$ is constant over $E$. The proof is complete using relation~\eqref{A(1)}. 
            \end{proof}

            \begin{remark} \label{A(1/rho)}
            By analyticity, as used for equation \eqref{A_infinite}, the functional equation \eqref{A}  extends to all $(y,z) \in \mathcal C$ at which it makes sense.
 Noting  that $A(1/\rho) < + \infty$ since  $A$ has radius of convergence $ (2+ \rho)/\rho ^2 > 1/\rho $, the functional equation \eqref{A} at $x=1/(2 \rho) $ then relates $A(1/\rho)$ with $A(1)$  and yields
\begin{equation*}
	A(1/\rho)  = (2- \rho)A(1) = (2-\rho)(1-\rho).
\end{equation*}

            \end{remark}

   \section{The asymmetric  model.}   
      Theorems \ref{pi_K} (i)  and \ref{pi}   extend to the case where the queues have different service rates $\mu_1$ and $\mu_2$. In this asymmetric setting, one can also allow two different probabilities $p_i$,  $i =1,2$,   with $p_1 +p_2 =1$,  for choosing queue $i$  when  both queues have equal length.  Apart from these changes, the dynamics are the same as before.  The global arrival  rate is now denoted   $2 \lambda$, instead of $2 \rho$.

      Considering both  cases $K < \infty$ and $K= \infty$ at the same time, the stationary distribution of the queue-length process will  be simply denoted by $\pi$.   For  infinite $K$, it is assumed that  $2 \lambda < \mu _1 + \mu_2$, so that the process is ergodic.   In Figure~\ref{Q_asym}, the $Q$-matrix of the process is summarized through  a graphic showing  the  transitions and rates  for finite $K$.  For $K = \infty$,  the top and right borders of the square should simply be removed.

           \begin{figure}[ht]
		\centering
		\begin{tikzpicture}
			\draw[->, thick]
			 (0.6,0) -- (6,0) node[below] {$L_1$}
			;
			\draw[->, thick]
			 (0,0) -- (0,6) node[left] {$L_2$};
	
			\node at (-.2,-.2) {$0$};

			\draw[-, thick] (0,0) -- (5,5);
			
			\draw[ thick]
			 (0,5) node[left] {$K$}-- (5,5) ;
			\draw[ thick]
			 (5,0) node[below] {$K$}-- (5,4.4) ;

			\draw[->, thick]
				(1,1) -- ++	(0,.6)	node[left]	{\scriptsize{$2 \lambda p_2$}};
			\draw[->, thick]
				(1.,1.) -- ++	(-.6,0)	node[below]	{\scriptsize{$\mu _1$}};
			\draw[->, thick]
				(1.,1.) -- ++	(0,-.6)	node[right]	{\scriptsize{$\mu_2$}};
			\draw[->, thick]
				(1.,1.) -- ++	(.6,0)	node[above]	{\scriptsize{$2 \lambda p_1$}};

			\draw[->, thick]
				(0,0) -- ++	(0,.6)	node[left]	{\scriptsize{$2 \lambda p_2$}};
		
			\draw[->, thick]
				(0,0) -- ++	(.6,0)	node[above]	{\scriptsize{$2 \lambda p_1$}};

			
			\draw[->, thick]
				(5,5) -- ++	(-.6,0)	node[below]	{\scriptsize{$\mu_1$}};
			\draw[->, thick]
				(5,5) -- ++	(0,-.6)	node[right]	{\scriptsize{$\mu_2$}};
			
		
			\draw[->, thick]
				(0,5) -- ++	(0,-.6)	node[right]	{\scriptsize{$\mu_2$}};
			\draw[->, thick]
				(0,5) -- ++	(0.6,0)	node[above]	{\scriptsize{$2\lambda$}};

			
			\draw[->, thick]
				(1.65,3.2) -- ++	(-.6,0)	node[below]	{\scriptsize{$\mu _1$}};
			\draw[->, thick]
				(1.65,3.2) -- ++	(0,-.6)	node[right]	{\scriptsize{$\mu _2$}};
			\draw[->, thick]
				(1.65,3.2) -- ++	(0.6,0)	node[above]	{\scriptsize{$2\lambda$}};

			\draw[->, thick]
				(1.65,5) -- ++	(-.6,0)	node[below]	{\scriptsize{$\mu _1$}};
			\draw[->, thick]
				(1.65,5) -- ++	(0,-.6)	node[right]	{\scriptsize{$\mu _2$}};
			\draw[->, thick]
				(1.65,5) -- ++	(0.6,0)	node[above]	{\scriptsize{$2\lambda$}};

		
			\draw[->, thick]
				(0,3.2) -- ++	(0,-.6)	node[right]	{\scriptsize{$\mu _2$}};
			\draw[->, thick]
				(0,3.2) -- ++	(0.6,0)	node[above]	{\scriptsize{$2\lambda$}};

			\draw[->, thick]
				(5,1.65) -- ++	(-.6,0)	node[below]	{\scriptsize{$\mu _1$}};
			\draw[->, thick]
				(5,1.65) -- ++	(0,-.6)	node[right]	{\scriptsize{$\mu _2$}};
			\draw[->, thick]
				(5,1.65) -- ++	(0,0.6)	node[right]	{\scriptsize{$2\lambda$}};
		

			\draw[->, thick]
				(3.2,1.65) -- ++	(-.6,0)	node[below]	{\scriptsize{$\mu _1$}};
			\draw[->, thick]
				(3.2,1.65) -- ++	(0,-.6)	node[right]	{\scriptsize{$\mu _2$}};
			\draw[->, thick]
				(3.2,1.65) -- ++	(0,0.6)	node[right]	{\scriptsize{$2\lambda$}};			
			\draw[->, thick]
				(3.2,0) -- ++	(-.6,0)	node[below]	{\scriptsize{$\mu _1$}};
			\draw[->, thick]
				(3.2,0) -- ++	(0,0.6)	node[right]	{\scriptsize{$2\lambda$}};

		\end{tikzpicture}
		\caption{Transition rates of the asymmetric Markov process $(L_1,L_2)$}
 \label{Q_asym}
	\end{figure}
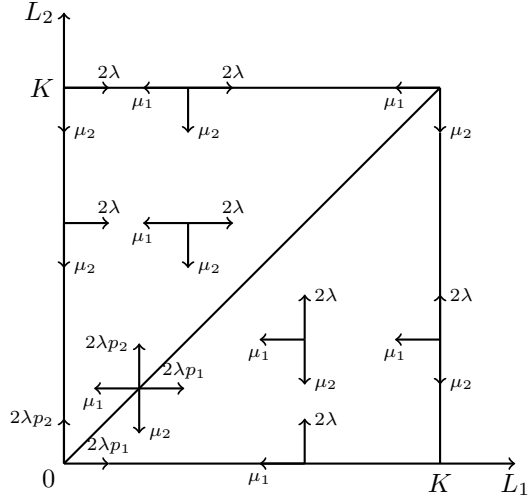
	
	 The stationary state $\pi$ is characterized by the following theorem, where $\pi(K,K)$ must be replaced by $0$ if $K = \infty$. Here,  $g_1$ and $ g_2$ are  defined on $ \N$ by
\[ g_1 (j) = - \frac { \mu _1}{\mu _2}\,  \frac { \xi _{1+} ^j -  \xi _{1-} ^j}{\xi _{1+}  -  \xi _{1-} } \quad \text{ and } \quad g_2 (j) = - \frac { \mu _2}{\mu _1} \, \frac { \xi _{2+} ^j -  \xi _{2-} ^j}{\xi _{2+}  -  \xi _{2-} } \quad (j \in \N), \]
where $ \xi _{1+} ,  \xi _{1-} $ are the roots of the polynomial $\mu _2 X^2 -(2 \lambda + \mu _1 + \mu _2) X + 2 \lambda$ and   $ \xi _{2+} ,  \xi _{2-} $ are those of  $\mu _1 X^2 -(2 \lambda + \mu _1 + \mu _2) X + 2 \lambda$. 

 \medskip
 Unfortunately here, for finite  $K$,  the  chain of equations that has led to the determination of the stationary blocking probability is no longer available. Indeed, one must now deal with  two generating functions
      \[ A_{1} (y) = \sum _{k=0} ^K \pi (k,0)\,  y ^k \quad \text { and }  \quad  A_{2} (y) = \sum _{k=0} ^K \pi (0,k) \,  y ^k, \]
      in place of $A$ or $A_K$.
      The corresponding relations that replace, for  $x \in \C$,  the functional equations  \eqref {A_K}  and  \eqref {A} 
       then    involve values  of both $A_1$ and $A_2$, respectively, at pairs of roots $y_1, z_1$ and $y_2, z_2$ of two  polynomials $p_{x,1}$  and $p_{x,2}$. As a result, one gets,  instead of chains of relations, a  branching   set of relations with  a degree-four regular tree structure.

      \begin{theorem}
      (i)  $\pi $  is determined by its values   $\pi(k,0) $ and $ \pi(0,k)$ for  $0 \le k < K+1$,  through the following expressions:  For $0 \le j <k <  K+1$,
\[   \pi (k,j) ~=~  \frac { \mu _2}{\mu _1} \,  \sum _{l=k}^{K} \pi  (l,0) \left ( g_1^{ \ast (l-k+1)}(j) -  g_1^{ \ast (l-k+1)}(j+1) \right ),  \]
\[  \pi (j,k) ~=~  \frac { \mu _1}{\mu _2} \,  \sum _{l=k}^{K} \pi  (0,l) \left ( g_2^{ \ast (l-k+1)}(j) -  g_2^{ \ast (l-k+1)}(j+1) \right ).  \]
 For $ \, 0 \le k  < K$,
\[  \pi (k,k) ~=~   -\frac{1}{2 \lambda}  \sum _{l=k+1}^{K} \left ( \mu _2 \,  \pi (l,0) \, g_1^{ \ast(l-k)}(k+1) +    \mu _1\,  \pi  (0,l) \,  g_2^{ \ast (l-k)}(k+1) \right )  \] 
and for $ K < \infty$,
\[  \pi (K,K) =   \frac{2 \lambda} {\mu _1+ \mu _2} \Big ( \frac { \mu _2}{\mu _1}    \pi (K,0)  \big (g_1 (K-1) - g_1(K)  \big )  +  \frac { \mu _1}{\mu _2}   \pi  (0,K)   \big(g_2 (K-1) - g_2(K)  \big)  \Big).   \]

\medskip
(ii) The sequences $(\pi(k,0), 0 \le k < K+1)$ and 
$(\pi(0,k), 0 \le k < K+1)$ are characterized, up to some (common) multiplicative constant, by the following relations holding for $x \in \C$  with $|x|$ sufficiently small: 
\begin{multline*}  \frac { \mu _2}{ y_{1}-z_{1}} \Big  ( ( y_{1}-x) A_1 ( y_{1}) -  ( z_{1}-x) A_1 ( z_{1})  \Big ) \\
= \mu _2\,  x ^K \, \pi (K,K) + \frac { \mu _2 + 2 p_1 \lambda x}{ 2\lambda } \left ( \mu _1 \frac {A_1( y_{1})-A_1(z_{1})} { y_{1}-z_{1}}  +   \mu _2  \frac {A_2( y_{2})-A_2(z_{2})} { y_{2}-z_{2}}   \right ) , 
\end{multline*}
and 
\begin{multline*}  \frac { \mu _1}{ y_{2}-z_{2}} \Big  ( ( y_{2}-x) A_2 ( y_{2}) -  ( z_{2}-x) A_2 ( z_{2})  \Big ) \\
= \mu _1\,  x^K \, \pi (K,K) + \frac { \mu _1 + 2 p_2 \lambda x}{ 2\lambda } \left (  \mu _1  \frac {A_1( y_{1})-A_1(z_{1})} { y_{1}-z_{1}}  +   \mu _2  \frac {A_2( y_{2})-A_2(z_{2})} { y_{2}-z_{2}}   \right ) , 
\end{multline*}
where for $x \in \C$, $y _{1} ,  z_{1} $ are the roots of the polynomial 
\[ p_{x,1}(Y) = \mu _2 Y^2 -(2 \lambda + \mu _1 + \mu _2) x Y + (\mu _1 +2 \lambda \, x)  x \]
and   $ y _{2} ,  z _{2} $ are the roots of the polynomial\[p_{x,2} (Y) = \mu _1 Y^2 -(2 \lambda + \mu _1 + \mu _2) x Y  + ( \mu _2 +2 \lambda \, x)  x. \]

\medskip 
(iii) The  characterization of $(\pi( k,0), 0 \le k < K+1)$ and 
$(\pi(0,k), 0 \le k < K+1)$  is complete with the following normalization  relation: 
\[ \mu_2  A_1(1)+ \mu _1 A_2(1) = \mu _1 + \mu _2 - 2 \lambda (1 - \pi (K,K)).\] 

\end{theorem}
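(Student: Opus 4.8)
The plan is to establish (i), (ii) and (iii) as the asymmetric counterparts of, respectively, Theorems~\ref{pi_K}--\ref{pi}, equation~\eqref{A_K}, and equation~\eqref{A_K(1)}; parts (i) and (iii) are essentially computational, and part (ii) carries the real content.

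\emph{Part (i).} The argument copies the ones used for Theorems~\ref{pi_K} and~\ref{pi}. Inside the strict lower triangle $\{(k,j):\,0\le j<k\}$ the process has no jump raising $\max(L_1,L_2)$ except out of the diagonal, and symmetrically for the upper triangle. First I would write the balance equations at the interior sites $(k,j)$ with $0\le j\le k-2$: for each fixed $k<K$ they form, in the variable $j$, a two-step linear recursion whose homogeneous part has characteristic polynomial $\mu_2X^2-(2\lambda+\mu_1+\mu_2)X+2\lambda$, the one with roots $\xi_{1+},\xi_{1-}$. Since $g_1(0)=0$, $g_1(1)=-\mu_1/\mu_2$, and $g_1$ solves the associated homogeneous recursion, convolution in $j$ by $g_1$ turns $\pi(k+1,\cdot)$ into a particular solution of the inhomogeneous recursion (the analogue of \eqref{fundamental_bis}); the boundary equation at $(k,0)$ then forces $\pi(k,\cdot)$ and $\pi(k,0)(g_1-\tau g_1)+g_1\ast\pi(k+1,\cdot)$ to agree on $\{0,\dots,k-1\}$, and iterating over $k$ gives the stated formula for $\pi(k,j)$; the upper triangle is treated identically with $g_2$. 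For the diagonal I would sum all balance equations over the square $\{0,\dots,k\}^2$ with $k<K$: the only outgoing transitions are the two arrivals out of $(k,k)$, of total rate $2\lambda$, so that $2\lambda\,\pi(k,k)=\mu_1\sum_{i=0}^k\pi(k+1,i)+\mu_2\sum_{i=0}^k\pi(i,k+1)$; inserting the off-diagonal formulas just obtained, telescoping, and using $g_\ell^{\ast m}(0)=0$ for $m\ge1$ yields the formula for $\pi(k,k)$, while the balance equation at $(K,K)$, namely $(\mu_1+\mu_2)\pi(K,K)=2\lambda\bigl(\pi(K-1,K)+\pi(K,K-1)\bigr)$, combined with the $l=K$ terms of the off-diagonal formulas, gives the expression for $\pi(K,K)$. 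The infinite-capacity version follows by letting $K\to\infty$, exactly as in Theorem~\ref{pi}, since $g_\ell^{\ast m}$ vanishes below level $m$ and all sums are finite.

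\emph{Part (ii).} Here I would introduce the two bivariate generating functions
\[F_1(x,y)=\sum_{0\le j\le k}\pi(k,j)\,x^jy^{k-j},\qquad F_2(x,y)=\sum_{0\le j\le k}\pi(j,k)\,x^jy^{k-j},\]
so that $F_\ell(0,y)=A_\ell(y)$ and $F_1(x,0)=F_2(x,0)=B(x):=\sum_j\pi(j,j)x^j$. Following Kingman's computation recalled around \eqref{F_K}, the balance equations convert into two functional equations whose left-hand sides are $p_{x,1}(y)F_1(x,y)$ and $p_{x,2}(y)F_2(x,y)$ (one checks the $F_\ell$-coefficient of the transformed system is exactly $p_{x,\ell}(y)$, which reduces to $p_x(y)$ in the symmetric case), and whose right-hand sides are affine-linear in $A_\ell(y)$, in $B(x)$, and in $\pi(K,K)$. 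The essential difference with the symmetric situation is that $B$ can no longer be removed triangle by triangle: the diagonal balance equations, equivalently the square-summation identity of part (i), tie $B(x)$ to a common combination of $A_1$ and $A_2$ evaluated at the roots of $p_{x,1}$ and of $p_{x,2}$ --- the shared factor $\mu_1\,[A_1(y_1)-A_1(z_1)]/(y_1-z_1)+\mu_2\,[A_2(y_2)-A_2(z_2)]/(y_2-z_2)$ appearing on the right-hand side of both stated identities. Substituting this for $B(x)$ and then imposing that each right-hand side vanish at the two roots $y_\ell,z_\ell$ of the corresponding $p_{x,\ell}$ should produce precisely the two stated identities, in which $A_1$ and $A_2$ occur simultaneously. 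Iterating them builds, because for given $y$ the equation $p_{x,\ell}(y)=0$ has two solutions $x$ (Remark~\ref{degree_two}) and because one must move along both kernels, a regular tree of degree four rather than the single chain of the symmetric model; for finite $K$, where $A_1,A_2$ are polynomials of degree $K$ and $\pi(K,K)$ is re-expressed through part (i), this tree meets enough distinct points to determine $(A_1,A_2)$ up to one common scalar, and for $K=\infty$ the same conclusion follows from an analyticity argument in the spirit of Lemma~\ref{uniquenessA} and the proof of Theorem~\ref{cohen}.

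\emph{Part (iii).} The normalization is obtained as $A_K(1)$ was in the proof of Theorem~\ref{blocking}. Both $p_{x,\ell}$ vanish at $x=y=1$, so evaluating the two functional equations of part (ii) there gives two relations among $A_1(1),A_2(1),B(1)$ and $\pi(K,K)$; evaluating the functional equations along $y=1$ gives the asymmetric analogues of $(1-2\rho x)F_K(x,1)=A_K(1)-2\rho xB_K(x)$, hence at $x=1$ two further relations; and the total-mass identity $F_1(1,1)+F_2(1,1)-B(1)=1$ closes the system. Solving this small linear system for $\mu_2A_1(1)+\mu_1A_2(1)$ yields $\mu_2A_1(1)+\mu_1A_2(1)=\mu_1+\mu_2-2\lambda\bigl(1-\pi(K,K)\bigr)$. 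The main obstacle is entirely in part (ii): deriving and correctly bookkeeping the two coupled functional equations without the symmetry $F_1=F_2$, checking that the diagonal substitution eliminates $B(x)$ cleanly so that the shared $(A_1,A_2)$ combination emerges, and --- the genuinely delicate point --- verifying that the degree-four tree of relations, completed by the normalization of part (iii), determines $(A_1,A_2)$ up to a single multiplicative constant, which for infinite capacity requires an analyticity/uniqueness argument like the one behind Theorem~\ref{cohen}.
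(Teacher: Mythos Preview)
Your treatment of parts (i) and (iii) matches the paper's: the off-diagonal formulas come from the same convolution-and-iterate argument as in Theorems~\ref{pi_K} and~\ref{pi}, the diagonal formula from the square-summation identity $2\lambda\,\pi(k,k)=\mu_1\sum_j\pi(k+1,j)+\mu_2\sum_j\pi(j,k+1)$, and the normalization in (iii) is obtained exactly as $A_K(1)$ was in the proof of Theorem~\ref{blocking}.

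For part (ii) you take a genuinely different route. The paper does \emph{not} introduce bivariate generating functions $F_1,F_2$ in the asymmetric case. Instead, it works directly from the formulas already proved in (i): it writes the balance equations at the subdiagonal sites $(k-1,k)$ and $(k,k-1)$, substitutes into them the convolution expressions for $\pi(k-1,k)$, $\pi(k-2,k)$, $\pi(k-1,k+1)$, $\pi(k-1,k-1)$ and $\pi(k,k)$ from part (i), multiplies by $x^k$, and sums over $k$. The crucial technical device that converts these sums into the root-based expressions of the stated identities is Lemma~\ref{S_n}, which evaluates $\sum_{k=0}^n x^k g_i^{\ast(n-k+1)}(k)$ as $-\tfrac{\mu_i}{\mu_{3-i}}\,x\,\tfrac{y_i^n-z_i^n}{y_i-z_i}$; this is what produces the divided differences $[A_\ell(y_\ell)-A_\ell(z_\ell)]/(y_\ell-z_\ell)$ and the factors $(y_\ell-x)A_\ell(y_\ell)-(z_\ell-x)A_\ell(z_\ell)$ appearing in (ii).

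Your Kingman-style approach can in principle reach the same endpoint, but the step you flag as the main obstacle --- ``the diagonal substitution eliminates $B(x)$ cleanly so that the shared $(A_1,A_2)$ combination emerges'' --- is not automatic. The square-summation identity relates $\pi(k,k)$ to full row and column sums $\sum_j\pi(k+1,j)$ and $\sum_j\pi(j,k+1)$, not to $A_1,A_2$ at roots; turning that into the common factor on the right of the stated identities requires precisely the formulas of part (i) together with a computation equivalent to Lemma~\ref{S_n}. So either way you need that lemma (or an equivalent identity), and once you have it the paper's direct route via the subdiagonal balance equations is shorter than setting up and manipulating two coupled kernel equations for $F_1$ and $F_2$.
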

\smallskip

 \begin{proof}  We only give a sketch of the proof.  Proving (i) is similar to the symmetric case, with  finite or infinite  capacity. In particular, the diagonal values $\pi(k,k)$ for  $k <K$ are derived from relations
 \[ \pi  (k,k) ~=~  \frac{1}{  2 \lambda   } \left (  \mu _1 \sum _{j=0}^{k} \pi  (k+1,j)  + \mu _2 \sum _{j=0}^{k} \pi  (j,k+1)  \right ) \qquad (0 \le k <K) \]
 that  here replace  \eqref{diago}. Those are obtained by summing  all balance equations in  squares $\{ 0, \dots , k \} ^2$. For finite $K$, the expression of $\pi(K,K)$ simply results from the balance equation at $(K,K)$.
 
 \smallskip
  As for (ii), one must use the balance equations at sites $(k-1,k)$ and $(k, k-1)$ for $1 \le k <K+1$. At  $(k-1,k)$, for example, we have
  \begin{multline*} \big(2 \lambda + \mu _1\1 _{ \{ k \ge 2 \}  } + \mu _2   \big )  \, \pi(k-1,k)  ~=~  2  \lambda\,  p_2 \, \pi(k-1,k-1)  
  ~+~ \mu _1  \, \pi(k,k) \\
  +~ 2 \lambda \,  \pi (k-2,k) \,  \1 _{ \{ k \ge 2 \}  }
  ~+~ \mu _2 \,  \pi(k-1,k+1) \,  \1 _{ \{k<K \}} . 
  \end{multline*}
 Multiplying this equation by $x ^k$ for $x \in \C$ and then summing up over $k$ leads to the second relation of (ii) (valid for  small $|x|$ only, if $K = \infty$, due to the use of Fubini's theorem). This long and tedious calculation is omitted here. We use the following lemma, that is easily derived from relation \eqref{fundamental_ter} of Section \ref{sec2}. Note that Lemma \ref{S_n}  could also be used there to recover the functional equation \eqref{A_K} from  Theorem \ref{pi_K}(ii).

 Finally, (iii) is derived in a similar way as relation \eqref{A(1)}. 
 
  \end{proof}

  \begin{lemma} \label{S_n}  For $i=1,2$, $n \in \N$  and $x \in \C$, define  $S^{(i)}_n(x)$   by
\[S^{(i)}_n(x)=  \sum _{k=0}^{n} x ^k  g _i^{ \ast {(n-k+1)}}  (k).\]

If $x \in \C$ is such that the complex roots $y_i$ and $z_i$ of the polynomial $p_{x,i}$  are not equal, then for $n \in \N$,
\[S^{(i)}_n(x) = - \frac { \mu _i}{\mu _{3-i}}  \,  x \,  \frac{y_i^n -z_i ^n}{y_i - z_i } . \] 
\end{lemma}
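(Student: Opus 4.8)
The plan is to reduce everything to the fundamental two–step recursion \eqref{fundamental_ter} for the convolution powers of $g_i$. First I would fix $i\in\{1,2\}$, abbreviate $\mu=\mu_i$, $\mu'=\mu_{3-i}$, and note that the polynomial $p_{x,i}(Y)=\mu' Y^2-(2\lambda+\mu_1+\mu_2)xY+(\mu+2\lambda x)x$ governing $g_i$ satisfies, by the very definition of $\xi_{i\pm}$ as roots of $\mu' X^2-(2\lambda+\mu_1+\mu_2)X+2\lambda$, an analogue of \eqref{fundamental}: namely $\mu'\,\tau^2 g_i-(2\lambda+\mu_1+\mu_2)\,\tau g_i+2\lambda\, g_i=0$, together with $g_i(0)=0$ and $g_i(1)=-\mu/\mu'$. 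Convolving with $g_i^{\ast m}$ and using \eqref{translation} exactly as in the derivation of \eqref{fundamental_ter} gives, for all $k\in\N$ and $m\ge 0$,
\[
\mu'\,g_i^{\ast(m+1)}(k+2)-(2\lambda+\mu_1+\mu_2)\,g_i^{\ast(m+1)}(k+1)+2\lambda\,g_i^{\ast(m+1)}(k)=-\1_{\{m>0\}}\,g_i^{\ast m}(k+1).
\]

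Next I would set up the induction on $n$. Define $R_n(x):=-\dfrac{\mu}{\mu'}\,x\,\dfrac{y_i^{\,n}-z_i^{\,n}}{y_i-z_i}$, the claimed value of $S^{(i)}_n(x)$. For the base cases, $S^{(i)}_0(x)=g_i^{\ast 1}(0)=g_i(0)=0=R_0(x)$, and $S^{(i)}_1(x)=g_i^{\ast 2}(0)+x\,g_i^{\ast 1}(1)=0+x\,g_i(1)=-\frac{\mu}{\mu'}x=R_1(x)$, using $g_i^{\ast 2}(0)=0$. For the inductive step I would compute $\mu'S^{(i)}_{n+1}(x)$ by splitting the convolution exponent $n+2-k$ as $(m+1)$ with $m=n+1-k$ and applying the three–term relation above with that $m$; after reindexing the sums this expresses $\mu'S^{(i)}_{n+1}(x)$ in terms of $S^{(i)}_n(x)$, $S^{(i)}_{n-1}(x)$, plus boundary terms coming from the endpoints $k=0,1,n,n+1$ of the summation range. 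The point is that $S^{(i)}_{n}$ inherits the same two–step recursion in $n$ as $R_n$, because $y_i,z_i$ are precisely the roots of $p_{x,i}$, i.e. $y_i+z_i=\frac{(2\lambda+\mu_1+\mu_2)x}{\mu'}$ and $y_iz_i=\frac{(\mu+2\lambda x)x}{\mu'}$, so the characteristic equation of the $n$–recursion for $R_n$ is $\mu'\zeta^2-(2\lambda+\mu_1+\mu_2)x\zeta+(\mu+2\lambda x)x=0$. Matching the two base values then forces $S^{(i)}_n=R_n$ for all $n$.

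The main obstacle will be the bookkeeping of the boundary terms in the inductive step: when one shifts indices in $\sum_{k=0}^{n+1}x^k g_i^{\ast(n+2-k)}(k)$ and substitutes the three–term relation, the telescoping is not exact at the ends, and one must check that the leftover terms involving $g_i^{\ast 0}$, $g_i^{\ast 1}$ at arguments $0$ or $1$ either vanish (since $g_i^{\ast 0}(0)=1$ but $g_i^{\ast 0}(j)=0$ for $j\ge1$, and $g_i^{\ast m}(j)=0$ for $j<m$) or reassemble precisely into $(\mu+2\lambda x)x$ times the needed lower–order term. This is the same phenomenon that produces the $\1_{\{l>0\}}$ flags in \eqref{fundamental_ter} and the $g(0)=0$, $g(1)=-1$ identities used repeatedly in Section \ref{sec2}; once those are tracked carefully the identity drops out, and the hypothesis $y_i\ne z_i$ is only needed so that $\frac{y_i^n-z_i^n}{y_i-z_i}$ makes literal sense (otherwise one passes to the confluent limit). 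I would also remark, as the statement already notes, that the same computation run with $\mu_1=\mu_2$, $\lambda=\rho$ recovers the passage from Theorem \ref{pi_K}(ii) to the functional equation \eqref{A_K}, which is a useful consistency check.
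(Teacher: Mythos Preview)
Your approach is essentially the one the paper has in mind: the paper does not spell out a proof but merely says the lemma ``is easily derived from relation \eqref{fundamental_ter}'', and your plan---set up the asymmetric analogue of \eqref{fundamental_ter}, check $n=0,1$, then show that $S^{(i)}_n$ and $R_n$ satisfy the same two-step linear recursion in $n$---is exactly the natural way to carry this out. The bookkeeping you anticipate does work: once one collects the coefficient of $x^k$ in $\mu' S_{n+1}-(2\lambda+\mu_1+\mu_2)x\,S_n+(\mu+2\lambda x)x\,S_{n-1}$, the interior terms cancel by the three-term relation and the boundary terms vanish because $g_i^{\ast m}(j)=0$ for $j<m$.

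One correctable slip: in your displayed analogue of \eqref{fundamental_ter} the right-hand side should carry a factor $\mu_i$, i.e.
\[
\mu'\,g_i^{\ast(m+1)}(k+2)-(2\lambda+\mu_1+\mu_2)\,g_i^{\ast(m+1)}(k+1)+2\lambda\,g_i^{\ast(m+1)}(k)=-\mu\,g_i^{\ast m}(k+1),
\]
since here $g_i(1)=-\mu/\mu'$ (not $-1$ as in the symmetric case), and the convolution argument produces $\mu'\cdot g_i(1)\,\tau\psi=-\mu\,\tau\psi$. With this correction the cancellation in the inductive step goes through verbatim; without it the $\mu\,x\,S_{n-1}$ term would not match and the recursion would fail. (The indicator $\1_{\{m>0\}}$ is harmless but redundant, since $g_i^{\ast 0}(k+1)=0$ for $k\ge 0$.)
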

  
 \begin{remark}  
Reference  \cite{cohen1998analysis}  describes the poles and residues of $A_1(y)$ and $A_2(y)$, but a nice expression as the one in Theorem~\ref{cohen} is missing and constitutes a challenging issue. 
 \end{remark} 
 
\noindent  {\bf Acknowledgement.} The authors are grateful to the editor and referees for helpful discussions and, most particularly, to one referee for invaluable comments regarding the literature.

\end{document}